\documentclass[final,onefignum,onetabnum]{siamart171218}
\usepackage{tikz}
\usepackage{amsmath,amssymb}
\usepackage[mathscr]{euscript}
\usepackage{stmaryrd}
\usepackage{mathtools}
\usepackage{algorithmic}
\Crefname{ALC@unique}{Line}{Lines} % <- Preamble

\title{Multilinear Control Systems Theory\thanks{Submitted to the
    editors May 17, 2019. This is an extended version of a conference paper presented in SIAM CT19.
\funding{This work is supported in part under AFOSR Award No: FA9550-18-1-0028, NSF grant DMS 1613819, Smale Institute, and the Guaranteeing AI Robustness Against Deception (GARD) program from DARPA (I2O).}}}
\author{Can Chen\thanks{Department of Mathematics and Department of Electrical Engineering and Computer Science, University of Michigan,
    Ann Arbor, MI 48109 (\email{canc@umich.edu}).}
\and Amit Surana\thanks{Raytheon Technologies Research Center, East Hartford,
    CT 06108 (\email{amit.surana@rtx.com}).}
\and Anthony Bloch\thanks{Department of Mathematics, University of
    Michigan, Ann Arbor, MI 48109 (\email{abloch@umich.edu}).}
\and Indika Rajapakse\thanks{Department of Computational Medicine \&
    Bioinformatics, Medical School and Department of Mathematics,
    University of Michigan, Ann Arbor, MI 48109 (\email{indikar@umich.edu}).}}

\headers{Multilinear Control Systems Theory}{Can Chen, Amit Surana, Anthony Bloch,
    and Indika Rajapakse}

\begin{document}

\maketitle

\begin{abstract}
In this paper, we provide a system theoretic treatment of a new class of multilinear time-invariant (MLTI) systems in which the states, inputs and outputs are tensors, and the system evolution is governed by multilinear operators. The MLTI system representation is based on the Einstein product and even-order paired tensors. There is a particular tensor unfolding which gives rise to an isomorphism from this tensor space to the general linear group, i.e. the group of invertible matrices. By leveraging this unfolding operation, one can extend classical linear time-invariant (LTI) system notions including stability, reachability and observability to MLTI systems. While the unfolding based formulation is a powerful theoretical construct, the computational advantages of MLTI systems can only be fully realized while working with the tensor form, where hidden patterns/structures can be exploited for efficient representations and computations. Along these lines, we establish new results which enable one to express tensor unfolding based stability, reachability and observability criteria in terms of more standard notions of tensor ranks/decompositions. In addition, we develop a generalized CANDECOMP/PARAFAC decomposition and tensor train decomposition based model reduction framework, which can significantly reduce the number of MLTI system parameters. We demonstrate our framework with numerical examples.

\end{abstract}
\begin{keywords}
multilinear time-invariant systems, stability, reachability, observability, model reduction, tensor unfolding, tensor ranks/decompositions, block tensors
\end{keywords}
\begin{AMS}
 15A69, 93B05, 93B07, 93B40, 93C05, 93D20
\end{AMS}

\section{Introduction}\label{sec:intro}
Controlling high-dimensional systems remains an extremely challenging task as many control strategies do not scale well with the dimension of the systems. Of particular interest in this paper are complex biological and engineering systems in which structure, function and dynamics are highly coupled. Such interactions can be naturally and compactly captured by tensors. Tensors are multidimensional arrays generalized from vectors and matrices, and have wide applications in many domains such as social networks, biology, cognitive science, applied mechanics, scientific computation and signal processing \cite{Chen_2015, DING201875, gel2017tensor, tensorSC, Kolda06multilinearoperators}. For example,  the organization of the interphase nucleus in the human genome reflects a dynamical interaction between 3D genome structure, function and its relationship to phenotype, a concept known as the 4D Nucleome (4DN) \cite{Chen_2015}. 4DN research requires a comprehensive view of genome-wide structure, gene expression, the proteome and phenotype, which fits naturally with a tensorial representation \cite{Rajapakse711,7798500}. In order to apply the standard system and control framework in applications such as these, tensors need to be vectorized, leading to an extremely high-dimensional system representation in which the number of states/parameters scale exponentially with the number of dimensions of the tensors involved \cite{7798500}. Moreover, with the vectorization of tensors, hidden patterns/structures, e.g. redundancy/correlations, can get lost, and thus one cannot exploit such inherent structures for efficient representations and computations.

In order to take advantage of tensor algebraic computations, recently a new class of multilinear time-invariant (MLTI) system has been introduced \cite{rogers_2013,7798500}, in which the states and outputs are preserved as tensors. The system evolution is generated by the action of multilinear operators which are formed using Tucker products of matrices. By using tensor unfolding, an operation that transforms a tensor into a matrix, Rogers et al. \cite{rogers_2013} and Surana et al. \cite{7798500} developed methods for model identification/reduction from tensor time series data. An application of such tensor based representation/identification for skeleton based human behavior recognition from videos demonstrated significant improvements in classification accuracy compared to standard linear time-invariant (LTI) based approaches \cite{DING201875}.  However, the MLTI system representation is limited because it assumes the multilinear operators are formed from the Tucker products of matrices (and thus precludes more general tensorial representations) and does not incorporate control inputs.

The role of tensor algebra has also been explored for modeling and simulation of nonlinear dynamics, where the vector field is a multilinear function of states \cite{KRUPPA20175610}. Tensor decomposition techniques such as CANDECOMP/PARAFAC decomposition (CPD) and tensor train decomposition (TTD) can reduce system size, thus reducing memory usage and enabling efficient computation during simulations. Note that in contrast to the MLTI systems framework of \cite{rogers_2013,7798500}, in this application, tensor algebra is applied to the system represented in the conventional vector form. The author in \cite{gel2017tensor} exploits tensor decompositions to compute numerical solutions of master equations associated with Markov processes on extremely large state spaces.  The Einstein product and even-order paired tensors, along with TTD, were utilized for developing tensor representations for operators based on nearest-neighbor interactions, construction of pseudoinverses for dimensionality reduction methods and the approximation of transfer operators of dynamical systems.

Similarly, using the Einstein product and even-order paired tensors, Chen et. al. \cite{Chen_2018} generalized the notion of MLTI systems introduced in \cite{rogers_2013,7798500} and also incorporated control inputs. The Einstein product is a tensor contraction used quite often in tensor calculus and has profound applications in the study of continuum mechanics and the field of relativity theory \cite{Einstein_2007,Lai_2009}. Moreover, the space of even-order tensors with the Einstein product has many desirable properties. Brazell et al. \cite{doi:10.1137/100804577} discovered that one particular tensor unfolding gives rise to an isomorphism from this tensor space (of even-order tensors equipped with the Einstein  product) to the general linear group, i.e. the group of invertible matrices. This isomorphism enables one to define matrix equivalent concepts for tensors including tensor inverse, positive definiteness and eigenvalue decomposition. Using these tensor constructs, Chen et. al. \cite{Chen_2018} developed tensor algebraic conditions for stability, reachability and observability for generalized input/output MLTI systems. A new notion of block tensors was also introduced which enables one to express these conditions in a  compact fashion. Interestingly, these conditions look analogous to the classical conditions for stability, reachability and observability in LTI systems, and reduce to them as a special case.

This paper is an extended version of the introductory paper \cite{Chen_2018}, and in addition to providing various technical details, we also present several new results. The key contributions of this paper are as follows:

\begin{enumerate}
  \item In \cite{Chen_2018}, the reachability and observability conditions for MLTI systems were stated in terms of the unfolding rank which requires matricization of the reachability/observability tensors. Here we establish new results relating the unfolding rank to other more standard notions of tensor ranks, including multilinear ranks, CP rank and TT-ranks. Using such relations, we provide criteria for reachability and observability which do not require tensor unfolding, and can be computed using efficient tensor algebraic methods. Similarly, we express MLTI system stability conditions using higher-order singular value decomposition (HOSVD), CPD and TTD.
  \item Using generalized CPD/TTD, we develop a framework for model reduction of MLTI systems. This approach takes advantage of tensor decompositions which otherwise cannot be exploited after unfolding the MLTI systems to obtain a standard LTI form. It also successfully realizes the tensor decomposition based criteria for stability, reachability and observability. Furthermore, we establish new stability results by utilizing the factor matrices from tensor decompositions for this reduced model with lesser computational costs.
  \item We provide computational and memory complexity analysis for the CPD and TTD based methods in comparison to unfolding based matrix methods and demonstrate our framework in four numerical examples.
\end{enumerate}

The paper is organized into nine sections. In \cref{algebra}, we review tensor preliminaries including various notions of tensor products, tensor unfolding and properties of even-order paired tensors. \Cref{sec:3} introduces the MLTI system representation using the Einstein product and even-order paired tensors in detail. In \cref{sec:4}, we discuss notions of block tensors and tensor ranks/decompositions. We also build new results relating the unfolding rank of a tensor to other more standard notions of tensor ranks. We establish stability, reachability and observability conditions for MLTI systems in \cref{sec:5}. The application of generalized CPD/TTD for model reduction is discussed in \cref{sec:6}. Four numerical examples are presented in \cref{sec:7}. Finally, we summarize different numerical approaches associated with MLTI systems in \cref{sec:8} and conclude in \cref{sec:9} with future research directions.

\section{Tensor preliminaries}\label{algebra}
We take most of the concepts and notations for tensor algebra from the comprehensive works of Kolda et al. \cite{doi:10.1137/07070111X, Kolda06multilinearoperators} and Ragnarsson et al. \cite{doi:10.1137/110820609, RAGNARSSON2013853}. A tensor is a multidimensional array. The order of a tensor is the number of its dimensions, and each dimension is called a mode. An $N$-th order tensor usually is denoted by $\textsf{X}\in \mathbb{R}^{J_1\times J_2\times  \dots \times J_N}$. The sets of indexed indices and size of \textsf{X} are denoted by $\textbf{j}=\{j_1,j_2,\dots,j_N\}$ and $\mathcal{J}=\{J_1,J_2,\dots,J_N\}$, respectively. Let $\Pi_{\mathcal{J}}$ represent the product of all elements in $\mathcal{J}$, i.e. $\Pi_\mathcal{J} = \prod_{n=1}^NJ_n$. It is therefore reasonable to consider scalars $x\in\mathbb{R}$ as zero-order tensors, vectors $\textbf{v}\in\mathbb{R}^{J}$ as first-order tensors, and matrices $\textbf{A}\in\mathbb{R}^{J\times I}$ as second-order tensors.

\subsection{Tensor products}  By extending the notion of vector outer product, the outer product of two tensors $\textsf{X}\in \mathbb{R}^{J_1\times J_2\times \dots \times J_N}$ and $\textsf{Y}\in \mathbb{R}^{I_1\times I_2\times \dots \times I_M}$ is defined as
\begin{equation*}
(\textsf{X}\circ \textsf{Y})_{j_1j_2\dots j_Ni_1i_2\dots i_M}=\textsf{X}_{j_1j_2\dots j_N}\textsf{Y}_{i_1i_2\dots i_M}. 
\end{equation*}
In contrast, the inner product of two tensors $\textsf{X},\textsf{Y}\in \mathbb{R}^{J_1\times J_2\times \dots \times J_N}$ is defined as
\begin{equation*}
\langle \textsf{X},\textsf{Y}\rangle =\sum_{j_1=1}^{J_1}\dots \sum_{j_N=1}^{J_N}\textsf{X}_{j_1j_2\dots j_N}\textsf{Y}_{j_1j_2\dots j_N},
\end{equation*}
leading to the tensor Frobenius norm $\|\textsf{X}\|^2=\langle \textsf{X},\textsf{X}\rangle$. We say two tensors $\textsf{X}$ and $\textsf{Y}$ are orthogonal if the inner product $\langle \textsf{X},\textsf{Y}\rangle =0$. The matrix tensor multiplication $\textsf{X} \times_{n} \textbf{A}$ along mode $n$ for a matrix $\textbf{A}\in  \mathbb{R}^{I\times J_n}$ is defined by
$
(\textsf{X} \times_{n} \textbf{A})_{j_1j_2\dots j_{n-1}ij_{n+1}\dots j_N}=\sum_{j_n=1}^{J_n}\textsf{X}_{j_1j_2\dots j_n\dots j_N}\textbf{A}_{ij_n}.
$
This product can be generalized to what is known as the Tucker product, for $\textbf{A}_n\in \mathbb{R}^{I_n\times J_n}$,
\begin{equation}\label{eq5}
\begin{split}
\textsf{X}\times_1 \textbf{A}_1 \times_2\textbf{A}_2\times_3\dots \times_{N}\textbf{A}_N=\textsf{X}\times\{\textbf{A}_1,\textbf{A}_2,\dots,\textbf{A}_N\}\in  \mathbb{R}^{I_1\times I_2\times\dots \times I_N}.
\end{split}
\end{equation}

\subsection{Tensor unfolding}Tensor unfolding is considered as a critical operation in tensor computations \cite{doi:10.1137/07070111X, Kolda06multilinearoperators,doi:10.1137/110820609}. In order to unfold a tensor $\textsf{X}\in\mathbb{R}^{J_1\times J_2\times\dots \times J_N}$ into a vector or a matrix, we use an index mapping function $ivec(\cdot,\mathcal{J}):\mathbb{Z}^+\times \mathbb{Z}^+\times \stackrel{\scriptscriptstyle N}{\cdots} \times \mathbb{Z}^+ \rightarrow \mathbb{Z}^+$ as defined by Ragnarsson et al. \cite{doi:10.1137/110820609, RAGNARSSON2013853}, which is given by
\begin{equation*}
ivec(\textbf{j},\mathcal{J}) = j_1+\sum_{k=2}^N(j_k-1)\prod_{l=1}^{k-1}J_l.
\end{equation*}
The index mapping function $ivec$ returns the index for tensor vectorization, i.e. $\textbf{x}\in\mathbb{R}^{\Pi_{\mathcal{J}}}$ is the vectorization of \textsf{X} such that $\textbf{x}_{ivec(\textbf{j},\mathcal{J})}=\textsf{X}_{j_1j_2\dots j_N}$. If $N=2$,  $ivec$ will stack all the columns of \textsf{X}.

\begingroup
\setlength\arraycolsep{2pt}
For tensor matricization, let $z$ be an integer such that $1\leq z < N$, and $\mathbb{S}$ be a  permutation of $\{1,2,\dots,N\}$. If $\textbf{r} = \{\mathbb{S}(1),\mathbb{S}(2),\dots,\mathbb{S}(z)\}$ and $\textbf{c} = \{\mathbb{S}(z+1),\mathbb{S}(z+2),\dots,\mathbb{S}(N)\}$ with  $\mathcal{P} = \{J_{\mathbb{S}(1)},J_{\mathbb{S}(2)},\dots,J_{\mathbb{S}(z)}\}$ and $\mathcal{Q} = \{J_{\mathbb{S}(z+1)},J_{\mathbb{S}(z+2)},\dots,J_{\mathbb{S}(N)}\}$, respectively, the $\textbf{r}\textbf{c}$-unfolding matrix of $\textsf{X}$, denoted by $\textbf{X}_{(\textbf{rc})}\in\mathbb{R}^{\Pi_{\mathcal{P}}\times \Pi_{\mathcal{Q}}}$, is given by
\begin{equation}
(\textbf{X}_{(\textbf{rc})})_{pq} = \textsf{X}^{\mathbb{S}}_{p_1p_2\dots p_zq_1q_2\dots q_{N-z}},\label{eq20}
\end{equation}
where, $p = ivec(\textbf{p},\mathcal{P})$, $q = ivec(\textbf{q},\mathcal{Q})$, and $\textsf{X}^{\mathbb{S}}$ is the $\mathbb{S}$-transpose of $\textsf{X}$ defined as
 \begin{equation*}\textsf{X}^{\mathbb{S}}_{j_{\mathbb{S}(1)}j_{\mathbb{S}(2)}\dots j_{\mathbb{S}(N)}}=\textsf{X}_{j_1j_2\dots j_N}.
 \end{equation*}
When $z=1$ and $\mathbb{S} = \footnotesize{\begin{pmatrix} 1 & 2 & \dots& n & n+1 & \dots & N\\  n & 1 & \dots & n-1 & n+1 & \dots & N\end{pmatrix}}$, the tensor unfolding is called the $n$-mode matricization, denoted by $\textbf{X}_{(n)}$.
\endgroup

\subsection{Even-order paired tensors} Here we discuss the notion of even-order paired tensors and the Einstein product which will play an important role in developing the MLTI systems theory. 

\begin{definition}
Even-order paired tensors are $2N$-th order tensors with elements specified using a pairwise index notation, i.e. $\textsf{A}_{j_1i_1\dots j_Ni_N}$ for $\textsf{A}\in\mathbb{R}^{J_1\times I_1\times \dots \times J_N\times I_N}$.
\end{definition}

\begin{definition}
Given an even-order paired tensor $\textsf{A}\in\mathbb{R}^{J_1\times I_1\times \dots \times J_N\times I_N}$, the Einstein product between \textsf{A} and an $N$-th order tensor $\textsf{X}\in\mathbb{R}^{I_1\times I_2\times \dots \times I_N}$ is the contraction along the second index in each pair from \textsf{A}, i.e.
\begin{equation}\label{eq:0}
    (\textsf{A}* \textsf{X})_{j_1j_2\dots j_N} = \sum_{i_1=1}^{I_1}\dots\sum_{i_N=1}^{I_N} \textsf{A}_{j_1i_1\dots j_Ni_N}\textsf{X}_{i_1i_2\dots i_N}.
\end{equation}
\end{definition}

We use the pairwise index notation for even-order tensors because it is convenient for defining the unfolding transformation $\varphi$, see \cref{def:1}, and for representing core matrices/tensors in tensor decompositions, see \cref{sec:6.1}. Note that even-order paired tensors and the Einstein product (\ref{eq:0}) can be viewed as multidimensional generalizations of matrices and the standard matrix-vector product, respectively \cite{gel2017tensor}. Similar to the standard matrix-matrix product, one can also define a generalized form of the Einstein product between two even-order paired tensors. We will see later that the Einstein product can be efficiently computed using tensor decompositions of even-order paired tensors, see \cref{pro:2.13}.

\begin{definition}
Given two even-order paired tensors $\textsf{A}\in\mathbb{R}^{J_1\times K_1\times \dots J_N\times K_N}$ and $\textsf{B}\in\mathbb{R}^{K_1\times I_1 \times \dots \times K_N\times I_N}$, the Einstein product $\textsf{A}*\textsf{B}\in \mathbb{R}^{J_1\times I_1 \times \dots \times J_N\times I_N}$ is defined by
\begin{equation}\label{eq10}
(\textsf{A}* \textsf{B})_{j_1i_1\dots j_N i_N}=\sum_{k_1=1}^{K_1}\dots\sum_{k_N=1}^{K_N} \textsf{A}_{j_1k_1\dots j_Nk_N}\textsf{B}_{k_1i_1\dots k_Ni_N}.
\end{equation}
\end{definition}

Brazell et al. \cite{doi:10.1137/100804577} investigated properties of even-order tensors under the Einstein product (different from (\ref{eq10})) through construction of an isomorphism to GL($n,\mathbb{R}$), i.e. the set of $n\times n$ real valued invertible matrices. The existence of the isomorphism enables one to generalize several matrix concepts, such as invertibility and eigenvalue decomposition to the tensor case \cite{doi:10.1137/100804577, doi:10.1080/03081087.2015.1071311, He_2017, doi:10.1080/03081087.2018.1500993, doi:10.1080/03081087.2015.1083933}. We can establish an analogous isomorphism  for even-order paired tensors by a permutation of indices.
\begin{definition}\label{def:1}
Define the map $\varphi$: $\mathbb{T}_{J_1I_1\dots J_NI_N}(\mathbb{R}) \rightarrow \mathbb{M}_{\Pi_{\mathcal{J}}\Pi_{\mathcal{I}}}(\mathbb{R})$ with $\varphi(\textsf{A})=\textbf{A}$ defined component-wise as
\begin{equation}\label{eq:12}
\textsf{A}_{j_1i_1\dots j_Ni_N}\xrightarrow{\varphi} \textbf{A}_{ivec(\textbf{j},\mathcal{J})ivec(\textbf{i},\mathcal{I})},
\end{equation}
where, $\mathbb{T}_{J_1I_1\dots J_NI_N}(\mathbb{R})$ is the set of all real $J_1\times I_1\times \dots \times J_N\times I_N$ even-order paired tensors, and $\mathbb{M}_{\Pi_{\mathcal{J}}\Pi_{\mathcal{I}}}(\mathbb{R})$ is set of all real $\Pi_{\mathcal{J}}\times \Pi_{\mathcal{I}}$ matrices.
\end{definition}

\begingroup
\setlength\arraycolsep{2pt}
The map $\varphi$ can be viewed as a tensor unfolding discussed in (\ref{eq20}) with $z= N$ and $\mathbb{S} =\footnotesize{\begin{pmatrix} 1 & 2 & \dots & N  & N+1 & N+2 & \dots & 2N\\1 & 3 & \dots & 2N-1 & 2 & 4 & \dots & 2N\end{pmatrix}}$, so the Frobenius norm is preserved through $\varphi$, i.e. $\|\textsf{A}\| = \|\varphi(\textsf{A})\|$. More significantly, $\varphi$ is bijective, and the restriction of $\varphi^{-1}$ on the general linear group produces a group isomorphism.
\endgroup

\begin{corollary}
Let $J_n=I_n$ for all $n$ and $\mathbb{G}_{J_1J_1\dots J_NJ_N}(\mathbb{R})=\varphi^{-1}(\text{GL}(\Pi_{\mathcal{J}},\mathbb{R}))$, i.e. $\mathbb{G}_{J_1J_1\dots J_NJ_N}$ is the space of all even-order paired tensors which maps to the general linear group under $\varphi$. Then $\mathbb{G}_{J_1J_1\dots J_NJ_N}(\mathbb{R})$ is a group equipped with the Einstein product (\ref{eq10}), and $\varphi$ is a group isomorphism.
\end{corollary}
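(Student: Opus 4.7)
The plan is to reduce the corollary to the single identity
\[
\varphi(\mathsf{A}*\mathsf{B}) \;=\; \varphi(\mathsf{A})\,\varphi(\mathsf{B})
\]
for all $\mathsf{A},\mathsf{B}\in\mathbb{T}_{J_1J_1\dots J_NJ_N}(\mathbb{R})$, i.e.\ that $\varphi$ intertwines the Einstein product with ordinary matrix multiplication. Once this is in hand, everything else follows formally: the group structure on $\text{GL}(\Pi_{\mathcal{J}},\mathbb{R})$ pulls back through the bijection $\varphi^{-1}$ (established just before the statement) to a group structure on $\mathbb{G}_{J_1J_1\dots J_NJ_N}(\mathbb{R})$, and that group structure is exactly the Einstein product by the homomorphism identity.

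The first step is to verify this identity directly from the definitions. Writing out the $(p,q)$ entry of $\varphi(\mathsf{A}*\mathsf{B})$ with $p=ivec(\mathbf{j},\mathcal{J})$ and $q=ivec(\mathbf{i},\mathcal{J})$, we get, by \eqref{eq:12} and \eqref{eq10},
\[
\varphi(\mathsf{A}*\mathsf{B})_{pq} \;=\; \sum_{k_1=1}^{J_1}\cdots\sum_{k_N=1}^{J_N} \mathsf{A}_{j_1k_1\dots j_Nk_N}\,\mathsf{B}_{k_1i_1\dots k_Ni_N}.
\]
On the other hand, matrix multiplication gives
\[
\bigl(\varphi(\mathsf{A})\varphi(\mathsf{B})\bigr)_{pq} \;=\; \sum_{r=1}^{\Pi_{\mathcal{J}}} \varphi(\mathsf{A})_{pr}\,\varphi(\mathsf{B})_{rq}.
\]
The key observation is that $ivec(\cdot,\mathcal{J})$ is a bijection from $\{1,\dots,J_1\}\times\cdots\times\{1,\dots,J_N\}$ onto $\{1,\dots,\Pi_{\mathcal{J}}\}$, so the single sum over $r$ can be reindexed as an iterated sum over $\mathbf{k}=(k_1,\dots,k_N)$ via $r=ivec(\mathbf{k},\mathcal{J})$. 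Substituting $\varphi(\mathsf{A})_{pr}=\mathsf{A}_{j_1k_1\dots j_Nk_N}$ and $\varphi(\mathsf{B})_{rq}=\mathsf{B}_{k_1i_1\dots k_Ni_N}$ matches the two expressions term by term.

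With the homomorphism identity secured, the remainder is essentially bookkeeping. Define $\mathsf{I}\in\mathbb{T}_{J_1J_1\dots J_NJ_N}(\mathbb{R})$ as $\varphi^{-1}(I_{\Pi_{\mathcal{J}}})$; then $\mathsf{I}*\mathsf{A}=\mathsf{A}*\mathsf{I}=\mathsf{A}$ by applying $\varphi^{-1}$ to the corresponding matrix identities. For $\mathsf{A}\in\mathbb{G}_{J_1J_1\dots J_NJ_N}(\mathbb{R})$, the matrix $\varphi(\mathsf{A})$ is invertible, so we may set $\mathsf{A}^{-1}:=\varphi^{-1}(\varphi(\mathsf{A})^{-1})$; the homomorphism identity then yields $\mathsf{A}*\mathsf{A}^{-1}=\mathsf{A}^{-1}*\mathsf{A}=\mathsf{I}$, and in particular $\mathsf{A}^{-1}\in\mathbb{G}_{J_1J_1\dots J_NJ_N}(\mathbb{R})$. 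Associativity of $*$ also follows from associativity of matrix multiplication via the identity. This shows $\mathbb{G}_{J_1J_1\dots J_NJ_N}(\mathbb{R})$ is a group under $*$, and $\varphi$ restricted to it is a bijective homomorphism onto $\text{GL}(\Pi_{\mathcal{J}},\mathbb{R})$, hence a group isomorphism.

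The only subtle point, and hence the main obstacle, is the index bookkeeping in the homomorphism identity: one must be sure that the contraction order in the Einstein product (contracting $k_n$ with the $n$-th paired index of each factor) corresponds exactly to the index ordering imposed on the unfolded matrix by $ivec$ with shape vector $\mathcal{J}$. Everything else is routine transfer of structure through a bijection.
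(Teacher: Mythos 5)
Your proposal is correct: the homomorphism identity $\varphi(\mathsf{A}*\mathsf{B})=\varphi(\mathsf{A})\varphi(\mathsf{B})$, verified by reindexing the contraction sum through the bijection $ivec(\cdot,\mathcal{J})$, is exactly the content needed, and the transfer of identity, inverses, closure and associativity through the bijection $\varphi$ is the standard argument. The paper itself omits the proof and defers to its references (which treat a consecutive-index convention, remarking that the paired case follows by an index permutation), so your direct verification for the paired convention is essentially the same argument made self-contained.
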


Detailed proofs can be found in \cite{doi:10.1137/100804577,He_2017}. Based on the unfolding property, we can define several tensor notations analogous to matrices as follows: 
\begin{enumerate}
  \item For an even-order paired tensor $\textsf{A} \in \mathbb{R}^{J_1\times I_1 \times \dots \times J_N\times I_N}$,  $\textsf{T} \in \mathbb{R}^{I_1\times J_1 \times \dots \times I_N\times J_N}$ is called the U-transpose of $\textsf{A}$ if $\textsf{T}_{i_1j_1\dots i_Nj_N} = \textsf{A}_{j_1i_1\dots j_Ni_N}$, and is denoted by $\textsf{A}^{\top}$. We refer to an even-order paired tensor that is identical to its U-transpose as weakly symmetric.
  \item For an even-order paired tensor $\textsf{A} \in \mathbb{R}^{J_1\times I_1 \times \dots \times J_N\times I_N}$, the unfolding rank of $\textsf{A}$ is defined as $\text{rank}_U(\textsf{A}) = \text{rank}(\varphi(\textsf{A}))$ \cite{doi:10.1080/03081087.2018.1500993}.
  \item An even-order ``square'' tensor $\textsf{D} \in \mathbb{R}^{J_1\times J_1 \times \dots \times J_N\times J_N}$ is called the U-diagonal tensor if all its entries are zeros except for $\textsf{D}_{j_1j_1\dots j_Nj_N}$. If all the diagonal entires $\textsf{D}_{j_1j_1\dots j_Nj_N}=1$, then $\textsf{D}$ is the U-identity tensor, denoted by $\textsf{I}$.
  \item  For an even-order square tensor $\textsf{A}\in \mathbb{R}^{J_1\times J_1 \times \dots \times J_N\times J_N}$, if there exists a tensor $\textsf{B}\in\mathbb{R}^{J_1\times J_1 \times \dots \times J_N\times J_N}$ such that $\textsf{A}*\textsf{B}=\textsf{B}*\textsf{A} = \textsf{I}$, then $\textsf{B}$ is called the U-inverse of $\textsf{A}$, denoted by $\textsf{A}^{-1}$.
      \item An even-order square tensor $\textsf{A}\in \mathbb{R}^{J_1\times J_1 \times \dots \times J_N\times J_N}$ is called U-positive definite if $\textsf{X}^{\top}*\textsf{A}*\textsf{X} > 0$ for any nonzero tensor $\textsf{X}\in \mathbb{R}^{J_1\times J_2\times\dots \times J_N}$.
  \item For an even-order square tensor $\textsf{A}\in \mathbb{R}^{J_1\times J_1 \times \dots \times J_N\times J_N}$, the unfolding determinant of $\textsf{A}$ is defined as $\text{det}_U(\textsf{A}) = \text{det}(\varphi(\textsf{A}))$ \cite{doi:10.1080/03081087.2018.1500993}.
\end{enumerate}
In \cref{app:1}, we show that the notion of U-positive definiteness is a generalization of M-positive definiteness and rank-one positive definiteness proposed in \cite{Huang_2017, Liqun2009ConditionsFS} for the even-order elasticity tensors.

\section{MLTI system representation}\label{sec:3}
To describe the evolution of tensor time series, the authors in \cite{rogers_2013,7798500} introduced a MLTI system using the Tucker product, which can be generalized by incorporating control inputs as follows:
\begin{align}
\begin{cases}
\textsf{X}_{t+1}=\textsf{X}_{t}\times\{\textbf{A}_1,\dots,\textbf{A}_N\}+\textsf{U}_t\times\{\textbf{B}_1,\dots,\textbf{B}_N\}\\
\textsf{Y}_{t}=\textsf{X}_{t}\times\{\textbf{C}_1,\dots,\textbf{C}_N\}
\end{cases},\label{eq9}
\end{align}
where, $\textsf{X}_{t}\in\mathbb{R}^{J_1\times J_2\times \dots \times J_N}$ is the latent state space tensor, $\textsf{Y}_{t}\in\mathbb{R}^{I_1\times I_2\times \dots \times I_N}$ is the output tensor, and $\textsf{U}_t\in \mathbb{R}^{K_1\times K_2\times \dots \times K_N}$ is a control tensor. $\textbf{A}_n\in \mathbb{R}^{J_n\times J_n}$, $\textbf{B}_n\in\mathbb{R}^{J_n\times K_n}$ and $\textbf{C}_n\in \mathbb{R}^{I_n\times J_n}$ are real valued matrices for $n=1,2,\dots,N$. The Tucker product provides a suitable way to deal with MLTI systems because it allows one to exploit matrix computations. However, we find that (\ref{eq9}) can be replaced by a more general representation using the notion of even-order paired tensors and the Einstein product. Moreover, the new representation is more concise and systematic compared to the tensor based linear system proposed in \cite{DING201875}.
\begin{definition}
A more general representation of MLTI system is given by
\begin{align}
\begin{cases}
\textsf{X}_{t+1}=\textsf{A}*\textsf{X}_{t}+\textsf{B}*\textsf{U}_{t}\\
\textsf{Y}_{t}=\textsf{C}*\textsf{X}_{t}
\end{cases},\label{eq11}
\end{align}
where, $\textsf{A}\in\mathbb{R}^{J_1\times J_1\times \dots \times J_N\times J_N}$, $\textsf{B}\in\mathbb{R}^{J_1\times K_1\times \dots \times J_N\times K_N}$ and $\textsf{C}\in\mathbb{R}^{I_1\times J_1\times \dots \times I_N\times J_N}$ are even-order paired tensors.
\end{definition}

\begin{lemma}\label{lem:2.9}
Let $\textsf{A}\in \mathbb{R}^{J_1\times I_1 \times \dots \times J_N\times I_N}$ be an even-order paired tensor. Then the product $\textsf{A}\times \{\textbf{U}_1,\textbf{V}_1,\dots, \textbf{U}_N,\textbf{V}_N\} = \textsf{U}*\textsf{A}*\textsf{V}^\top\in\mathbb{R}^{K_1\times L_1\times \dots \times K_N\times L_N}$ for $\textsf{U} = \textbf{U}_1\circ\textbf{U}_2\circ\dots \circ \textbf{U}_N$ and $\textsf{V} = \textbf{V}_1\circ\textbf{V}_2\circ\dots \circ \textbf{V}_N$ where $\textbf{U}_n\in\mathbb{R}^{K_n\times J_n}$ and  $\textbf{V}_n\in\mathbb{R}^{L_n\times I_n}$.
\end{lemma}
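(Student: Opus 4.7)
The plan is to prove the identity by writing both sides componentwise and observing that the outer-product structure of $\textsf{U}$ and $\textsf{V}$ causes the single Einstein contraction to decouple into $N$ independent mode-wise matrix multiplications, which is exactly the definition of the Tucker product.

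First I would unpack the tensor $\textsf{U}=\textbf{U}_1\circ\dots\circ\textbf{U}_N$ using the outer product definition from the preliminaries, which gives the entrywise factorization
\begin{equation*}
\textsf{U}_{k_1 j_1 \dots k_N j_N}=(\textbf{U}_1)_{k_1 j_1}(\textbf{U}_2)_{k_2 j_2}\cdots (\textbf{U}_N)_{k_N j_N},
\end{equation*}
and analogously $\textsf{V}_{l_1 i_1 \dots l_N i_N}=\prod_n (\textbf{V}_n)_{l_n i_n}$, so $(\textsf{V}^{\top})_{i_1 l_1 \dots i_N l_N}=\prod_n (\textbf{V}_n)_{l_n i_n}$ by the U-transpose definition.

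Next I would apply the Einstein product formula (\ref{eq10}) twice. Computing $\textsf{U}*\textsf{A}$ contracts the second index of each pair of $\textsf{U}$ (namely $j_1,\dots,j_N$) against the first index of each pair of $\textsf{A}$, and then contracting $(\textsf{U}*\textsf{A})*\textsf{V}^{\top}$ contracts the $i_n$ indices. Substituting the factorizations above, both summations produce
\begin{equation*}
(\textsf{U}*\textsf{A}*\textsf{V}^{\top})_{k_1 l_1 \dots k_N l_N}
=\sum_{\textbf{j},\textbf{i}}\Bigl(\prod_{n=1}^N (\textbf{U}_n)_{k_n j_n}(\textbf{V}_n)_{l_n i_n}\Bigr)\textsf{A}_{j_1 i_1\dots j_N i_N}.
\end{equation*}

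Finally I would expand the right-hand side using the Tucker product (\ref{eq5}) applied to the $2N$-mode interpretation of $\textsf{A}$, with the matrix $\textbf{U}_n$ acting along mode $2n-1$ and $\textbf{V}_n$ along mode $2n$. Unwinding the definition mode by mode yields exactly the same expression, completing the proof; the output shape $K_1\times L_1\times\dots\times K_N\times L_N$ falls out of the dimensions of the factor matrices.

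The only obstacle is really notational bookkeeping: one must be careful that the Einstein product contracts the second index of each pair (so it pairs $\textsf{U}$'s $j_n$ with $\textsf{A}$'s first index, and $\textsf{A}$'s second index $i_n$ with $\textsf{V}^{\top}$'s first index), and that the outer-product construction forces the contraction weights to factor across modes. No deeper ideas beyond these index manipulations are required, since the result is essentially a restatement of the fact that a Tucker product by rank-one factor matrices coincides with an Einstein sandwich by the corresponding separable even-order tensors.
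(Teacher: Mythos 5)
Your proposal is correct and follows the same route as the paper, which simply asserts that the identity "follows from the definitions of the Tucker and Einstein products"; you have just written out the componentwise bookkeeping that the authors leave implicit. The index checks all work out: the outer-product factorizations of $\textsf{U}$ and $\textsf{V}^\top$, the two Einstein contractions over $j_n$ and $i_n$, and the resulting double sum indeed coincide with the Tucker product expression and yield the stated output shape.
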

\begin{proof}
This follows from the definitions of the Tucker and Einstein products.
\end{proof}

\begin{proposition}\label{pro:3.2}
The governing equations (\ref{eq11}) can be obtained from (\ref{eq9}) by setting $\textsf{A}$, $\textsf{B}$ and $\textsf{C}$ to be the outer products of component matrices $\{\textbf{A}_1,\textbf{A}_2,\dots,\textbf{A}_N\}$, $\{\textbf{B}_1,\textbf{B}_2,\dots,\textbf{B}_N\}$ and $\{\textbf{C}_1,\textbf{C}_2,\dots,\textbf{C}_N\}$, respectively.
\end{proposition}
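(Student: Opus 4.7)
The plan is to reduce the three tensor equations in (\ref{eq11}) to the three tensor equations in (\ref{eq9}) by verifying, component-wise, a single identity: if $\textsf{A}\in\mathbb{R}^{J_1\times I_1\times\dots\times J_N\times I_N}$ is set to be the outer product $\textsf{A}=\textbf{A}_1\circ\textbf{A}_2\circ\dots\circ\textbf{A}_N$ of matrices $\textbf{A}_n\in\mathbb{R}^{J_n\times I_n}$, and $\textsf{X}\in\mathbb{R}^{I_1\times\dots\times I_N}$ is arbitrary, then
\begin{equation*}
\textsf{A}*\textsf{X} \;=\; \textsf{X}\times\{\textbf{A}_1,\textbf{A}_2,\dots,\textbf{A}_N\}.
\end{equation*}
Once this identity is in hand, applying it three times (with the roles of $\textsf{A}$ played by $\textsf{A},\textsf{B},\textsf{C}$ and the roles of $\textsf{X}$ played by $\textsf{X}_t,\textsf{U}_t,\textsf{X}_t$) immediately converts (\ref{eq11}) into (\ref{eq9}).

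First I would unpack both sides in coordinates. The outer product definition in the paper stacks indices in the order of the factors, so $\textbf{A}_1\circ\dots\circ\textbf{A}_N$ naturally produces a tensor whose indices appear in the interleaved pairwise pattern $(j_1,i_1,j_2,i_2,\dots,j_N,i_N)$ required by the even-order paired convention, with entries $(\textbf{A}_1)_{j_1 i_1}(\textbf{A}_2)_{j_2 i_2}\cdots(\textbf{A}_N)_{j_N i_N}$. Substituting into the Einstein product (\ref{eq:0}),
\begin{equation*}
(\textsf{A}*\textsf{X})_{j_1\dots j_N}=\sum_{i_1,\dots,i_N}(\textbf{A}_1)_{j_1 i_1}\cdots(\textbf{A}_N)_{j_N i_N}\,\textsf{X}_{i_1\dots i_N},
\end{equation*}
which, by iterating the definition of the mode-$n$ product and the Tucker product (\ref{eq5}), is exactly $(\textsf{X}\times\{\textbf{A}_1,\dots,\textbf{A}_N\})_{j_1\dots j_N}$. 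This establishes the identity.

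The only real place to be careful is the bookkeeping of indices: one must check that the outer product of matrices, as defined by index-stacking in \cref{algebra}, yields precisely the interleaved pairwise indexing that the Einstein product contracts against. That verification is inductive in $N$ (for $N=2$ it is immediate, and the inductive step just appends one more pair $(j_{N+1},i_{N+1})$), and once it is noted, the rest of the argument is a line of algebra. In fact the identity can also be read off as the ``one-sided'' specialization of \cref{lem:2.9}, obtained by taking each $\textbf{V}_n$ to be a suitable identity-like factor so that the right-hand side collapses to $\textsf{U}*\textsf{A}$; I would mention this in passing but prefer the direct coordinate calculation because it is self-contained. Applying the identity to the state update, control term, and output equation of (\ref{eq9}) then yields (\ref{eq11}) with $\textsf{A},\textsf{B},\textsf{C}$ as stated, completing the proof.
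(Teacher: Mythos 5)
Your proof is correct and is essentially the paper's argument: the paper's one-line proof is exactly the specialization of \cref{lem:2.9} with $I_n=1$ and $\textbf{V}_n=1$ that you mention in passing, and your direct coordinate computation is precisely the content of that lemma's (also one-line) proof. The index bookkeeping you flag — that the outer product of the matrices $\textbf{A}_n$ lands in the interleaved pairwise pattern $(j_1,i_1,\dots,j_N,i_N)$ — is handled correctly and is the only nontrivial point.
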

\begin{proof}
The result follows from \cref{lem:2.9} with $I_n=1$ and $\textbf{V}_n=1$ for all $n$.
\end{proof}

The main advantages of the MLTI system (\ref{eq11}) are as follows: 
\begin{enumerate}
\item The Einstein product representation (\ref{eq11}) of MLTI systems is indeed the generalization of (\ref{eq9}). While \cref{pro:3.2} shows that MLTI systems in the form of (\ref{eq9}) can always be transformed into the form of (\ref{eq11}), the converse is not always true, see  (\ref{eq45}) for example. It is true only when $R_1=R_2=R_3=1$.
\item  The MLTI system (\ref{eq11}) takes a form  similar to the standard LTI system model with matrix product replaced with the Einstein product, so the representation is more natural for developing the MLTI systems theory including notions of stability, reachability and observability. Moreover, the concept of transfer functions which is commonly used in modern control theory can be extended for MLTI systems, see \cref{def:3.4}. 
\item We can exploit tensor decompositions (see \cref{sec:4.2}) of the even-order paired tensors \textsf{A}, \textsf{B} and \textsf{C} to accelerate computations in MLTI systems theory. In particular, if \textsf{A}, \textsf{B} and \textsf{C} possess low tensor rank structures, we can obtain a low-parameter MLTI representation. In addition, many operations such as the Einstein product and unfolding rank can be achieved efficiently in the tensor decomposition format compared to unfolding based matrix methods, see remarks in \cref{sec:5,sec:6}.
\item Traditional LTI model reduction and identification techniques such as balanced truncation and eigensystem realization algorithm can be extended using the form of  (\ref{eq11}). 
\end{enumerate}

\begin{definition}\label{def:3.4}
The transfer function $\textsf{G}(z)$ of (\ref{eq11}) is given by
\begin{equation}\label{eq:3.20}
\textsf{G}(z) = \textsf{C}*(z\textsf{I}-\textsf{A})^{-1}*\textsf{B},
\end{equation}
where, $z$ is a complex variable.
\end{definition}

We first investigate the elementary solution to the MLTI system (\ref{eq11}), which is crucial in the analysis of stability, reachability and observability.
\begin{proposition}\label{pro9}
For an unforced MLTI system
$
\textsf{X}_{t+1}=\textsf{A}*\textsf{X}_t,
$
the solution for $\textsf{X}$ at time $k$, given initial condition $\textsf{X}_0$, is $\textsf{X}_k=\textsf{A}^{k}* \textsf{X}_0$ where $\textsf{A}^{k}=\textsf{A}*\textsf{A}*\stackrel{ k}{\cdots}*\textsf{A}$.
\end{proposition}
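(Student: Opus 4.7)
The plan is to prove this by straightforward induction on $k$, leveraging the associativity of the Einstein product which follows from the isomorphism $\varphi$ established earlier. First I would dispose of the base cases: for $k=0$ one needs $\textsf{X}_0 = \textsf{A}^0 * \textsf{X}_0 = \textsf{I} * \textsf{X}_0$, which is immediate from the definition of the U-identity tensor $\textsf{I}$ and the Einstein product in \eqref{eq:0}; for $k=1$ the claim reduces to the governing equation itself.

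For the inductive step, I would assume $\textsf{X}_k = \textsf{A}^k * \textsf{X}_0$ and then compute
\begin{equation*}
\textsf{X}_{k+1} \;=\; \textsf{A} * \textsf{X}_k \;=\; \textsf{A} * (\textsf{A}^k * \textsf{X}_0) \;=\; (\textsf{A} * \textsf{A}^k) * \textsf{X}_0 \;=\; \textsf{A}^{k+1} * \textsf{X}_0,
\end{equation*}
where the first equality uses the unforced dynamics, the second uses the inductive hypothesis, the third is associativity of the Einstein product, and the fourth is the definition of $\textsf{A}^{k+1}$.

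The only nontrivial point is the associativity step $\textsf{A} * (\textsf{A}^k * \textsf{X}_0) = (\textsf{A} * \textsf{A}^k) * \textsf{X}_0$, where one factor is an $N$-th order state tensor rather than an even-order paired tensor. I would justify this by invoking the unfolding map $\varphi$ from \cref{def:1}: under $\varphi$ the Einstein product of even-order paired tensors corresponds to ordinary matrix multiplication, and the Einstein action on an $N$-th order tensor $\textsf{X}$ corresponds to the matrix-vector product acting on the vectorization of $\textsf{X}$ (by the definition of $ivec$). Associativity of matrix multiplication then transfers back to give the desired identity. An alternative, equivalent route would be to unfold the entire recursion at the outset: write $\textbf{x}_{t+1} = \varphi(\textsf{A})\,\textbf{x}_t$, apply the standard LTI solution $\textbf{x}_k = \varphi(\textsf{A})^k \textbf{x}_0$, and fold back using $\varphi(\textsf{A}^k) = \varphi(\textsf{A})^k$, which is the group isomorphism property.

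There is no real obstacle in this proof; the only thing to be careful about is not to conflate the two flavors of Einstein product (between two paired tensors, versus between a paired tensor and an $N$-th order tensor), and to make explicit that associativity holds in the mixed case. I would therefore state associativity as a one-line remark grounded in $\varphi$ before writing the induction.
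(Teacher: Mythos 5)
Your proposal is correct and matches the paper's intent: the paper simply remarks that the proof ``is straightforward using the notion of even-order paired tensors and the Einstein product'' and omits the details, which your induction (with the associativity step justified through the unfolding map $\varphi$) supplies in full. No gaps.
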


The proof is straightforward using the notion of even-order paired tensors and the Einstein product. Applying \cref{pro9}, we can write down the explicit solution of (\ref{eq11}) which takes an analogous form to the LTI system
\begin{equation}\label{eq:31}
\textsf{X}_k=\textsf{A}^{k}* \textsf{X}_0+\sum_{j=0}^{k-1}\textsf{A}^{k-j-1}*\textsf{B}*\textsf{U}_j.
\end{equation}

Lastly, we want to note that one can always transform the MLTI system (\ref{eq11})  into a LTI system using $\varphi$, i.e. $\textbf{x}_{t+1} = \varphi(\textsf{A})\textbf{x}_t+\varphi(\textsf{B})\textbf{u}_t$, and determine the stability, reachability and observability using classical matrix techniques.

\section{Tensor algebra continued} \label{sec:4}
We next discuss notions of block tensors and tensor decompositions which will form the basis for developing tensor algebra based concepts of stability, reachability and observability of the MLTI system (\ref{eq11}).
\subsection{Block tensors}\label{sec:4.1}
Analogously to  block matrices, one can define the notion of block tensors. For tensors of the same size, we propose a block tensor construction (first appeared in \cite{Chen_2018}) which does not introduce any wasteful zeros compared to the block tensors proposed in \cite{doi:10.1080/03081087.2015.1083933}, and thus offers computational advantages.

\begin{definition}\label{def:nmodedef}
Let  $\textsf{A},\textsf{B}\in\mathbb{R}^{J_1\times I_1\times \dots \times J_N\times I_N}$ be two even-order paired tensors. The $n$-mode row block tensor $\begin{vmatrix}
	\textsf{A} & \textsf{B}
\end{vmatrix}_n\in \mathbb{R}^{J_1\times I_1 \times \dots \times J_n\times 2I_n \times\dots \times J_N\times I_N}$ is defined by
\begin{equation*}
(\begin{vmatrix}
	\textsf{A} & \textsf{B}
\end{vmatrix}_n)_{j_1l_1\dots j_Nl_N} = 
\begin{cases}
\textsf{A}_{j_1l_1\dots j_Nl_N},&\text{ } j_k=1,\dots,J_k, \text{ } l_k=1,\dots,I_k \text{ } \forall k\\
\textsf{B}_{j_1l_1\dots j_Nl_N}, &\text{ }j_k=1,\dots,J_k \text{ }\forall k, l_k = 1,\dots, I_k \text{ for } k\neq n\\
&\text{ and } l_k=I_k+1,\dots,2I_k \text{ for } k=n
\end{cases}.
\end{equation*}
\end{definition}

For example, if $\textsf{A},\textsf{B}\in\mathbb{R}^{2\times 2\times 2\times 2}$, then the 1-mode row block tensor is given by $\begin{vmatrix}
	\textsf{A} & \textsf{B}
\end{vmatrix}_1\in \mathbb{R}^{2\times 4\times 2\times 2}$ such that $(\begin{vmatrix}
	\textsf{A} & \textsf{B}
\end{vmatrix}_1)_{:i_1::} = \textsf{A}$ for $i_1=1,2$ and $(\begin{vmatrix}
	\textsf{A} & \textsf{B}
\end{vmatrix}_1)_{:i_1::} = \textsf{B}$ for $i_1=3,4$. Similarly for $\begin{vmatrix}
	\textsf{A} & \textsf{B}
\end{vmatrix}_2\in \mathbb{R}^{2\times 2\times 2\times 4}$. Detailed explanations of the MATLAB colon operation \texttt{:} can be found in \cref{app:5.1}. When $N=1$, it reduces to the row block matrices.  The $n$-mode column block tensor \begin{equation*}
\begin{vmatrix}
	\textsf{A}\\
	\textsf{B}
\end{vmatrix}_n\in \mathbb{R}^{J_1\times I_1 \times \dots \times 2J_n\times I_n \times\dots \times J_N\times I_N}\end{equation*}
can be defined in a similar manner. The $n$-mode block tensors exhibit many  properties analogous to  block matrix computations, e.g. the Einstein product can distribute over block tensors, and  the blocks of $n$-mode row block tensors map to contiguous blocks under $\varphi$ up to some permutations \cite{doi:10.1137/110820609}, see details in \cref{app:2}. Therefore, rank is preserved in the block tensor unfolding, i.e.
$
\text{rank}_U(\begin{vmatrix}
	\textsf{A} & \textsf{B}
\end{vmatrix}_n) = \text{rank}(\begin{bmatrix}
	\varphi(\textsf{A}) & \varphi(\textsf{B})
\end{bmatrix}),
$ where $[\cdot]$ denotes the block matrix operation.

Given $K$ even-order paired tensors $\textsf{X}_n\in\mathbb{R}^{J_1\times I_1\times \dots\times J_N\times I_N}$, one can apply \cref{def:nmodedef} successively to create a $J_1\times I_1\times \dots \times J_n\times I_nK  \times \dots \times J_N\times I_N$ even-order $n$-mode row block tensor. However, a more general concatenation approach can be defined for multiple blocks.

\begin{definition}
Given $K$ even-order paired tensors $\textsf{X}_n\in\mathbb{R}^{J_1\times I_1\times \dots\times J_N\times I_N}$, if $K=K_1K_2\dots K_N$, the $J_1\times I_1K_1\times \dots \times J_N\times I_NK_N$ even-order mode row block tensor $\textsf{Y}$ can be constructed in the following way:
\begin{enumerate}
 \item Compute the 1-mode row block tensor concatenation over  $\{\textsf{X}_1,\cdots,\textsf{X}_{K_1}\}$, $\{\textsf{X}_{K_1+1},\cdots,\textsf{X}_{2K_1}\}$ and so on to obtain $K_2K_3\dots K_N$ block tensors denoted by $\textsf{X}_1^{(1)},\textsf{X}_2^{(1)},\dots,\textsf{X}_{K_2K_3\dots K_N}^{(1)}$;
 \item Compute the 2-mode row block tensors concatenation over $\{\textsf{X}_1^{(1)},\cdots,\textsf{X}_{K_2}^{(1)}\}$, $\{\textsf{X}_{K_2+1}^{(1)},\cdots,\textsf{X}_{2K_2}^{(1)}\}$ and so on to obtain $K_3K_4\dots K_N$ block tensors denoted by $\textsf{X}_1^{(2)},\textsf{X}_2^{(2)},\dots,  \textsf{X}_{K_3K_4\dots K_N}^{(2)}$;
 \item Keep repeating the process until the last $N$-mode row block tensor is obtained.
 \end{enumerate}
  We denote the mode row block tensor as $\textsf{Y} = \begin{vmatrix} \textsf{X}_1 &\textsf{X}_2 & \dots & \textsf{X}_{K}\end{vmatrix}$.
\end{definition}

For example, suppose that $\textsf{X}_n\in\mathbb{R}^{2\times 2\times 2\times 2\times 2\times 2}$ for $n=1,2,\dots,K$ and $K=8$. Let  $K=K_1K_2K_3$ with $K_1=K_2=K_3=2$. Given this factorization of $K$, the mode row block tensor $\textsf{Y}\in\mathbb{R}^{2\times 4\times 2\times 4\times 2\times 4}$ is constructed in the manner shown in \cref{fig2}, in which $\textsf{X}^{(1)}_n\in\mathbb{R}^{2\times 4\times 2\times 2\times 2\times 2}$ and $\textsf{X}^{(2)}_n\in\mathbb{R}^{2\times 4\times 2\times 4\times 2\times 2}$. Another factorization with $K_1=2$, $K_2=4$ and $K_3=1$ would return $\textsf{Y}\in\mathbb{R}^{2\times 4\times 2\times 8\times 2\times 2}$. The generalized mode column block tensors with multiple blocks can be constructed in a similar manner. When $I_n=1$ for all $n$, the above generalized mode row block tensor maps exactly to contiguous blocks in its unfolding under $\varphi$, which could be beneficial in many block tensor applications. Furthermore, the choices of $K_n$  may affect the structure of  mode block tensors, which can be significant in tensor ranks/decompositions \cite{chen_2019}. 

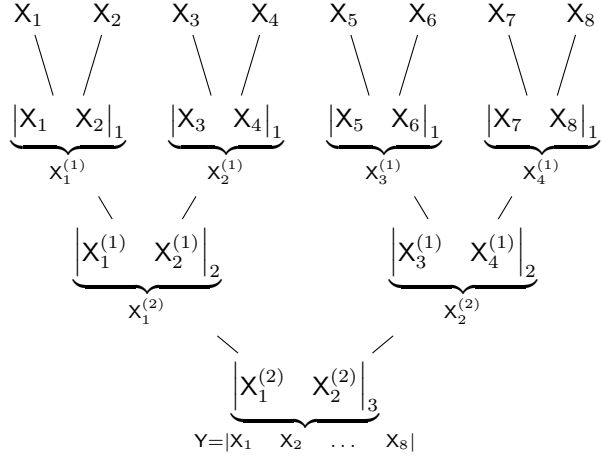
\begin{figure}[h]\label{fig2}
\centering
\begin{tikzpicture}[scale = 0.35,level distance=5cm]
\tikzstyle{level 1}=[sibling distance=12cm]
\tikzstyle{level 2}=[sibling distance=6cm]
\tikzstyle{level 3}=[sibling distance= 3cm]
\node {$\underbrace{\begin{vmatrix} \textsf{X}^{(2)}_1 &\textsf{X}^{(2)}_2\end{vmatrix}_3}_{\scriptsize{\textsf{Y}=\begin{vmatrix} \textsf{X}_1 &\textsf{X}_2 & \dots & \textsf{X}_{8}\end{vmatrix}}}$} [grow'=up]
child {node {$\underbrace{\begin{vmatrix} \textsf{X}^{(1)}_1 &\textsf{X}^{(1)}_2\end{vmatrix}_2}_{\textsf{X}_1^{(2)}}$}
child {node {$\underbrace{\begin{vmatrix} \textsf{X}_1 &\textsf{X}_2\end{vmatrix}_1}_{\textsf{X}_1^{(1)}}$}
child {node {$\textsf{X}_1$}}
child {node {$\textsf{X}_2$}}}
child {node {$\underbrace{\begin{vmatrix} \textsf{X}_3 &\textsf{X}_4\end{vmatrix}_1}_{\textsf{X}_2^{(1)}}$}
child {node {$\textsf{X}_3$}}
child {node {$\textsf{X}_4$}}}
}
child {node {$\underbrace{\begin{vmatrix} \textsf{X}^{(1)}_3 &\textsf{X}^{(1)}_4\end{vmatrix}_2}_{\textsf{X}_2^{(2)}}$}
child {node {$\underbrace{\begin{vmatrix} \textsf{X}_5 &\textsf{X}_6\end{vmatrix}_1}_{\textsf{X}_3^{(1)}}$}
child {node {$\textsf{X}_5$}}
child {node {$\textsf{X}_6$}}}
child {node {$\underbrace{\begin{vmatrix} \textsf{X}_7 &\textsf{X}_8\end{vmatrix}_1}_{\textsf{X}_4^{(1)}}$}
child {node {$\textsf{X}_7$}}
child {node {$\textsf{X}_8$}}}
};
\end{tikzpicture}
\caption{An example of mode row block tensor.}
\end{figure}

\subsection{Tensor ranks and decompositions}\label{sec:4.2}
There are several definitions of tensor ranks \cite{doi:10.1137/06066518X, doi:10.1137/07070111X, Kolda06multilinearoperators}, which are intimately related to different notions of tensor decompositions. The multilinear ranks or the $n$-ranks of $\textsf{X}$ are the ranks of the $n$-mode matricizations, denoted by $\text{rank}_n(\textsf{X})$. The multilinear ranks are related to the so-called Higher-Order Singular Value Decomposition (HOSVD), a multilinear generalization of the matrix Singular Value Decomposition (SVD) \cite{5447070, doi:10.1137/S0895479896305696}.
\begin{theorem}[HOSVD] \label{thm:2.1}
A tensor $\textsf{X}\in\mathbb{R}^{J_1\times J_2\times \dots \times J_N}$ can be written as
\begin{equation}
\textsf{X} = \textsf{S}\times_1 \textbf{U}_1\times_2\dots \times_N \textbf{U}_N,
\end{equation}
where, $\textbf{U}_n\in\mathbb{R}^{J_n\times J_n}$ are orthogonal matrices, and $\textsf{S}\in\mathbb{R}^{J_1\times J_2\times \dots \times J_N}$ is a tensor of which the subtensors $\textsf{S}_{j_n=\alpha}$ obtained by fixing the $n$-th index to $\alpha$, have the properties:
\begin{enumerate}
\item all-orthogonality: two subtensors $\textsf{S}_{j_n=\alpha}$ and $\textsf{S}_{j_n=\beta}$ are orthogonal for all possible values of $n$, $\alpha$ and $\beta$ subject to $\alpha\neq \beta$;
\item ordering: $\|\textsf{S}_{j_n=1}\|\geq \dots \geq \|\textsf{S}_{j_n=J_n}\|\geq 0$ for all possible values of $n$.
\end{enumerate}
The Frobenius norms $\|\textsf{S}_{j_n=j}\|$, denoted by $\gamma_{j}^{(n)}$, are the $n$-mode singular values of $\textsf{X}$.
\end{theorem}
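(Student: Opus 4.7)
The plan is to construct the factor matrices $\textbf{U}_n$ directly from singular value decompositions of the mode-$n$ matricizations of $\textsf{X}$, define $\textsf{S}$ by the obvious inverse Tucker product, and then read off the all-orthogonality and ordering properties from the SVD structure.

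\textbf{Step 1: Construct the factors and the core.} For each $n \in \{1,\dots,N\}$, compute the $n$-mode matricization $\textbf{X}_{(n)} \in \mathbb{R}^{J_n \times \prod_{k\neq n}J_k}$, and take its (full) SVD
\begin{equation*}
\textbf{X}_{(n)} = \textbf{U}_n \boldsymbol{\Sigma}_n \textbf{V}_n^{\top},
\end{equation*}
where $\textbf{U}_n \in \mathbb{R}^{J_n\times J_n}$ is orthogonal, $\boldsymbol{\Sigma}_n$ is diagonal with nonincreasing entries $\sigma^{(n)}_1 \geq \sigma^{(n)}_2 \geq \dots \geq 0$, and $\textbf{V}_n$ has orthonormal columns. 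Define
\begin{equation*}
\textsf{S} = \textsf{X} \times_1 \textbf{U}_1^{\top} \times_2 \textbf{U}_2^{\top} \times_3 \dots \times_N \textbf{U}_N^{\top}.
\end{equation*}
Because the $\textbf{U}_n$ are orthogonal, applying $\times_n \textbf{U}_n$ back on each mode recovers $\textsf{X} = \textsf{S}\times_1 \textbf{U}_1 \times_2 \dots \times_N \textbf{U}_N$, giving the claimed factorization.

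\textbf{Step 2: Verify all-orthogonality.} The key identity is the standard formula for the matricization of a Tucker product:
\begin{equation*}
\textbf{S}_{(n)} = \textbf{U}_n^{\top}\, \textbf{X}_{(n)} \,\bigl(\textbf{U}_N \otimes \cdots \otimes \textbf{U}_{n+1} \otimes \textbf{U}_{n-1} \otimes \cdots \otimes \textbf{U}_1\bigr).
\end{equation*}
Substituting the SVD of $\textbf{X}_{(n)}$ gives $\textbf{S}_{(n)} = \boldsymbol{\Sigma}_n \textbf{V}_n^{\top}\textbf{W}_n$, where $\textbf{W}_n$ is the Kronecker product above, which is itself orthogonal since each $\textbf{U}_k$ is. Hence the rows of $\textbf{S}_{(n)}$ are pairwise orthogonal with $\alpha$-th row having norm $\sigma^{(n)}_\alpha$. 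Since the $\alpha$-th row of $\textbf{S}_{(n)}$ is exactly the vectorization of the subtensor $\textsf{S}_{j_n=\alpha}$, we obtain $\langle \textsf{S}_{j_n=\alpha},\textsf{S}_{j_n=\beta}\rangle = 0$ whenever $\alpha\neq\beta$, and $\|\textsf{S}_{j_n=\alpha}\| = \sigma^{(n)}_\alpha$.

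\textbf{Step 3: Ordering and singular value identification.} The ordering property $\|\textsf{S}_{j_n=1}\| \geq \dots \geq \|\textsf{S}_{j_n=J_n}\| \geq 0$ is immediate from Step 2 together with the nonincreasing order of the singular values of $\textbf{X}_{(n)}$. The identity $\gamma^{(n)}_j = \|\textsf{S}_{j_n=j}\| = \sigma^{(n)}_j$ then shows these norms are indeed the $n$-mode singular values of $\textsf{X}$.

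The only real subtlety is the matricization identity used in Step 2; once that is invoked, all-orthogonality and ordering become direct consequences of properties of the matrix SVD and the fact that Kronecker products of orthogonal matrices are orthogonal. I would therefore regard verifying (or citing) this Tucker-matricization identity as the main technical step, while the rest of the argument is bookkeeping.
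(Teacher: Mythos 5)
Your proof is correct. The paper does not prove this theorem itself---it simply cites De Lathauwer, De Moor, and Vandewalle---and your argument is precisely the standard constructive proof from that reference: build each $\textbf{U}_n$ from the SVD of $\textbf{X}_{(n)}$, define the core by applying the transposes, and read off all-orthogonality and ordering from the matricization identity $\textbf{S}_{(n)} = \boldsymbol{\Sigma}_n \textbf{V}_n^{\top}\textbf{W}_n$ (the exact ordering of the Kronecker factors in $\textbf{W}_n$ depends on the matricization convention, but this does not affect the argument since $\textbf{W}_n$ is orthogonal in any case).
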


De Lathauwer et al. \cite{doi:10.1137/S0895479896305696} showed that the number of nonvanishing $n$-mode singular values from the HOSVD of a tensor is equal to its $n$-mode multilinear rank. In addition, the error bound of the low mutilinear rank approximation is provided in \cite{doi:10.1137/S0895479896305696}. Unlike the matrix SVD, the approximation fails to obtain the best rank approximation of $\textsf{X}$. Nevertheless, it still can provide a ``good'' estimate with appropriate $n$-mode singular values truncated \cite{doi:10.1137/S0895479896305696}.

Analogous to rank-one matrices, a tensor $\textsf{X}$ is rank-one if it can be written as the outer product of $N$ vectors, i.e.
$
\textsf{X}=\textbf{a}^{(1)}\circ  \textbf{a}^{(2)} \circ \dots \circ \textbf{a}^{(N)}.
$
The CANDECOMP/PARAFAC Decomposition (CPD) decomposes a tensor $\textsf{X}\in\mathbb{R}^{J_1\times J_2\times \dots \times J_N}$ into a sum of rank-one tensors as form of outer products. It is often useful to normalize all the vectors and have weights $\lambda_r>0$ in descending order in front:
\begin{equation}\label{eq:8}
\textsf{X}=\sum_{r=1}^{R}\lambda_r\textbf{a}_r^{(1)}\circ  \textbf{a}_r^{(2)} \circ \dots \circ \textbf{a}_r^{(N)},
\end{equation}
where, $\textbf{a}_r^{(n)}\in\mathbb{R}^{J_n}$ have unit length, and $R$ is called the CP rank of $\textsf{X}$ if it is the minimum integer that achieves (\ref{eq:8}). The factor matrices $\textbf{A}^{(n)}\in\mathbb{R}^{J_n\times R}$ are the combination of the vectors from the rank-one components for $n=1,2,\dots, N$, i.e.  $\textbf{A}^{(n)}=\begin{bmatrix} \textbf{a}_1^{(n)} & \textbf{a}_2^{(n)} & \dots & \textbf{a}_R^{(n)}\end{bmatrix}$. The CPD is unique up to scaling and permutation under a weak condition: for $N\geq 2$ and $R\geq 2$,
$
\sum_{n=1}^N k_{\textbf{A}^{(n)}}\geq 2R+(N-1),
$
where $k_{\textbf{A}^{(n)}}$, called the $k$-rank of a matrix, is the maximum number of columns of $\textbf{A}^{(n)}$ that are linearly independent with each other \cite{KRUSKAL197795, doi:10.1002/1099-128X, STEGEMAN2007540}.

The CP rank of a tensor is always greater than or equal to its multilinear ranks \cite{doi:10.1137/06066518X}. In fact, it is greater than or equal to any unfolding matrix rank \cite{doi:10.1137/090748330} (which can be used in unfolding rank and TT-ranks defined later too). The best CP rank approximation is ill-posed \cite{doi:10.1137/06066518X}, but carefully truncating the CP rank will yield a good estimate of the original tensor. Both CPD and HOSVD are special cases of Tucker Decomposition, which decomposes a tensor into the form of Tucker product (\ref{eq5}), i.e. $\textsf{Y} = \textsf{X}\times\{\textbf{A}_1,\textbf{A}_2,\dots,\textbf{A}_N\}$ \cite{KRUPPA20175610}.

The Tensor Train Decomposition (TTD) of $\textsf{X}\in\mathbb{R}^{J_1\times J_2\times \dots \times J_N}$ is given by
\begin{equation}\label{train}
\textsf{X} = \sum_{r_0=1}^{R_0}\dots \sum_{r_N=1}^{R_N}\textsf{X}^{(1)}_{r_0:r_1} \circ \textsf{X}^{(2)}_{r_1:r_2}\circ \dots \circ \textsf{X}^{(N)}_{r_{N-1}:r_N},
\end{equation}
where, $\{R_0,R_1,\dots,R_N\}$ is the set of TT-ranks with $R_0=R_N=1$, and $\textsf{X}^{(n)}\in\mathbb{R}^{R_{n-1}\times J_n\times R_{n}}$ are called the core tensors of the TTD \cite{doi:10.1137/090752286}. Here we have used : for brevity of notation, for the full definition see \cref{app:5.1}. Standard TTD algorithms, such as Algorithm 3 in \cite{Klus_2018}, with zero truncation will return the optimal TT-ranks 
\begin{equation*}
R_n=\text{rank}(\texttt{reshape}(\textsf{X},\prod_{i=1}^nJ_i,\prod_{i=n+1}^N J_i))
\end{equation*}
 for $n=1,2,\dots,N-1$. A core tensor $\textsf{X}^{(n)}$ is left-orthonormal if $(\bar{\textbf{X}}^{(n)})^\top\bar{\textbf{X}}^{(n)} = \textbf{I}\in\mathbb{R}^{R_n\times R_n}$, and is right-orthonormal if $\underline{\textbf{X}}^{(n)}(\underline{\textbf{X}}^{(n)})^\top = \textbf{I}\in\mathbb{R}^{R_{n-1}\times R_{n-1}}$, where $\bar{\textbf{X}}^{(n)}= \texttt{reshape}(\textsf{X}^{(n)},R_{n-1}J_n,R_n)$, and $\underline{\textbf{X}}^{(n)} = \texttt{reshape}(\textsf{X}^{(n)},R_{n-1},J_nR_n)$, respectively \cite{DOLGOV20141207,Klus_2018}. Here $\texttt{reshape}$ refers to the reshape operation in MATLAB, see details in \cref{app:5.2}. Detailed algorithms for left- and right-orthonormalization can be found in \cite{Klus_2018}. TTD is advantageous in that it provides better compression, i.e. truncating the TT-ranks results in a quasi-optimal approximation of $\textsf{X}$, and is computationally more robust \cite{doi:10.1137/090752286}.

 Eigenvalue problems for tensors were first explored by Qi \cite{QI20051302} and Lim \cite{singularvaluetensor} independently. Brazell et al. \cite{doi:10.1137/100804577} formulated a new tensor eigenvalue problem through the isomorphism $\varphi$ for fourth-order tensors, and Cui et al. \cite{doi:10.1080/03081087.2015.1071311} extended the tensor eigenvalue problem to even-order tensors.  
\begin{definition}\label{def:2.10}
Let $\textsf{A}\in\mathbb{R}^{J_1\times J_1\times \dots \times J_N\times J_N}$ be an even-order square tensor. If $\textsf{X}\in\mathbb{C}^{J_1\times J_2\dots\times J_N}$ is a nonzero $N$-th order tensor, $\lambda\in \mathbb{C}$, and $\textsf{X}$ and $\lambda$ satisfy
$
\textsf{A}*\textsf{X}=\lambda\textsf{X},
$
then we call $\lambda$ and $\textsf{X}$ as the U-eigenvalue and U-eigentensor of $\textsf{A}$, respectively.
\end{definition}

The algebraic and geometric multiplicities of U-eigenvalues can be defined as for matrices. The generalization of the Caley-Hamilton theorem for the tensor case can be obtained by the isomorphism property, i.e. an even-order square tensor $\textsf{A}$ satisfies its own characteristic polynomial $p(\lambda) = \text{det}_U(\lambda \textsf{I}-\textsf{A})$. Moreover, it can be shown that the notion of U-eigenvalues is a generalization of Z-eigenvalues and M-eigenvalues as proposed in \cite{Huang_2017, singularvaluetensor, QI20051302}. Detailed proofs are omitted in this paper. 

\begin{proposition}\label{prop:TEVD}
The tensor eigenvalue problem in \cref{def:2.10} can be represented by 
$
\textsf{A} = \textsf{V} *\textsf{D}* \textsf{V}^{-1} \label{eq216}
$
where $\textsf{D}\in\mathbb{R}^{J_1\times J_1\times \dots \times J_N\times J_N}$ is an U-diagonal tensor with U-eigenvalues on its diagonal, and $\textsf{V}\in\mathbb{R}^{J_1\times J_1\times \dots \times J_N\times J_N}$ is a mode row block tensor consisting of all the U-eigentensors, i.e.
$
\textsf{V} = \begin{vmatrix} \textsf{X}_1 & \textsf{X}_2&\dots & \textsf{X}_{\Pi_{\mathcal{J}}}\end{vmatrix}.
$
We have chosen $K_n=J_n$ in applying the mode row block tensor operation which enables to express the TEVD in the form analogous to the matrix case.
\end{proposition}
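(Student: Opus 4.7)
The plan is to lift the tensor eigendecomposition to the matrix setting via the isomorphism $\varphi$ of \cref{def:1}. By the group isomorphism property of $\varphi$, it sends the Einstein product to ordinary matrix multiplication and U-inverses to matrix inverses, so the TEVD should arise as the $\varphi^{-1}$-image of the classical matrix eigendecomposition applied to $\mathbf{A}=\varphi(\textsf{A})\in\mathbb{R}^{\Pi_\mathcal{J}\times\Pi_\mathcal{J}}$. First I would assume $\mathbf{A}$ is diagonalizable (the natural analogue of the implicit hypothesis in the matrix case) and write $\mathbf{A}=\mathbf{V}\mathbf{D}\mathbf{V}^{-1}$ with $\mathbf{D}=\mathrm{diag}(\lambda_1,\dots,\lambda_{\Pi_\mathcal{J}})$ and $\mathbf{V}=[\mathbf{v}_1\,\cdots\,\mathbf{v}_{\Pi_\mathcal{J}}]$. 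Reshaping each column $\mathbf{v}_i$ via $ivec^{-1}(\cdot,\mathcal{J})$ yields $\textsf{X}_i\in\mathbb{C}^{J_1\times\cdots\times J_N}$, and the matrix identity $\mathbf{A}\mathbf{v}_i=\lambda_i\mathbf{v}_i$ translates entry-by-entry, using the defining formula (\ref{eq:0}) of the Einstein product, into $\textsf{A}*\textsf{X}_i=\lambda_i\textsf{X}_i$. Hence each $\textsf{X}_i$ is a U-eigentensor with U-eigenvalue $\lambda_i$ in the sense of \cref{def:2.10}.

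Next I would verify that $\varphi^{-1}(\mathbf{D})$ and $\varphi^{-1}(\mathbf{V})$ coincide with the objects $\textsf{D}$ and $\textsf{V}$ named in the statement. For $\textsf{D}$ this is a short index chase: the only nonzero entries $\mathbf{D}_{pq}$ occur at $p=q$, and since $p=ivec(\mathbf{j},\mathcal{J})$ and $q=ivec(\mathbf{i},\mathcal{J})$ with $ivec(\cdot,\mathcal{J})$ bijective, this forces $j_n=i_n$ for every $n$, so $\varphi^{-1}(\mathbf{D})$ is U-diagonal with the $\lambda_i$'s on its diagonal. For $\textsf{V}$, I would view each $\textsf{X}_i$ as a $2N$-th order paired tensor of shape $J_1\times 1\times\cdots\times J_N\times 1$ (padding trivial second-pair indices) and apply the nested mode row block construction with $K_n=J_n$; the claim is that the resulting block tensor agrees with $\varphi^{-1}(\mathbf{V})$.

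The main obstacle is precisely this identification in the $\textsf{V}$ step, i.e.\ showing that the ordering produced by iterating the $n$-mode row block operation with factorization $K_n=J_n$ matches the column ordering of $\mathbf{V}$ inherited from $\varphi$. I would handle it by induction on $N$: the $N=1$ case reduces to ordinary column concatenation under the 1-mode row block operation, and the inductive step uses that the $n$-th level of the nested construction deposits each block into the second index of the $n$-th pair according to $i_n$, so that the overall linear index of $\textsf{X}_i$ reproduces $ivec(\mathbf{i},\mathcal{J})=i_1+\sum_{n=2}^{N}(i_n-1)\prod_{l=1}^{n-1}J_l$, which is exactly the column index that $\varphi$ assigns to $\mathbf{v}_i=\varphi(\textsf{X}_i)$. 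Once this correspondence is established, pulling $\mathbf{A}=\mathbf{V}\mathbf{D}\mathbf{V}^{-1}$ back through the isomorphism immediately yields $\textsf{A}=\textsf{V}*\textsf{D}*\textsf{V}^{-1}$ as claimed.
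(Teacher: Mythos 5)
Your proof is correct, but it takes a different route from the paper's. The paper proves this in one line by invoking \cref{pro:2.9}: since the Einstein product distributes over the (nested) $n$-mode row block construction, $\textsf{A}*\textsf{V}=\begin{vmatrix}\textsf{A}*\textsf{X}_1 & \dots & \textsf{A}*\textsf{X}_{\Pi_{\mathcal{J}}}\end{vmatrix}=\begin{vmatrix}\lambda_1\textsf{X}_1 & \dots & \lambda_{\Pi_{\mathcal{J}}}\textsf{X}_{\Pi_{\mathcal{J}}}\end{vmatrix}=\textsf{V}*\textsf{D}$, and right-multiplying by $\textsf{V}^{-1}$ gives the claim entirely at the tensor level. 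You instead push everything through the isomorphism $\varphi$, apply the classical matrix eigendecomposition to $\varphi(\textsf{A})$, and pull back; the burden then shifts to verifying that $\varphi^{-1}(\mathbf{V})$ equals the mode row block tensor of the $\textsf{X}_i$ with the ordering induced by $K_n=J_n$, which is exactly the content of the paper's \cref{pro6} in the $I_n=1$ case (where the permutation $\mathbf{P}$ is trivial), so your inductive ordering argument is re-deriving an available fact rather than something new. Your route has the merit of making explicit the diagonalizability hypothesis that the statement leaves implicit (the paper only acknowledges it later, in the proof of \cref{pro:3.5}) and of reducing the whole claim to standard linear algebra; the paper's route avoids the index bookkeeping entirely by working with the block algebra. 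Both are sound, and the only shared caveat is that $\textsf{D}$ and $\textsf{V}$ may be complex even though the statement writes them as real.
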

\begin{proof}
The proof follows immediately from \cref{pro:2.9}. 
\end{proof}

\subsection{Rank relations}\label{sec:4.3}
We establish new results relating the unfolding rank of an even-order paired tensor to its multilinear ranks, CP rank and TT-ranks. These relationships are useful for checking multilinear generalizations of reachability and observability rank conditions.

\begin{proposition}\label{pro:2.95}
Let $\textsf{A}\in\mathbb{R}^{J_1\times I_1\times \dots\times J_N\times I_N}$ be an even-order paired tensor. If $\text{rank}_U(\textsf{A}) = \Pi_{\mathcal{J}}$ (or $\text{rank}_U(\textsf{A}) = \Pi_{\mathcal{I}}$), then $\text{rank}_{2n-1} (\textsf{A}) = J_n$ (or $\text{rank}_{2n} (\textsf{A}) = I_n$) for $n=1,2,\dots,N$.
\end{proposition}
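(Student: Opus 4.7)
The plan is to prove it by a row‑dependence argument that translates a hypothetical linear dependence among rows of the $(2n{-}1)$‑mode matricization into a linear dependence among rows of $\varphi(\textsf{A})$, which would contradict full row rank. Since for any tensor we always have $\text{rank}_{2n-1}(\textsf{A}) \le J_n$ (because $\textbf{A}_{(2n-1)}$ has only $J_n$ rows), it suffices to show that those $J_n$ rows are linearly independent whenever $\text{rank}_U(\textsf{A}) = \Pi_{\mathcal{J}}$.

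First, I would pin down the indexing correspondence. The map $\varphi$ is the $\textbf{rc}$‑unfolding with $z=N$ and the permutation $\mathbb{S}$ that sends odd positions to the first $N$ slots and even positions to the last $N$ slots; concretely, the row of $\varphi(\textsf{A})$ labeled by the multi‑index $(j_1,\dots,j_N)$ via $ivec(\cdot,\mathcal{J})$ is exactly the vector of entries $\textsf{A}_{j_1 i_1 \dots j_N i_N}$ as $(i_1,\dots,i_N)$ varies. The $(2n{-}1)$‑mode matricization $\textbf{A}_{(2n-1)}$ has its $j_n$-th row indexed by fixing the $(2n{-}1)$‑th mode to $j_n$ and listing all remaining entries. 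Hence, up to a column permutation, the $j_n$‑th row of $\textbf{A}_{(2n-1)}$ is the concatenation of those rows of $\varphi(\textsf{A})$ whose row multi‑index $(j_1,\dots,j_N)$ has its $n$‑th coordinate equal to $j_n$, i.e.\ $\Pi_{\mathcal{J}}/J_n$ rows of $\varphi(\textsf{A})$ stacked side by side.

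Now suppose, for contradiction, that scalars $c_1,\dots,c_{J_n}$, not all zero, give $\sum_{j_n=1}^{J_n} c_{j_n} (\text{row } j_n \text{ of } \textbf{A}_{(2n-1)}) = 0$. Writing this entrywise yields
\begin{equation*}
\sum_{j_n=1}^{J_n} c_{j_n}\, \textsf{A}_{j_1 i_1 \dots j_N i_N} = 0 \quad \text{for every } (j_1,\dots,\widehat{j_n},\dots,j_N,\, i_1,\dots,i_N).
\end{equation*}
Fixing any choice of the remaining $(j_k)_{k\neq n}$ and letting $(i_1,\dots,i_N)$ range, this identity exhibits a nontrivial linear combination of the $J_n$ rows of $\varphi(\textsf{A})$ whose row labels share the fixed coordinates and differ only in their $n$‑th entry. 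Those $J_n$ rows are therefore linearly dependent, contradicting $\text{rank}(\varphi(\textsf{A})) = \Pi_{\mathcal{J}}$ (which says all $\Pi_{\mathcal{J}}$ rows of $\varphi(\textsf{A})$ are independent). Hence the rows of $\textbf{A}_{(2n-1)}$ are independent and $\text{rank}_{2n-1}(\textsf{A}) = J_n$.

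The parenthetical statement about $\text{rank}_U(\textsf{A})=\Pi_{\mathcal{I}}$ and $\text{rank}_{2n}(\textsf{A}) = I_n$ follows symmetrically, either by running the same argument on columns of $\varphi(\textsf{A})$ or by applying the first half to the U‑transpose $\textsf{A}^{\top}$ and using that U‑transposition swaps each $J_k \leftrightarrow I_k$ pair (so that $\varphi(\textsf{A}^{\top}) = \varphi(\textsf{A})^{\top}$, which preserves rank). The only delicate bookkeeping is the index correspondence in the opening paragraph — once that is stated cleanly the contradiction is immediate, so I do not expect any serious obstacle.
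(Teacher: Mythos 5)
Your proof is correct and takes essentially the same route as the paper's: both rest on the observation that each row (resp.\ column) of the mode matricization is, up to permutation, a concatenation of the rows (resp.\ columns) of $\varphi(\textsf{A})$ sharing the corresponding index, so full unfolding rank forces the matricization to have full rank. The paper phrases this constructively via explicit permutation matrices and a stacking operation for the even-mode case (working WLOG with $\text{rank}_U(\textsf{A})=\Pi_{\mathcal{I}}$), whereas you argue by contradiction on the odd-mode case, but the substance is identical.
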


\begin{proposition}\label{pro:2.11}
Let $\textsf{A}\in\mathbb{R}^{J_1\times I_1\times \dots\times J_N\times I_N}$ be an even-order paired tensor given in the CPD format (\ref{eq:8}) with CP rank equal to $R$. If the following conditions
\begin{equation}
\sum_{n=1:2}^{2N} k_{\textbf{A}^{(n)}} \geq R + N -1 \text{, }
\sum_{n=2:2}^{2N} k_{\textbf{A}^{(n)}} \geq R + N -1
\label{eq:2.17}
\end{equation}
are satisfied for every $k_{\textbf{A}^{(n)}}\geq 1$, then $\text{rank}_U(\textsf{A})= R$.
\end{proposition}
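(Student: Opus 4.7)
The plan is to push the CPD through the unfolding $\varphi$ and reduce the claim to a statement about Khatri--Rao products, for which the Sidiropoulos--Bro full-column-rank criterion under the Kruskal-type hypothesis directly applies.

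First, I would write the CPD of the even-order paired tensor in the form
\begin{equation*}
\textsf{A} = \sum_{r=1}^{R}\lambda_r\, \textbf{a}_r^{(1)}\circ \textbf{a}_r^{(2)} \circ \dots \circ \textbf{a}_r^{(2N)},
\end{equation*}
where, in accordance with the paired indexing $(j_1 i_1 \dots j_N i_N)$, the odd-indexed factors satisfy $\textbf{a}_r^{(2n-1)}\in\mathbb{R}^{J_n}$ and the even-indexed factors satisfy $\textbf{a}_r^{(2n)}\in\mathbb{R}^{I_n}$. Since $\varphi$ is linear and takes an even-order paired tensor into a matrix indexed by $ivec(\textbf{j},\mathcal{J})$ and $ivec(\textbf{i},\mathcal{I})$, I would evaluate $\varphi$ on a single rank-one summand and observe that its $(ivec(\textbf{j},\mathcal{J}),ivec(\textbf{i},\mathcal{I}))$ entry factors as a product of an index-$\textbf{j}$ part and an index-$\textbf{i}$ part. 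Because $ivec$ is exactly column-major vectorization, this identifies each rank-one summand with the rank-one matrix $\textbf{u}_r \textbf{v}_r^\top$, where
\begin{equation*}
\textbf{u}_r = \textbf{a}_r^{(2N-1)}\otimes \textbf{a}_r^{(2N-3)}\otimes \dots\otimes \textbf{a}_r^{(1)},\qquad
\textbf{v}_r = \textbf{a}_r^{(2N)}\otimes \textbf{a}_r^{(2N-2)}\otimes \dots\otimes \textbf{a}_r^{(2)}.
\end{equation*}
Consequently $\varphi(\textsf{A}) = \textbf{U}\,\text{diag}(\lambda_1,\dots,\lambda_R)\,\textbf{V}^\top$, where $\textbf{U}$ and $\textbf{V}$ are the Khatri--Rao (column-wise Kronecker) products of the odd-indexed and even-indexed factor matrices, respectively.

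The second step is to bound the ranks of $\textbf{U}$ and $\textbf{V}$. Here I would invoke the Sidiropoulos--Bro lemma on the $k$-rank of a Khatri--Rao product: for matrices $\textbf{B}_1,\dots,\textbf{B}_N$ each with $R$ columns and $k_{\textbf{B}_n}\geq 1$, one has $k_{\textbf{B}_1\odot\dots\odot\textbf{B}_N}\geq \min\bigl(\sum_{n=1}^{N}k_{\textbf{B}_n}-N+1,\,R\bigr)$. Applying this to the odd-indexed factor matrices, the first inequality in (\ref{eq:2.17}) gives $k_{\textbf{U}}=R$, hence $\textbf{U}$ has full column rank $R$; the second inequality does the same for $\textbf{V}$. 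Since the diagonal weight matrix is invertible (weights $\lambda_r>0$), I conclude $\text{rank}(\varphi(\textsf{A})) = R$, which by definition is $\text{rank}_U(\textsf{A}) = R$. The opposite inequality $\text{rank}_U(\textsf{A})\leq R$ is automatic, since $\varphi(\textsf{A})$ is a sum of $R$ rank-one matrices.

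The main obstacle is bookkeeping: writing $\varphi$ of a rank-one outer product cleanly as a Kronecker product requires being careful about the ordering induced by $ivec$ and matching it to the standard Kronecker convention so that $\textbf{U}$ and $\textbf{V}$ are genuinely the Khatri--Rao products of the odd- and even-indexed factor matrices. Once this identification is in place, the Sidiropoulos--Bro bound closes the argument, and the hypothesis $k_{\textbf{A}^{(n)}}\geq 1$ in the statement is precisely what is needed to exclude degenerate zero-column factors and legitimize the iterated application of that bound.
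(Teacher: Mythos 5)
Your proposal is correct and follows essentially the same route as the paper: unfold the CPD under $\varphi$ into $\textbf{U}\,\text{diag}(\lambda)\,\textbf{V}^\top$ with $\textbf{U},\textbf{V}$ the Khatri--Rao products of the odd- and even-indexed factor matrices, then apply the Sidiropoulos--Bro $k$-rank bound to conclude both have full column rank $R$. The paper packages the iterated $k$-rank argument as a separate auxiliary proposition proved by induction from the pairwise lemma, but the substance is identical.
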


The notations $\sum_{n=1:2}^{2N}$ and $\sum_{n=2:2}^{2N}$ represent the sums of all odd indices and all even indices, respectively. The detailed proofs of \cref{pro:2.95,pro:2.11} can be found in \cref{proof:2.95,proof:2.11}, respectively.

\begingroup
\setlength\arraycolsep{2pt}
\begin{proposition}\label{pro:2.96}
Let $\textsf{A}\in\mathbb{R}^{J_1\times I_1\times \dots\times J_N\times I_N}$ be an even-order paired tensor. Then $\text{rank}_U(\textsf{A}) = \tilde{R}_N$ where $\tilde{R}_N$ is the $N$-th optimal TT-rank of $\tilde{\textsf{A}}$, the $\mathbb{S}$-transpose of \textsf{A} with $\mathbb{S} =\footnotesize{\begin{pmatrix} 1 & 2 & \dots & N  & N+1 & N+2 & \dots & 2N\\1 & 3 & \dots & 2N-1 & 2 & 4 & \dots & 2N\end{pmatrix}}$.
\end{proposition}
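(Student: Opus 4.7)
The plan is to show that $\text{rank}_U(\textsf{A})$ and $\tilde{R}_N$ are both equal to the rank of the same $\Pi_{\mathcal{J}}\times\Pi_{\mathcal{I}}$ matrix, namely the reshape of $\tilde{\textsf{A}}$ that collapses its first $N$ modes into the row index and its last $N$ modes into the column index, and then conclude by transitivity.

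First, I would unpack $\varphi$. By definition, $\text{rank}_U(\textsf{A})=\text{rank}(\varphi(\textsf{A}))$. The paragraph following \cref{def:1} explicitly identifies $\varphi$ as the $\textbf{rc}$-unfolding of the form (\ref{eq20}) with $z=N$ and with permutation $\mathbb{S}$ precisely equal to the one appearing in the statement of this proposition. Evaluating (\ref{eq20}) with that data therefore first forms the $\mathbb{S}$-transpose of $\textsf{A}$, which is $\tilde{\textsf{A}}\in\mathbb{R}^{J_1\times\cdots\times J_N\times I_1\times\cdots\times I_N}$, and then flattens it via $p=ivec(\textbf{j},\mathcal{J})$, $q=ivec(\textbf{i},\mathcal{I})$. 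Because $ivec$ is the column-major linearization of a multi-index, the resulting $\Pi_{\mathcal{J}}\times\Pi_{\mathcal{I}}$ matrix agrees entry-by-entry with $\texttt{reshape}(\tilde{\textsf{A}},\Pi_{\mathcal{J}},\Pi_{\mathcal{I}})$, so $\varphi(\textsf{A})=\texttt{reshape}(\tilde{\textsf{A}},\Pi_{\mathcal{J}},\Pi_{\mathcal{I}})$.

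Second, I would invoke the explicit characterization of optimal TT-ranks quoted immediately after (\ref{train}): for any tensor of order $M$ with mode sizes $(d_1,\ldots,d_M)$, its $n$-th optimal TT-rank equals $\text{rank}(\texttt{reshape}(\cdot,\prod_{i=1}^n d_i,\prod_{i=n+1}^M d_i))$. Applied to $\tilde{\textsf{A}}$, which has order $2N$ and mode sizes $(J_1,\ldots,J_N,I_1,\ldots,I_N)$, at $n=N$, this yields $\tilde{R}_N=\text{rank}(\texttt{reshape}(\tilde{\textsf{A}},\Pi_{\mathcal{J}},\Pi_{\mathcal{I}}))$. Chaining the two identifications gives $\text{rank}_U(\textsf{A})=\text{rank}(\varphi(\textsf{A}))=\text{rank}(\texttt{reshape}(\tilde{\textsf{A}},\Pi_{\mathcal{J}},\Pi_{\mathcal{I}}))=\tilde{R}_N$, which is the claim.

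The only substantive task is the bookkeeping check that the column-major convention of $\texttt{reshape}$ agrees with the index map $ivec(\textbf{j},\mathcal{J})=j_1+\sum_{k=2}^N(j_k-1)\prod_{l=1}^{k-1}J_l$ used to build $\varphi$; this is essentially the definition of $ivec$, so I expect no real obstacle beyond writing the correspondence down carefully. Everything else in the argument is a chain of equalities between two definitions that have already been laid out in \cref{algebra} and \cref{sec:4.2} of the paper.
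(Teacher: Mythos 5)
Your proposal is correct and follows exactly the route the paper intends: the paper's own proof is the one-line remark that the result ``follows from the definition of optimal TT-ranks,'' and your argument simply spells out that remark by identifying $\varphi(\textsf{A})$ with $\texttt{reshape}(\tilde{\textsf{A}},\Pi_{\mathcal{J}},\Pi_{\mathcal{I}})$ and invoking the formula $\tilde{R}_N=\text{rank}(\texttt{reshape}(\tilde{\textsf{A}},\Pi_{\mathcal{J}},\Pi_{\mathcal{I}}))$ for the $N$-th optimal TT-rank. The bookkeeping check that $ivec$ is the column-major linearization used by \texttt{reshape} is exactly the right detail to verify, and it goes through as you describe.
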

\begin{proof}
The result follows from the definition of optimal TT-ranks.
\end{proof}
\endgroup

\textit{Remark.}
Given the TTD of \textsf{A}, the  TTD of $\tilde{\textsf{A}}\in\mathbb{R}^{J_1\times \dots \times J_N\times I_1\times \dots \times I_N}$ can be constructed by manipulating the core tensors $\textsf{A}^{(n)}$ without converting back to the full format. Assume that $J_n=I_n=J$ for all $n$, and $R$ is the average of the TT-ranks of \textsf{A}. If $R$ remains unchanged or decreases during this conversion, the computational complexity is estimated to be at most $\mathcal{O}(N^2J^3R^3)$\footnote{Big O notation: $f(x)=\mathcal{O}(g(x))$ as $x\rightarrow \infty$ if and only if there exists a positive real number $M$ and a real number $x_0$ such that $|f(x)|\leq Mg(x)$ for all $x\geq x_0$.}.  A detailed algorithm for the TTD based permutation can be found in \cite{chen_2019}.

\section{MLTI systems theory}\label{sec:5}
We now introduce the concepts of stability, reachability and observability for MLTI systems. Note that some preliminary results have appeared in our introductory paper \cite{Chen_2018}.
\subsection{Stability}\label{sec:5.1}
There are many notions of stability for dynamical systems \cite{doi:10.1137/1.9781611973884, Kailath_1980, Rugh:1996:LST:225486}. For LTI systems, it is conventional to investigate so-called internal stability. Generalizing from LTI systems, the equilibrium point $\textsf{X}=\textsf{O}$ (\textsf{O} denotes the zero tensors) of an unforced MLTI system is called stable if $\|\textsf{X}_t\|\leq \gamma \|\textsf{X}_0\|$ for some $\gamma>0$, asymptotically stable if $\|\textsf{X}_t\| \rightarrow 0$ as $t\rightarrow \infty$, and unstable if it is not stable. %\cite{Scruggs_2017}.

\begin{proposition}\label{pro:3.5}
Let $\lambda_j$ be the U-eigenvalues of \textsf{A} for $j=ivec(\textbf{j},\mathcal{J})$. For an unforced MLTI system, the equilibrium point $\textsf{X}=\textsf{O}$ is:
\begin{enumerate}
\item stable if and only if $|\lambda_j|\leq 1$ for all $j=1,2,\dots,\Pi_{\mathcal{J}}$; for those equal to 1, its algebraic and geometry multiplicities must be equal;
\item asymptotically stable if $|\lambda_j|<1$ for all $j=1,2,\dots,\Pi_{\mathcal{J}}$;
\item unstable if $|\lambda_j|>1$ for some $j=1,2,\dots,\Pi_{\mathcal{J}}$.
\end{enumerate}
\end{proposition}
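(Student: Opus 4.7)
The plan is to reduce the MLTI stability question to the classical discrete-time LTI stability theorem via the isomorphism $\varphi$ introduced in \cref{def:1}, exploiting (i) that $\varphi$ is norm-preserving and (ii) that U-eigenvalues of $\textsf{A}$ coincide with ordinary eigenvalues of $\varphi(\textsf{A})$.

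First I would apply $\varphi$ to the unforced recursion $\textsf{X}_{t+1}=\textsf{A}*\textsf{X}_t$. Using \cref{pro9} together with the group isomorphism property of $\varphi$ (so that $\varphi(\textsf{A}*\textsf{X})=\varphi(\textsf{A})\,\varphi(\textsf{X})$ when we identify the vectorization of $\textsf{X}$ with $\varphi$ applied to the ``paired'' lift), one obtains the companion LTI system
\begin{equation*}
\textbf{x}_{t+1}=\varphi(\textsf{A})\,\textbf{x}_t, \qquad \textbf{x}_t = \text{vec}(\textsf{X}_t).
\end{equation*}
Because $\varphi$ preserves the Frobenius norm, $\|\textsf{X}_t\|=\|\textbf{x}_t\|$ for every $t$, so the stability, asymptotic stability and instability of the tensor equilibrium $\textsf{X}=\textsf{O}$ are equivalent, definition by definition, to the same properties of the vector equilibrium $\textbf{x}=\textbf{0}$ of this LTI system.

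Next I would connect the spectral data. From \cref{def:2.10}, $\textsf{A}*\textsf{X}=\lambda\textsf{X}$ if and only if $\varphi(\textsf{A})\,\text{vec}(\textsf{X})=\lambda\,\text{vec}(\textsf{X})$, so the U-eigenvalues $\{\lambda_j\}_{j=1}^{\Pi_{\mathcal{J}}}$ of $\textsf{A}$ are precisely the eigenvalues of $\varphi(\textsf{A})\in\mathbb{R}^{\Pi_{\mathcal{J}}\times \Pi_{\mathcal{J}}}$, counted with algebraic multiplicity. Likewise, the geometric multiplicity of $\lambda_j$ as a U-eigenvalue of $\textsf{A}$ equals its geometric multiplicity as an eigenvalue of $\varphi(\textsf{A})$, since $\varphi$ is a linear bijection on the relevant eigenspaces. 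This is the key remark that lets the algebraic/geometric multiplicity condition in part (1) transfer cleanly from the matrix statement.

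Finally, I would invoke the classical internal stability theorem for discrete-time LTI systems (see, e.g., the references cited in \cref{sec:5.1}): the origin of $\textbf{x}_{t+1}=\varphi(\textsf{A})\,\textbf{x}_t$ is stable iff every eigenvalue satisfies $|\lambda|\le 1$ and each unit-modulus eigenvalue has matching algebraic and geometric multiplicity, asymptotically stable iff every eigenvalue satisfies $|\lambda|<1$, and unstable if some eigenvalue satisfies $|\lambda|>1$. Combining with the two equivalences above yields the three claims. There is no real obstacle here; the only subtlety worth checking carefully is that the isomorphism property $\varphi(\textsf{A}*\textsf{X}_t)=\varphi(\textsf{A})\,\text{vec}(\textsf{X}_t)$ holds with the correct index mapping, so that iteration and vectorization commute and powers $\textsf{A}^k$ map to $\varphi(\textsf{A})^k$; this is what makes the reduction to the LTI theorem faithful, and once verified, each of the three stability characterizations follows immediately.
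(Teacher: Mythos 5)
Your proof is correct, but it takes a genuinely different route from the paper's. The paper argues intrinsically in the tensor domain: it combines the tensor eigenvalue decomposition of \cref{prop:TEVD} with \cref{pro9} to write $\textsf{A}^{k}=\sum_{j}\lambda_j^{k}\,\textsf{W}_{j}$ for fixed even-order square tensors $\textsf{W}_{j}$ and reads off the growth of $\|\textsf{X}_k\|$ from the powers $\lambda_j^k$ --- but it explicitly restricts attention to the case where $\textsf{A}$ has a full set of U-eigentensors, i.e.\ where $\varphi(\textsf{A})$ is diagonalizable. You instead unfold the whole problem through the norm-preserving isomorphism $\varphi$, identify the U-eigenvalues (with their algebraic and geometric multiplicities) with the eigenvalues of $\varphi(\textsf{A})$, and quote the classical discrete-time LTI stability theorem. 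The two arguments are close cousins, since the TEVD used in the paper is itself obtained by pulling the matrix eigendecomposition back through $\varphi$; but your reduction buys something concrete: the classical matrix theorem covers defective eigenvalues, so your argument actually justifies the algebraic-versus-geometric multiplicity clause for unit-modulus eigenvalues in part (1), which the paper's diagonalizable-case computation does not address. The one step you flag as needing care --- that $\varphi(\textsf{A}*\textsf{X})$ equals $\varphi(\textsf{A})$ applied to the $ivec$-vectorization of $\textsf{X}$ --- does hold directly from the definitions of the Einstein product and of $\varphi$ in \cref{def:1}, so powers commute with unfolding and the reduction is faithful.
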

\begin{proof}
%The definitions of algebraic and geometry multiplicity are defined analogously as in vector spaces.
We only focus on the case when  $\textsf{A}$ has a full set of U-eigentensors. It follows from \cref{prop:TEVD,pro9} that
$
\textsf{A}^{k} = \sum_{j_1=1}^{J_1}\dots\sum_{j_N=1}^{J_N} \lambda^k_j \textsf{W}_{j_1j_1\dots j_Nj_N}
$
for some even-order square tensors $\textsf{W}_{j_1j_1\dots j_Nj_N}$. Then the results follow immediately.
\end{proof}

\begin{corollary}\label{cor:5.2}
Suppose that the HOSVD of $\textsf{A}$ is provided with $n$-mode singular values. For an unforced MLTI system, the equilibrium point $\textsf{X}=\textsf{O}$ is asymptotically stable if the sum of the $n$-mode singular values square is less than one for any $n$.
\end{corollary}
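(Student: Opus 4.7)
The plan is to translate the hypothesis on $n$-mode singular values into a Frobenius-norm bound $\|\textsf{A}\|<1$, push that bound through the isomorphism $\varphi$ to a spectral radius estimate on $\varphi(\textsf{A})$, and then invoke case~2 of \cref{pro:3.5}.

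First, I would apply \cref{thm:2.1} to $\textsf{A}$ regarded as a $2N$-th order tensor, producing the HOSVD
$\textsf{A}=\textsf{S}\times_{1}\textbf{U}_{1}\times_{2}\cdots\times_{2N}\textbf{U}_{2N}$ with orthogonal factor matrices $\textbf{U}_{n}$. Orthogonal invariance of the Frobenius norm gives $\|\textsf{A}\|=\|\textsf{S}\|$, while the all-orthogonality clause of \cref{thm:2.1} says that the subtensors $\{\textsf{S}_{j_{n}=\alpha}\}_{\alpha}$ are mutually orthogonal in each fixed mode $n$. Consequently
\begin{equation*}
\|\textsf{A}\|^{2}\;=\;\|\textsf{S}\|^{2}\;=\;\sum_{\alpha=1}^{J_{n}}\|\textsf{S}_{j_{n}=\alpha}\|^{2}\;=\;\sum_{\alpha=1}^{J_{n}}\bigl(\gamma_{\alpha}^{(n)}\bigr)^{2},
\end{equation*}
which is in fact independent of $n$. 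Therefore the hypothesis, checked for even a single mode $n$, already forces $\|\textsf{A}\|<1$.

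Next, because $\varphi$ is a tensor unfolding (as noted after \cref{def:1}), $\|\varphi(\textsf{A})\|_{F}=\|\textsf{A}\|<1$. Combining this with the standard matrix inequalities $\rho(\textbf{M})\leq\|\textbf{M}\|_{2}\leq\|\textbf{M}\|_{F}$ gives $\rho(\varphi(\textsf{A}))<1$. Applying $\varphi$ to the eigenvalue equation in \cref{def:2.10} and using that $\varphi$ intertwines the Einstein product with the matrix-vector product shows that the U-eigenvalues of $\textsf{A}$ are exactly the ordinary eigenvalues of $\varphi(\textsf{A})$. Hence every $|\lambda_{j}|<1$, and case~2 of \cref{pro:3.5} yields asymptotic stability.

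Each step is routine; the only delicate point is the very first one, where one must simultaneously use orthogonal invariance of $\|\cdot\|$ (to pass from $\textsf{A}$ to $\textsf{S}$) and all-orthogonality of the core (to express $\|\textsf{S}\|^{2}$ mode-by-mode as the sum of squared $n$-mode singular values). This is exactly what makes the phrase ``for any $n$'' unambiguous in the corollary: every mode contributes the same sum, so one mode suffices. If one wished to bypass \cref{pro:3.5} altogether, an equivalent and self-contained route is to estimate $\|\textsf{X}_{k}\|=\|\textsf{A}^{k}*\textsf{X}_{0}\|\leq\|\varphi(\textsf{A})\|_{2}^{k}\|\textsf{X}_{0}\|\to 0$ directly as $k\to\infty$, obtaining asymptotic stability as a pure contraction argument.
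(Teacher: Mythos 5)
Your proof is correct and follows essentially the same route as the paper's: both identify $\sum_{\alpha}(\gamma_{\alpha}^{(n)})^{2}$ with $\|\textsf{A}\|^{2}=\|\varphi(\textsf{A})\|^{2}$ (the paper cites Property 8 of De Lathauwer et al.\ where you rederive it from the HOSVD; note the slice decomposition $\|\textsf{S}\|^{2}=\sum_{\alpha}\|\textsf{S}_{j_{n}=\alpha}\|^{2}$ holds for any tensor and does not actually need all-orthogonality), then bound the spectral radius of $\varphi(\textsf{A})$ by its Frobenius norm and invoke \cref{pro:3.5}. No gaps.
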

\begin{proof}
Without loss of generality, suppose that $n=1$. Based on Property 8 in \cite{doi:10.1137/S0895479896305696}, 
$
\sum_{j=1}^{J_1} (\gamma_{j}^{(1)})^2=\|\textsf{A}\|^2=\|\varphi(\textsf{A})\|^2.
$
In addition, we know that the magnitude of the maximal eigenvalue of a matrix is less than or equal to its Frobenius norm. Hence, the proof follows immediately from \cref{pro:3.5}.
\end{proof}

\begin{corollary}\label{cor:5.3}
Suppose that the CPD of \textsf{A} is provided and its factor matrices $\textbf{A}^{(n)}$ and $\textbf{A}^{(m)}$ have all the column vectors orthonormal for at least one odd $n$ and even $m$. For an unforced MLTI system, the equilibrium point $\textsf{X}=\textsf{O}$ is asymptotically stable if the first weight element $\lambda_1<1$. 
\end{corollary}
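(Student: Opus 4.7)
The plan is to reduce the claim to a matrix spectral radius statement via the isomorphism $\varphi$, exploiting the fact that the stated orthonormality turns the CPD of $\textsf{A}$ into a singular value decomposition of $\varphi(\textsf{A})$.

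First I would apply $\varphi$ to the CPD of $\textsf{A}$. Writing
\begin{equation*}
\textsf{A} = \sum_{r=1}^{R}\lambda_r\, \textbf{a}_r^{(1)}\circ \textbf{a}_r^{(2)}\circ \dots \circ \textbf{a}_r^{(2N)},
\end{equation*}
where the odd-indexed factors correspond to the $j_n$-slots and the even-indexed factors to the $i_n$-slots of the paired tensor, a direct index computation from \cref{def:1} gives
\begin{equation*}
\varphi(\textsf{A}) = \sum_{r=1}^{R} \lambda_r\, \textbf{u}_r \textbf{v}_r^{\top}, \qquad \textbf{u}_r = \bigotimes_{n=1}^{N} \textbf{a}_r^{(2n-1)},\ \textbf{v}_r = \bigotimes_{n=1}^{N} \textbf{a}_r^{(2n)},
\end{equation*}
where $\otimes$ denotes the Kronecker product. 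Since each $\textbf{a}_r^{(k)}$ is a unit vector, $\|\textbf{u}_r\| = \|\textbf{v}_r\| = 1$.

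Next I would use the orthonormality hypothesis. By the Kronecker identity $\langle \textbf{a}\otimes \textbf{b}, \textbf{c}\otimes \textbf{d}\rangle = \langle \textbf{a},\textbf{c}\rangle\langle \textbf{b},\textbf{d}\rangle$, applied $N$ times,
\begin{equation*}
\textbf{u}_r^{\top}\textbf{u}_s = \prod_{n=1}^{N}\langle \textbf{a}_r^{(2n-1)},\textbf{a}_s^{(2n-1)}\rangle.
\end{equation*}
If columns of $\textbf{A}^{(n_0)}$ are orthonormal for some odd index $n_0 = 2k-1$, then the factor at $n = k$ is $\delta_{rs}$, which forces the entire product to equal $\delta_{rs}$. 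Hence $\{\textbf{u}_r\}_{r=1}^{R}$ is orthonormal. An identical argument using the even index $m$ shows $\{\textbf{v}_r\}_{r=1}^{R}$ is orthonormal.

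Third, I would recognize that the expansion $\varphi(\textsf{A}) = \sum_{r=1}^{R}\lambda_r \textbf{u}_r\textbf{v}_r^{\top}$ with orthonormal left and right systems and $\lambda_1\geq \lambda_2 \geq \dots \geq \lambda_R > 0$ is precisely a (thin) singular value decomposition of $\varphi(\textsf{A})$. Consequently $\|\varphi(\textsf{A})\|_2 = \lambda_1$. Combining with the standard matrix inequality $\rho(\textbf{M}) \leq \|\textbf{M}\|_2$ and the fact that, via $\varphi$, the U-eigenvalues of $\textsf{A}$ coincide with the ordinary eigenvalues of $\varphi(\textsf{A})$, one obtains $\max_j |\lambda_j^{U}| \leq \lambda_1 < 1$, and asymptotic stability follows from \cref{pro:3.5}.

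The only subtle step is the second one: verifying that a single odd and single even orthonormal factor matrix suffice to orthonormalize the full Kronecker families $\{\textbf{u}_r\}$ and $\{\textbf{v}_r\}$. This is where the hypothesis is used in an essential way, since without it the decomposition of $\varphi(\textsf{A})$ would fail to be an SVD and the bound $\|\varphi(\textsf{A})\|_2 = \lambda_1$ would not be available; this is also the reason one cannot drop to the weaker Frobenius bound $\|\textsf{A}\| = \sqrt{\sum_r \lambda_r^2}$, which need not be less than one even when $\lambda_1 < 1$.
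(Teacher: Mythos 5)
Your proposal is correct and follows essentially the same route as the paper: the paper unfolds the CPD as $\varphi(\textsf{A}) = (\textbf{A}^{(2N-1)}\odot\dots\odot \textbf{A}^{(1)})\,\textbf{S}\,(\textbf{A}^{(2N)}\odot\dots\odot \textbf{A}^{(2)})^{\top}$ and invokes a Khatri--Rao lemma stating that a single orthonormal factor matrix orthonormalizes all columns of the product, which is exactly your $\delta_{rs}$ computation on the Kronecker-product columns $\textbf{u}_r,\textbf{v}_r$. The conclusion via $\rho(\varphi(\textsf{A}))\leq\sigma_{\max}(\varphi(\textsf{A}))=\lambda_1<1$ and \cref{pro:3.5} is identical.
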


The proof of \cref{cor:5.3} is presented in \cref{app:2.3}.

\begin{corollary}\label{coro:3.7}
Suppose that the TTD of $\tilde{\textsf{A}}\in\mathbb{R}^{J_1\times \dots \times J_N\times J_1\times \dots \times J_N}$, defined in \cref{pro:2.96}, is provided with the first $N-1$ core tensors left-orthonormal and the last $N$ core tensors right-orthonormal. For an unforced MLTI system, the equilibrium point $\textsf{X}=\textsf{O}$ is asymptotically stable if the largest singular value of
$
\bar{\tilde{\textbf{A}}}^{(N)}
$
is less than one, where $\bar{\tilde{\textbf{A}}}^{(N)}=\texttt{reshape}(\tilde{\textsf{A}}^{(N)},R_{N-1}J_N,R_N)$.
\end{corollary}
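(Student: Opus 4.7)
The plan is to show that the largest singular value of $\bar{\tilde{\textbf{A}}}^{(N)}$ coincides with the spectral norm $\|\varphi(\textsf{A})\|_2$ of the unfolded matrix, and then to dominate the spectral radius of $\varphi(\textsf{A})$ by this spectral norm so that \cref{pro:3.5} applies.

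First, I would observe that by the very construction of $\tilde{\textsf{A}}$ in \cref{pro:2.96}, the matricization of $\tilde{\textsf{A}}$ that groups its first $N$ modes into the row index and its last $N$ modes into the column index is exactly $\varphi(\textsf{A})\in\mathbb{R}^{\Pi_{\mathcal{J}}\times \Pi_{\mathcal{J}}}$. Using the TTD (\ref{train}) of $\tilde{\textsf{A}}$, this matricization factors (up to index-ordering permutations that do not alter singular values) as
\begin{equation*}
\varphi(\textsf{A}) \;=\; \textbf{L}\,\bar{\tilde{\textbf{A}}}^{(N)}\,\textbf{R},
\end{equation*}
where $\textbf{L}\in\mathbb{R}^{\Pi_{\mathcal{J}}\times R_{N-1}J_N}$ is obtained by contracting the first $N-1$ cores together with the identity on the $J_N$ leg, and $\textbf{R}\in\mathbb{R}^{R_N\times \Pi_{\mathcal{J}}}$ is obtained by contracting the last $N$ cores.

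Next, I would propagate the core-level orthogonality to the assembled factors. A standard inductive argument on left-orthonormal cores shows that $\textbf{L}^\top\textbf{L}=\textbf{I}_{R_{N-1}J_N}$ whenever cores $1,\ldots,N-1$ are left-orthonormal, and symmetrically $\textbf{R}\textbf{R}^\top=\textbf{I}_{R_N}$ whenever cores $N+1,\ldots,2N$ are right-orthonormal. Since $\textbf{L}$ is column-orthonormal and $\textbf{R}$ is row-orthonormal, multiplication by $\textbf{L}$ on the left and by $\textbf{R}$ on the right preserves non-zero singular values, so $\sigma_1(\varphi(\textsf{A}))=\sigma_1(\bar{\tilde{\textbf{A}}}^{(N)})$.

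Finally, I would combine the standard bound $\rho(\varphi(\textsf{A}))\le \|\varphi(\textsf{A})\|_2=\sigma_1(\bar{\tilde{\textbf{A}}}^{(N)})$ on the spectral radius with the fact that the U-eigenvalues of $\textsf{A}$ are precisely the eigenvalues of $\varphi(\textsf{A})$ (by the isomorphism in \cref{def:1} and \cref{prop:TEVD}). If $\sigma_1(\bar{\tilde{\textbf{A}}}^{(N)})<1$, then $|\lambda_j|<1$ for every U-eigenvalue of $\textsf{A}$, and \cref{pro:3.5} gives asymptotic stability.

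The main obstacle is the second step: the precise index bookkeeping needed to write the matricization as $\textbf{L}\,\bar{\tilde{\textbf{A}}}^{(N)}\,\textbf{R}$ with $\textbf{L}$ column-orthonormal and $\textbf{R}$ row-orthonormal. The result is standard in the TT-SVD literature \cite{doi:10.1137/090752286,Klus_2018}, but one has to verify that the particular permutation used to define $\tilde{\textsf{A}}$ in \cref{pro:2.96} is compatible with the $\varphi$ unfolding so that the orthogonal factors assembled from the outer cores indeed cancel from both sides when computing $\sigma_1$.
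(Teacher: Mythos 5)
Your proposal is correct and follows essentially the same route as the paper: identify the singular values of $\bar{\tilde{\textbf{A}}}^{(N)}$ with those of $\varphi(\textsf{A})$, bound the spectral radius by the largest singular value, and invoke \cref{pro:3.5}. The only difference is that the paper simply cites \cite{Klus_2018} for the fact that orthonormalizing the outer cores makes the reshaped central core carry the singular values of the full unfolding, whereas you sketch the $\textbf{L}\,\bar{\tilde{\textbf{A}}}^{(N)}\,\textbf{R}$ factorization argument behind that citation.
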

\begin{proof}
Based on the results of \cite{Klus_2018}, the singular values of $\bar{\tilde{\textbf{A}}}^{(N)}$ are the singular values of $\varphi(\textsf{A})$. In addition, we know that the magnitude of the maximal eigenvalue of a matrix is less than or equal to its largest singular value. Hence, the proof follows immediately from \cref{pro:3.5}.
\end{proof}

\textit{Remark.}
Although \cref{pro:3.5} offers strong stability results for unforced MLTI systems, computing U-eigenvalues usually requires an order of $\mathcal{O}(\Pi_{\mathcal{J}}^3)$ number of operations through tensor unfolding and matrix eigenvalue decomposition. To the contrary, \cref{cor:5.2,cor:5.3,coro:3.7} can be used to determine the stability of  MLTI systems much faster. In particular, if the TTD of $\tilde{\textsf{A}}$ is provided, the time complexity of left- and right-orthonormalization is about $\mathcal{O}(NJR^3)$ assuming that $J_n=J$ for all $n$, and $R$ is the average of the TT-ranks of $\tilde{\textsf{A}}$ \cite{doi:10.1137/090752286}. Moreover, truncating the TT-rank $\tilde{R}_N$ of $\tilde{\textsf{A}}$ would not alter the largest singular values of $\bar{\tilde{\textbf{A}}}^{(N)}$. Therefore, setting $\tilde{R}_N=1$ and computing the vector 2-norm of $\bar{\tilde{\textbf{A}}}^{(N)}$ will return the largest singular value of $\varphi(\textsf{A})$.

\subsection{Reachability}\label{sec:5.2}
In this and the following subsections, we introduce the definitions of reachability and observability for MLTI systems which are similar to analogous concepts for the LTI systems \cite{doi:10.1137/1.9781611973884, Kailath_1980, Rugh:1996:LST:225486}. We then establish sufficient and necessary conditions for reachability and observability for MLTI systems.

\begin{definition}
 The MLTI system (\ref{eq11}) is said to be reachable on $[t_0,t_1]$ if, given any initial condition $\textsf{X}_0$ and any final state $\textsf{X}_1$, there exists a sequence of inputs $\textsf{U}_t$ that steers the state of the system from $\textsf{X}_{t_0}=\textsf{X}_0$ to $\textsf{X}_{t_1}=\textsf{X}_1$.
\end{definition}

\begin{theorem}\label{thm31}
 The pair $(\textsf{A},\textsf{B})$ is reachable on $[t_0,t_1]$ if and only if the reachability Gramian
\begin{equation}
\textsf{W}_r(t_0,t_1)=\sum_{t=t_0}^{t_1-1}\textsf{A}^{t_1-t-1}*\textsf{B}*\textsf{B}^{\top}*(\textsf{A}^{\top})^{t_1-t-1},
\end{equation}
which is a weakly symmetric even-order square tensor, is U-positive definite.
\end{theorem}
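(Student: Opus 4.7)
The plan is to mirror the classical LTI reachability Gramian proof, exploiting the group isomorphism $\varphi$ of \cref{def:1} where convenient but expressing the key identities directly with the Einstein product. First I would verify the weak symmetry claim: using the identity $(\textsf{X}*\textsf{Y})^\top = \textsf{Y}^\top * \textsf{X}^\top$ (inherited from the isomorphism), each summand $\textsf{A}^{t_1-t-1}*\textsf{B}*\textsf{B}^\top*(\textsf{A}^\top)^{t_1-t-1}$ is its own U-transpose, and the sum inherits the property. This also tells us $\varphi(\textsf{W}_r)$ is a symmetric positive semidefinite matrix, so U-positive definiteness is equivalent to $\varphi(\textsf{W}_r)$ being invertible.

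For the ``if'' direction, suppose $\textsf{W}_r$ is U-positive definite so that the U-inverse $\textsf{W}_r^{-1}$ exists. Given any $\textsf{X}_0$ and target $\textsf{X}_1$, I would propose the input
\begin{equation*}
\textsf{U}_t = \textsf{B}^\top * (\textsf{A}^\top)^{t_1-t-1} * \textsf{W}_r^{-1} * \bigl(\textsf{X}_1 - \textsf{A}^{t_1-t_0}*\textsf{X}_0\bigr),\quad t=t_0,\dots,t_1-1.
\end{equation*}
Plugging this into the solution formula (\ref{eq:31}) at time $t_1$, the sum $\sum_{t=t_0}^{t_1-1} \textsf{A}^{t_1-t-1}*\textsf{B}*\textsf{U}_t$ telescopes to $\textsf{W}_r*\textsf{W}_r^{-1}*(\textsf{X}_1-\textsf{A}^{t_1-t_0}*\textsf{X}_0)$, giving $\textsf{X}_{t_1}=\textsf{X}_1$.

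For the ``only if'' direction I would argue by contraposition. If $\textsf{W}_r$ is not U-positive definite, then since $\varphi(\textsf{W}_r)$ is symmetric positive semidefinite there exists a nonzero $\textsf{Z}\in\mathbb{R}^{J_1\times\dots\times J_N}$ with $\textsf{Z}^\top*\textsf{W}_r*\textsf{Z}=0$. Expanding via the definition of $\textsf{W}_r$ yields
\begin{equation*}
\sum_{t=t_0}^{t_1-1}\bigl\|\textsf{B}^\top*(\textsf{A}^\top)^{t_1-t-1}*\textsf{Z}\bigr\|^2=0,
\end{equation*}
forcing $\textsf{Z}^\top*\textsf{A}^{t_1-t-1}*\textsf{B}=\textsf{O}$ for every $t$. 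Choosing $\textsf{X}_0=\textsf{O}$ and target $\textsf{X}_1=\textsf{Z}$, the inner product $\langle \textsf{Z},\textsf{X}_{t_1}\rangle$ vanishes identically in the inputs while $\langle \textsf{Z},\textsf{Z}\rangle>0$, so $\textsf{Z}$ is unreachable and the system is not reachable on $[t_0,t_1]$.

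The main obstacle I anticipate is bookkeeping the Einstein-product identities — in particular $(\textsf{X}*\textsf{Y})^\top=\textsf{Y}^\top*\textsf{X}^\top$ and the self-adjoint expansion $\textsf{Z}^\top*\textsf{W}_r*\textsf{Z}=\sum_t\|\textsf{B}^\top*(\textsf{A}^\top)^{t_1-t-1}*\textsf{Z}\|^2$. These are all transported from the matrix setting by $\varphi$, and an even shorter route is to apply $\varphi$ to the entire MLTI system to reduce to the classical LTI reachability Gramian criterion on $(\varphi(\textsf{A}),\varphi(\textsf{B}))$, invoking $\varphi$-invariance of U-positive definiteness to conclude; the direct tensor argument above is preferable, however, because it makes the constructive nature of the input sequence transparent.
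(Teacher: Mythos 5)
Your proposal is correct and follows essentially the same route as the paper's proof: the same Gramian-inverse-based input $\textsf{U}_t=\textsf{B}^\top*(\textsf{A}^\top)^{t_1-t-1}*\textsf{W}_r^{-1}*\textsf{V}$ for sufficiency, and the same orthogonality argument (a nonzero tensor annihilating $\textsf{A}^{t_1-t-1}*\textsf{B}$ for all $t$) for necessity. You even make explicit a step the paper elides, namely deriving that annihilation from $\textsf{Z}^\top*\textsf{W}_r*\textsf{Z}=0$ via the sum-of-squared-norms expansion.
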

\begin{proof}
Suppose $\textsf{W}_r(t_0,t_1)$ is U-positive definite, and let $\textsf{X}_0$ be the initial state and $\textsf{X}_1$ be the desired final state. Choose $\textsf{U}_t=\textsf{B}^{\top}*(\textsf{A}^{\top})^{t_1-t-1}*\textsf{W}_r^{-1}(t_0,t_1)*\textsf{V}$ for some constant tensor $\textsf{V}$. It follows from the solution of the system (\ref{eq11}) that
$
\textsf{X}_{t_1}= \textsf{A}^{t_1}* \textsf{X}_0+\sum_{j=0}^{t_1-1}\textsf{A}^{t_1-j-1}*\textsf{B}*\textsf{U}_t
= \textsf{A}^{t_1}* \textsf{X}_0+\textsf{W}_r(t_0,t_1)*\textsf{W}_r^{-1}(t_0,t_1)*\textsf{V}
= \textsf{A}^{t_1} *\textsf{X}_0+\textsf{V}.
$
Taking $\textsf{V}=-\textsf{A}^{t_1}* \textsf{X}_0+\textsf{X}_1$, we have $\textsf{X}_{t_1}=\textsf{X}_1$.

We show the converse by contradiction. Suppose $\textsf{W}_r(t_0,t_1)$ is not U-positive definite. Then there exists $\textsf{X}_a\neq \textsf{O}$ such that $\textsf{X}_a^\top*\textsf{A}^{t_1-t-1}*\textsf{B}=\textsf{O}$ for any $t$. Take $\textsf{X}_1=\textsf{X}_a+\textsf{A}^{t_1}*\textsf{X}_0$, and it follows that
$
\textsf{X}_a+\textsf{A}^{t_1}*\textsf{X}_0=\textsf{A}^{t_1}*\textsf{X}_0+\sum_{j=t_0}^{t_1-1}\textsf{A}^{t_1-j-1}*\textsf{B}*\textsf{U}_j\,.
$
Multiplying from the left by $\textsf{X}_a^\top$ yields
$
\textsf{X}_a^\top*\textsf{X}_a=\sum_{j=t_0}^{t_1-1}\textsf{X}_a^\top*\textsf{A}^{t_1-j-1}*\textsf{B}*\textsf{U}_j=0,
$
which implies that $\textsf{X}_a=\textsf{O}$, a contradiction. %So $\mathcal{W}_c(t_0,t_1)$ is U-positive definite, and the proof is thus completed.
\end{proof}

\begin{corollary}
If the reachability Gramian $\textsf{W}_r(t_0,t_1)$ is not M-positive definite, the pair $(\textsf{A},\textsf{B})$ is not reachable on $[t_0,t_1]$. 
\end{corollary}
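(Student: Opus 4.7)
The plan is to reduce the claim to the previously established Theorem via the contrapositive, using the relationship between U-positive definiteness and M-positive definiteness advertised in Appendix 1.

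First, I would recall Theorem \ref{thm31}, which asserts the equivalence ``$(\textsf{A},\textsf{B})$ reachable on $[t_0,t_1]$ $\iff$ $\textsf{W}_r(t_0,t_1)$ is U-positive definite''. Taking the contrapositive, if $\textsf{W}_r(t_0,t_1)$ fails to be U-positive definite, then $(\textsf{A},\textsf{B})$ is not reachable on $[t_0,t_1]$. So it suffices to show that M-positive definiteness is a necessary condition implied by U-positive definiteness.

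Second, I would invoke the fact proved in Appendix \ref{app:1} that U-positive definiteness generalizes M-positive definiteness: since M-positive definiteness requires $\textsf{X}^{\top}*\textsf{A}*\textsf{X}>0$ only for rank-one tensors $\textsf{X}=\textbf{x}_1\circ\textbf{x}_2\circ\cdots\circ\textbf{x}_N$, while U-positive definiteness demands the same inequality for every nonzero $N$-th order tensor $\textsf{X}$, the implication ``U-positive definite $\Rightarrow$ M-positive definite'' is immediate by restriction of the test set. Consequently, if $\textsf{W}_r(t_0,t_1)$ is not M-positive definite, it cannot be U-positive definite either.

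Combining the two steps yields the desired conclusion: failure of M-positive definiteness of the (weakly symmetric) reachability Gramian forces failure of U-positive definiteness, hence non-reachability. There is no real obstacle here; the entire statement is a direct contrapositive corollary of Theorem \ref{thm31} together with the implicational hierarchy among the tensor positive-definiteness notions established in Appendix \ref{app:1}.
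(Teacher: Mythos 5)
Your argument is correct and matches the paper's proof exactly: the paper cites \cref{pro:2.7} (U-positive definite $\Rightarrow$ M-positive definite, proved by observing that the M-definiteness test set consists of rank-one tensors) together with \cref{thm31}, and concludes by the same contrapositive. Nothing further is needed.
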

\begin{proof}
The proof follows immediately from \cref{pro:2.7,thm31}.
\end{proof}

The reachability Gramian assesses to what degree each state is affected by an input \cite{doi:10.1142/S0218127405012429}. The infinite horizon reachability Gramian can be computed from the tensor Lyapunov equation which is defined by
\begin{equation}
\textsf{W}_r - \textsf{A}*\textsf{W}_r*\textsf{A}^\top = \textsf{B}*\textsf{B}^\top.\label{eq:3.6}
\end{equation}
By the unfolding property, if the pair $(\textsf{A},\textsf{B})$ is reachable over an infinite horizon and all the U-eigenvalues of $\textsf{A}$ have magnitude less than one, one can show that there exists a unique weakly symmetric U-positive definite solution $\textsf{W}_r$. Solving the infinite horizon reachability Gramian from the tensor Lyapunov equation may be computationally intensive, so a tensor version of the Kalman rank condition is also provided.

\begin{proposition}\label{coro3.1}
The pair $(\textsf{A},\textsf{B})$ is reachable if and only if the $J_1\times J_1K_1\times \dots \times J_N\times  J_NK_N$ even-order reachability tensor
\begin{equation}\label{eq:3.7}
\mathscr{R}=
\begin{vmatrix}
\textsf{B} & \textsf{A}*\textsf{B} & \dots & \textsf{A}^{\Pi_{\mathcal{J}}-1}*\textsf{B}
\end{vmatrix}
\end{equation}
spans $\mathbb{R}^{J_1\times J_2\times\dots \times J_N}$. In other words, $\text{rank}_U(\mathscr{R})=\Pi_{\mathcal{J}}$.
\end{proposition}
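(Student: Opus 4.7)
The plan is to reduce the statement to the classical Kalman rank condition for LTI systems via the isomorphism $\varphi$. First I would note that because $\varphi$ is a bijective group isomorphism between $(\mathbb{G}_{J_1J_1\dots J_NJ_N}(\mathbb{R}),*)$ and $(\text{GL}(\Pi_{\mathcal{J}},\mathbb{R}),\cdot)$ that distributes over the block tensor construction, applying $\varphi$ to (\ref{eq11}) yields an equivalent LTI system $\textbf{x}_{t+1}=\varphi(\textsf{A})\textbf{x}_{t}+\varphi(\textsf{B})\textbf{u}_{t}$ with $\textbf{x}_t=\varphi(\textsf{X}_t)$ and $\textbf{u}_t=\varphi(\textsf{U}_t)$, where $\varphi(\textsf{B})\in\mathbb{R}^{\Pi_{\mathcal{J}}\times \Pi_{\mathcal{K}}}$ (here $\varphi$ is used with the natural extension from even-order paired tensors to the $\textsf{B}$ and $\textsf{U}$ variables). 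Because reachability is defined purely in terms of whether initial/final states can be connected by some input trajectory, and the map between $(\textsf{X}_t,\textsf{U}_t)$ and $(\textbf{x}_t,\textbf{u}_t)$ is a bijection, the MLTI pair $(\textsf{A},\textsf{B})$ is reachable if and only if the matrix pair $(\varphi(\textsf{A}),\varphi(\textsf{B}))$ is reachable.

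Next I would invoke the classical Kalman rank condition, which states that $(\varphi(\textsf{A}),\varphi(\textsf{B}))$ is reachable if and only if the reachability matrix
\begin{equation*}
\textbf{R}=\begin{bmatrix}\varphi(\textsf{B}) & \varphi(\textsf{A})\varphi(\textsf{B}) & \cdots & \varphi(\textsf{A})^{\Pi_{\mathcal{J}}-1}\varphi(\textsf{B})\end{bmatrix}
\end{equation*}
has rank $\Pi_{\mathcal{J}}$, noting that by the tensor Cayley--Hamilton theorem (available through the isomorphism), higher powers of $\textsf{A}$ do not contribute anything new. The remaining task is to identify $\text{rank}(\textbf{R})$ with $\text{rank}_U(\mathscr{R})$. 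Here I would use two facts already established in the paper: the Einstein product is sent to matrix multiplication under $\varphi$, so $\varphi(\textsf{A}^k*\textsf{B})=\varphi(\textsf{A})^k\varphi(\textsf{B})$; and the blocks of a mode row block tensor map to contiguous column blocks of the unfolding under $\varphi$ up to row/column permutations (as noted in Section~\ref{sec:4.1} and \cref{app:2}). Consequently $\varphi(\mathscr{R})$ coincides with $\textbf{R}$ up to a permutation of its columns, so the ranks agree.

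Combining the pieces: $(\textsf{A},\textsf{B})$ reachable $\iff$ $(\varphi(\textsf{A}),\varphi(\textsf{B}))$ reachable $\iff$ $\text{rank}(\textbf{R})=\Pi_{\mathcal{J}}$ $\iff$ $\text{rank}(\varphi(\mathscr{R}))=\Pi_{\mathcal{J}}$ $\iff$ $\text{rank}_U(\mathscr{R})=\Pi_{\mathcal{J}}$, which is the claim that $\mathscr{R}$ spans $\mathbb{R}^{J_1\times J_2\times\dots\times J_N}$ in the unfolding sense.

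The main obstacle I anticipate is the bookkeeping in the second paragraph, namely verifying precisely that the generalized mode row block tensor construction used in (\ref{eq:3.7}), with the particular choice of $K_n$, unfolds under $\varphi$ to the correct column-block matrix $\textbf{R}$ (rather than to some row-block or misaligned object), since this uses the nonstandard block tensor construction and the permutation-equivalence claim from \cref{app:2}. Once that compatibility is pinned down, everything else reduces to the classical LTI argument applied to the unfolded system, so no new tensor-specific analysis beyond the isomorphism $\varphi$ and tensor Cayley--Hamilton is required.
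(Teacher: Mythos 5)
Your proposal is correct, and it reaches the same conclusion by a slightly different route than the paper. The paper's (very terse) proof argues at the tensor level: it invokes the block-tensor identity $\begin{vmatrix}\textsf{A} & \textsf{B}\end{vmatrix}_n * \begin{vmatrix}\textsf{C}\\ \textsf{D}\end{vmatrix}_n = \textsf{A}*\textsf{C}+\textsf{B}*\textsf{D}$ from \cref{pro:2.9} to write the forced part of the solution (\ref{eq:31}) as $\mathscr{R}*\textsf{U}$ for a suitable column block tensor of inputs $\textsf{U}$, so that reachability becomes surjectivity of $\textsf{U}\mapsto \mathscr{R}*\textsf{U}$, i.e. $\text{rank}_U(\mathscr{R})=\Pi_{\mathcal{J}}$, with the tensor Cayley--Hamilton theorem justifying the truncation at power $\Pi_{\mathcal{J}}-1$. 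You instead push everything through $\varphi$ first, inherit the classical Kalman condition for $(\varphi(\textsf{A}),\varphi(\textsf{B}))$ wholesale, and then transfer the rank statement back using the fact that the mode row block construction unfolds to a column-permuted $\begin{bmatrix}\varphi(\textsf{B}) & \cdots & \varphi(\textsf{A})^{\Pi_{\mathcal{J}}-1}\varphi(\textsf{B})\end{bmatrix}$. Both are sound and both ultimately rest on the same two ingredients (the isomorphism and Cayley--Hamilton); your version avoids redoing the classical surjectivity argument but shifts the burden onto the block-unfolding bookkeeping, which you rightly flag: \cref{pro6} is only worked out for single $n$-mode concatenation with $N=2$, so you would need to iterate the permutation-equivalence through the generalized multi-mode construction (the paper's remark that ``any choice of construction for the mode row block tensor works'' and that rank is preserved under block unfolding is exactly the fact you need, and it does hold since permuting columns does not change rank). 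The paper's intrinsic route sidesteps that bookkeeping at the cost of quietly reusing the classical Gramian/spanning argument in tensor clothing.
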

\begin{proof}
The proof follows from \cref{pro:2.9} and the generalized Cayley Hamilton theorem discussed in the tensor eigenvalue problem.
\end{proof}

First, any choice of construction for the mode row block tensor works for the reachability tensor. Second, when $N=1$, \cref{coro3.1} simplifies to the famous Kalman rank condition for reachability of LTI systems. The following corollaries involving with HOSVD (multilinear ranks), CPD (CP rank) and TTD (TT-ranks)  provide useful necessary or sufficient conditions for reachability of MLTI systems if the reachability tensor $\mathscr{R}$ is given in the HOSVD, CPD or TTD format.

\begin{corollary}\label{coro:3.10}
Given the reachability tensor $\mathscr{R}$ in (\ref{eq:3.7}), if $\text{rank}_{2n-1}(\mathscr{R})\neq J_n$ for some $n$, the pair $(\textsf{A},\textsf{B})$ is not reachable.
\end{corollary}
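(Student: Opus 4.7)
The plan is to deduce the corollary directly by contraposition from two previously established results, \cref{coro3.1} and \cref{pro:2.95}. First I would recall that \cref{coro3.1} identifies reachability of $(\textsf{A},\textsf{B})$ with the full unfolding rank condition $\text{rank}_U(\mathscr{R}) = \Pi_{\mathcal{J}}$ on the reachability tensor. Since $\varphi(\mathscr{R})$ is a $\Pi_{\mathcal{J}} \times \Pi_{\mathcal{J}}\Pi_{\mathcal{K}}$ matrix, $\Pi_{\mathcal{J}}$ is the maximal possible value of $\text{rank}_U(\mathscr{R})$, so the condition is equivalent to $\varphi(\mathscr{R})$ having full row rank.

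Next I would invoke \cref{pro:2.95}, applied to $\mathscr{R}$ viewed as an even-order paired tensor with odd-mode dimensions $J_n$ and even-mode dimensions $J_n K_n$. That result asserts that $\text{rank}_U(\mathscr{R}) = \Pi_{\mathcal{J}}$ forces $\text{rank}_{2n-1}(\mathscr{R}) = J_n$ for every $n$. The contrapositive is precisely what the corollary requires: if some odd-mode matricization is rank deficient, i.e.\ $\text{rank}_{2n-1}(\mathscr{R}) \neq J_n$, then $\text{rank}_U(\mathscr{R}) < \Pi_{\mathcal{J}}$, and by \cref{coro3.1} the pair $(\textsf{A},\textsf{B})$ is not reachable.

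The one mild point to watch is the interpretation of the inequality: because the $(2n-1)$-st mode of $\mathscr{R}$ has size $J_n$, its rank is bounded by $J_n$, so ``$\text{rank}_{2n-1}(\mathscr{R}) \neq J_n$'' is equivalent to strict rank deficiency in that mode, which is what we need to feed into the contrapositive. There is no genuine obstacle in the argument, as the corollary is a transcription of the contrapositive chain; its value is computational, since each $\text{rank}_{2n-1}(\mathscr{R})$ is obtained from an SVD of the much smaller matrix $\mathscr{R}_{(2n-1)}$ rather than from unfolding $\varphi(\mathscr{R})$.
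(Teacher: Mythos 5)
Your argument is correct and is exactly the paper's proof: the corollary is the contrapositive of \cref{pro:2.95} (with $\text{rank}_U(\mathscr{R})=\Pi_{\mathcal{J}}$ implying $\text{rank}_{2n-1}(\mathscr{R})=J_n$ for all $n$) combined with the Kalman-type rank criterion of \cref{coro3.1}. Your added remarks on the row-rank interpretation and the bound $\text{rank}_{2n-1}(\mathscr{R})\leq J_n$ are accurate but not needed beyond what the paper's one-line proof already invokes.
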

\begin{proof}
The proof follows immediately from \cref{pro:2.95,coro3.1}.
\end{proof}

\begin{corollary}\label{cor:5.9}
Given the reachability tensor $\mathscr{R}$ in (\ref{eq:3.7}), if the set of $n$-mode singular values of $\mathscr{R}$ obtained from the HOSVD contains zero for odd $n$, the pair $(\textsf{A},\textsf{B})$ is not reachable.
\end{corollary}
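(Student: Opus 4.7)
The plan is to reduce Corollary \ref{cor:5.9} to Corollary \ref{coro:3.10} by using the HOSVD characterization of multilinear ranks from Theorem \ref{thm:2.1}. Specifically, the key fact from De Lathauwer et al., stated just after Theorem \ref{thm:2.1}, is that the number of nonvanishing $n$-mode singular values $\gamma_j^{(n)}$ of a tensor equals its $n$-mode multilinear rank. This is exactly the bridge needed between the hypothesis (a vanishing $n$-mode singular value for some odd $n$) and the rank condition used in Corollary \ref{coro:3.10}.

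First, I would fix an odd index $n = 2k-1$ for which the set of $n$-mode singular values of $\mathscr{R}$ contains a zero. Since $\mathscr{R} \in \mathbb{R}^{J_1 \times J_1 K_1 \times \dots \times J_N \times J_N K_N}$, the $(2k-1)$-mode of $\mathscr{R}$ has dimension $J_k$, so there are exactly $J_k$ singular values $\gamma_1^{(2k-1)}, \dots, \gamma_{J_k}^{(2k-1)}$ in that mode. By the ordering property in Theorem \ref{thm:2.1}, the presence of a zero among them implies $\gamma_{J_k}^{(2k-1)} = 0$, hence strictly fewer than $J_k$ of them are nonvanishing. Applying the De Lathauwer correspondence between nonvanishing $n$-mode singular values and the $n$-mode multilinear rank yields $\text{rank}_{2k-1}(\mathscr{R}) < J_k$, and in particular $\text{rank}_{2k-1}(\mathscr{R}) \neq J_k$.

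Finally, I would invoke Corollary \ref{coro:3.10} directly: since the $(2k-1)$-mode multilinear rank of $\mathscr{R}$ fails to attain $J_k$ for this choice of $k$, the pair $(\textsf{A}, \textsf{B})$ cannot be reachable. This closes the argument.

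There is no real obstacle here; the corollary is essentially a translation of Corollary \ref{coro:3.10} into HOSVD language. The only subtlety worth flagging in the write-up is the index bookkeeping, namely aligning the $(2k-1)$-th mode of the paired tensor $\mathscr{R}$ (whose odd modes have sizes $J_1, \dots, J_N$) with the index $k$ appearing in the reachability criterion $\text{rank}_{2k-1}(\mathscr{R}) = J_k$; once that correspondence is stated, the rest is immediate.
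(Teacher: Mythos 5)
Your proposal is correct and follows essentially the same route as the paper: both use the De Lathauwer correspondence between the number of nonvanishing $n$-mode singular values and the $n$-mode multilinear rank, and then reduce to \cref{coro:3.10}. Your version just spells out the index bookkeeping and the ordering property of \cref{thm:2.1} that the paper leaves implicit.
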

\begin{proof}
We know that the number of nonvanishing $n$-mode singular values equals to its corresponding $n$-mode multilinear rank. Hence, the result follows immediately from \cref{coro3.1,coro:3.10}.
\end{proof}

\begin{corollary}\label{coro:3.12}
Given the reachability tensor $\mathscr{R}$ in (\ref{eq:3.7}), if the CPD of $\mathscr{R}$ satisfies (\ref{eq:2.17}) with CP rank equal to $\Pi_{\mathcal{J}}$, the pair $(\textsf{A},\textsf{B})$ is reachable. Conversely, if the pair $(\textsf{A},\textsf{B})$ is reachable, then the CP rank of $\mathscr{R}$ is greater than or equal to $\Pi_{\mathcal{J}}$.
\end{corollary}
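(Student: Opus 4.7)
The plan is to deduce both directions of the corollary directly from the rank relation in \cref{pro:2.11} combined with the reachability rank criterion of \cref{coro3.1}, together with the general inequality that the CP rank dominates any unfolding matrix rank (noted in the discussion following the CPD in \cref{sec:4.2}).

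For the forward direction, I would start by observing that the reachability tensor $\mathscr{R}$ is an even-order paired tensor (its construction via mode row block tensors preserves the pairwise index structure). Assuming its CPD has CP rank equal to $\Pi_{\mathcal{J}}$ and that the $k$-rank conditions in (\ref{eq:2.17}) are satisfied, I would directly apply \cref{pro:2.11} with $R = \Pi_{\mathcal{J}}$ to conclude $\text{rank}_U(\mathscr{R}) = \Pi_{\mathcal{J}}$. By \cref{coro3.1}, this unfolding rank condition is precisely the criterion for reachability, so the pair $(\textsf{A}, \textsf{B})$ is reachable.

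For the converse direction, I would argue contrapositively in spirit but directly in form: assume $(\textsf{A}, \textsf{B})$ is reachable. Then \cref{coro3.1} yields $\text{rank}_U(\mathscr{R}) = \Pi_{\mathcal{J}}$, i.e.\ $\text{rank}(\varphi(\mathscr{R})) = \Pi_{\mathcal{J}}$. Since $\varphi$ is a particular tensor unfolding (see the paragraph after \cref{def:1}) and the paper has already established in \cref{sec:4.2} that the CP rank of a tensor is always greater than or equal to the rank of any of its unfolding matrices, it follows immediately that the CP rank of $\mathscr{R}$ is at least $\Pi_{\mathcal{J}}$.

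There is essentially no hard step here: both halves are one-line consequences of previously established machinery. The main conceptual point to be careful about is that the forward direction requires the extra hypothesis (\ref{eq:2.17}), while the converse does not — this asymmetry reflects the fact that CP rank need not equal unfolding rank in general; the $k$-rank conditions are what guarantees the equality in one direction, whereas the inequality $\text{CP rank} \geq \text{rank}_U$ always holds. If any obstacle arises, it would be only in verifying that the construction of $\mathscr{R}$ as a mode row block tensor indeed produces a valid even-order paired tensor to which \cref{pro:2.11} applies, but this follows from the block tensor machinery of \cref{sec:4.1}.
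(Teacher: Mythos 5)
Your proposal is correct and matches the paper's proof exactly: the forward direction follows from \cref{pro:2.11} together with the rank criterion in \cref{coro3.1}, and the converse from the fact that the CP rank dominates the unfolding rank. Your additional remark on why the $k$-rank hypothesis (\ref{eq:2.17}) is needed only in one direction is accurate and a helpful clarification, though the paper does not spell it out.
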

\begin{proof}
The first part of the proof follows immediately from \cref{pro:2.11,coro3.1}. The second part of the proof follows from the fact that the CP rank of a tensor is greater than or equal to its unfolding rank.
\end{proof}

\begin{corollary}\label{coro:3.13}
Given the reachability tensor $\mathscr{R}$ in (\ref{eq:3.7}), the pair $(\textsf{A},\textsf{B})$ is reachable if and only if the $N$-th optimal TT-rank of $\tilde{\mathscr{R}}\in\mathbb{R}^{J_1\times \dots \times J_N\times J_1K_1\times \dots \times J_NK_N}$, defined in \cref{pro:2.96}, is equal to $\Pi_{\mathcal{J}}$.
\end{corollary}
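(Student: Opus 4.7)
The plan is to chain two results already established in the excerpt: \cref{coro3.1}, which characterizes reachability via the unfolding rank of $\mathscr{R}$, and \cref{pro:2.96}, which equates the unfolding rank of an even-order paired tensor to the $N$-th optimal TT-rank of its $\mathbb{S}$-transpose with the pairwise-to-grouped permutation. Since $\mathscr{R}\in\mathbb{R}^{J_1\times J_1K_1\times \dots\times J_N\times J_NK_N}$ is itself an even-order paired tensor (the ``row" modes are the $J_n$'s and the ``column" modes are the $J_nK_n$'s), both results apply directly.

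Concretely, I would proceed as follows. First, invoke \cref{coro3.1} to restate reachability of $(\textsf{A},\textsf{B})$ as the condition $\text{rank}_U(\mathscr{R})=\Pi_{\mathcal{J}}$. Next, apply \cref{pro:2.96} with $\textsf{A}$ replaced by $\mathscr{R}$: the permutation $\mathbb{S}$ that interleaves the odd and even indices sends $\mathscr{R}$ to $\tilde{\mathscr{R}}\in\mathbb{R}^{J_1\times \dots\times J_N\times J_1K_1\times \dots\times J_NK_N}$, and the proposition gives $\text{rank}_U(\mathscr{R})=\tilde R_N$, where $\tilde R_N$ is the $N$-th optimal TT-rank of $\tilde{\mathscr{R}}$. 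Combining the two equivalences yields the claim: reachability holds iff $\tilde R_N=\Pi_{\mathcal{J}}$.

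The only subtlety worth flagging is a consistency check on the dimensions, since the statement writes $\tilde{\mathscr{R}}$ with its second block of modes having sizes $J_1K_1,\dots,J_NK_N$ rather than the pairwise form appearing in $\mathscr{R}$. This is simply because $\mathscr{R}$ was defined as a mode row block tensor whose $n$-th ``column" mode already has size $J_nK_n$, so the $\mathbb{S}$-transpose in \cref{pro:2.96} just regroups, and the unfolding matrix $\varphi(\mathscr{R})\in\mathbb{R}^{\Pi_{\mathcal{J}}\times \Pi_{\mathcal{J}}\Pi_{\mathcal{K}}}$ coincides with the $\texttt{reshape}$ of $\tilde{\mathscr{R}}$ that defines the $N$-th optimal TT-rank. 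No genuine obstacle arises; the corollary is essentially an immediate composition of the two cited results, and the proof is expected to be one or two lines.
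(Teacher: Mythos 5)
Your proposal is correct and matches the paper's proof exactly: the paper also derives this corollary as an immediate combination of \cref{coro3.1} (reachability iff $\text{rank}_U(\mathscr{R})=\Pi_{\mathcal{J}}$) and \cref{pro:2.96} (unfolding rank equals the $N$-th optimal TT-rank of the $\mathbb{S}$-transpose). Your added dimension-consistency check is a sensible clarification but not a departure from the paper's argument.
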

\begin{proof}
The proof follows immediately from \cref{pro:2.96,coro3.1}.
\end{proof}

\textit{Remark.}
Finding the unfolding rank of  the reachability tensor $\mathscr{R}$ through tensor unfolding and matrix QR decomposition is computationally expensive, and has a $\mathcal{O}(\Pi_{\mathcal{J}}^3\Pi_{\mathcal{K}})$ time complexity. However, if the reachability tensor $\mathscr{R}$  is already given in the tensor decomposition format, computing the unfolding rank can be achieved efficiently based on \cref{cor:5.9,coro:3.12,coro:3.13}. Particularly, if the TTD of $\tilde{\mathscr{R}}$ is provided, we do not need any additional computation to obtain the unfolding rank.

\subsection{Observability}\label{sec:5.3}
The results of observability can be simply obtained by the duality principle, similarly to LTI systems. 
\begin{definition}
The MLTI system (\ref{eq9}) is said to be observable on $[t_0,t_1]$ if any initial state $\textsf{X}_{t_0}=\textsf{X}_0$ can be uniquely determined by $\textsf{Y}_t$ on $[t_0,t_1]$.
\end{definition}

\begin{theorem}\label{thm32}
The pair $(\textsf{A},\textsf{C})$ is observable on $[t_0,t_1]$ if and only if the observability Gramian
\begin{equation}
\textsf{W}_o(t_0,t_1)=\sum_{t=t_0}^{t_1-1}(\textsf{A}^{\top})^{t-t_0}*\textsf{C}^{\top}*\textsf{C}*\textsf{A}^{t-t_0},
\end{equation}
which is a weakly symmetric even-order square tensor, is U-positive definite.
\end{theorem}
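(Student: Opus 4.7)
The plan is to mirror the reachability proof (Theorem~\ref{thm31}) using duality, exploiting the fact that the Einstein product, U-transpose, and U-inverse behave formally like their matrix counterparts through the isomorphism $\varphi$.

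First I would reduce to the unforced case. Given any input sequence $\textsf{U}_t$, the solution formula \eqref{eq:31} gives $\textsf{Y}_t = \textsf{C}*\textsf{A}^{t-t_0}*\textsf{X}_0 + \tilde{\textsf{Y}}_t$ where $\tilde{\textsf{Y}}_t$ depends only on $\textsf{A},\textsf{B},\textsf{C}$ and the known inputs. Hence recovering $\textsf{X}_0$ from $\textsf{Y}_t$ on $[t_0,t_1]$ is equivalent to recovering it from $\textsf{Z}_t := \textsf{Y}_t - \tilde{\textsf{Y}}_t = \textsf{C}*\textsf{A}^{t-t_0}*\textsf{X}_0$, so I can work with this homogeneous output throughout.

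For sufficiency, assume $\textsf{W}_o(t_0,t_1)$ is U-positive definite. Since $\varphi$ is a group isomorphism on $\mathbb{G}$, U-positive definiteness of $\textsf{W}_o$ implies that $\varphi(\textsf{W}_o)$ is positive definite, hence invertible, so $\textsf{W}_o^{-1}$ exists. Apply $(\textsf{A}^\top)^{t-t_0}*\textsf{C}^\top$ to $\textsf{Z}_t$ on the left and sum over $t \in [t_0,t_1-1]$ to obtain
\begin{equation*}
\sum_{t=t_0}^{t_1-1}(\textsf{A}^\top)^{t-t_0}*\textsf{C}^\top*\textsf{Z}_t \;=\; \textsf{W}_o(t_0,t_1)*\textsf{X}_0,
\end{equation*}
so that $\textsf{X}_0 = \textsf{W}_o^{-1}(t_0,t_1)*\sum_{t} (\textsf{A}^\top)^{t-t_0}*\textsf{C}^\top*\textsf{Z}_t$ is uniquely determined.

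For necessity I would argue by contrapositive. If $\textsf{W}_o$ is not U-positive definite, then because each summand is of the form $\textsf{M}^\top*\textsf{M}$ (so $\textsf{X}^\top*\textsf{W}_o*\textsf{X} = \sum_t \|\textsf{C}*\textsf{A}^{t-t_0}*\textsf{X}\|^2 \geq 0$ for every $\textsf{X}$), there must exist a nonzero $\textsf{X}_a$ making this sum vanish. Consequently $\textsf{C}*\textsf{A}^{t-t_0}*\textsf{X}_a = \textsf{O}$ for every $t \in [t_0,t_1-1]$, which means the initial states $\textsf{X}_0=\textsf{O}$ and $\textsf{X}_0=\textsf{X}_a$ produce identical output sequences under any common input, violating observability.

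The main obstacle is justifying the nonnegativity/sum-of-squares identity for $\textsf{X}^\top*\textsf{W}_o*\textsf{X}$ and the invertibility of $\textsf{W}_o$; both rely on transporting the calculation through $\varphi$, where $\varphi(\textsf{W}_o) = \sum_t \varphi((\textsf{A}^\top)^{t-t_0}*\textsf{C}^\top)\varphi(\textsf{C}*\textsf{A}^{t-t_0})$ is a genuine Gramian matrix, so standard linear-algebraic facts (positive semidefiniteness, invertibility iff strict positive definiteness, nullspace characterization) transfer directly. The weak symmetry of $\textsf{W}_o$ is immediate from $(\textsf{M}^\top*\textsf{M})^\top = \textsf{M}^\top*\textsf{M}$ applied term-by-term.
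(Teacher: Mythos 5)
Your proof is correct and follows essentially the same route as the paper's: for sufficiency you recover $\textsf{X}_0$ by applying $(\textsf{A}^\top)^{t-t_0}*\textsf{C}^\top$ to the homogeneous output and inverting the Gramian, and for necessity you exhibit a nonzero $\textsf{X}_a$ annihilated by every $\textsf{C}*\textsf{A}^{t-t_0}$ so that $\textsf{O}$ and $\textsf{X}_a$ are indistinguishable. The only differences are cosmetic: you make explicit the reduction to the unforced case and the sum-of-squares identity $\textsf{X}^\top*\textsf{W}_o*\textsf{X}=\sum_t\|\textsf{C}*\textsf{A}^{t-t_0}*\textsf{X}\|^2$ that produces $\textsf{X}_a$, both of which the paper leaves implicit.
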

%\begin{proof}
%The proof follows similar steps as in Theorem \ref{thm31}. 
%\end{proof}
\begin{proof}
Suppose that $\textsf{W}_o(t_0,t_1)$ is U-positive definite and let $\textsf{X}_0$ be the initial state such that $\textsf{Y}_t = \textsf{C}*\textsf{X}_t=\textsf{C}*\textsf{A}^{t-t_0}*\textsf{X}_0$ for any $t\in[t_0,t_1]$. Multiplying from the left by $(\textsf{A}^\top)^{t-t_0}*\textsf{C}^\top$ yields
$
(\textsf{A}^\top)^{t-t_0}*\textsf{C}^\top*\textsf{Y}_t = (\textsf{A}^{\top})^{t-t_0}*\textsf{C}^{\top}*\textsf{C}*\textsf{A}^{t-t_0}*\textsf{X}_0
$, which implies that
$
\sum_{t=t_0}^{t_1-1} (\textsf{A}^\top)^{t-t_0}*\textsf{C}^\top*\textsf{Y}_t  = \sum_{t=t_0}^{t_1-1}(\textsf{A}^{\top})^{t-t_0}*\textsf{C}^{\top}*\textsf{C}*\textsf{A}^{t-t_0}*\textsf{X}_0
=\textsf{W}_o(t_0,t_1)*\textsf{X}_0.
$
Since $\textsf{W}_o(t_0,t_1)$ is U-invertible, this equation has a unique solution
$
\textsf{X}_0=\textsf{W}_o^{-1}(t_0,t_1)\sum_{t=t_0}^{t_1-1} (\textsf{A}^\top)^{t-t_0}*\textsf{C}^\top*\textsf{Y}_t.
$
Hence, $(\textsf{A},\textsf{C})$ is observable on $[t_0,t_1]$.

Again, we show the converse by contradiction. Suppose that $\textsf{W}_o(t_0,t_1)$ is not U-positive definite. Then there exists $\textsf{X}_a\neq \textsf{O}$ such that $\textsf{C}*\textsf{A}^{t-t_0}*\textsf{X}_a=\textsf{O}$ for any $t$. Take $\textsf{X}_{t_0}=\textsf{X}_0+\textsf{X}_a$ for some initial state $\textsf{X}_0$. Then
$
\textsf{Y}_t = \textsf{C}*\textsf{A}^{t-t_0}*\textsf{X}_0+\textsf{C}*\textsf{A}^{t-t_0}*\textsf{X}_a = \textsf{C}*\textsf{A}^{t-t_0}*\textsf{X}_0
$
for any $t\in[t_0,t_1]$. The initial states $\textsf{X}_0$ and $\textsf{X}_0+\textsf{X}_a$ produce the same output, which implies that $(\textsf{A},\textsf{C})$ is not observable on $[t_0,t_1]$, a contradiction.
\end{proof}

\begin{corollary}
If the observability Gramian $\textsf{W}_o(t_0,t_1)$ is not M-positive definite, the pair $(\textsf{A},\textsf{C})$ is not observable on $[t_0,t_1]$.
\end{corollary}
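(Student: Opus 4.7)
The plan is to prove this corollary by contrapositive, leveraging the relationship between U-positive definiteness and M-positive definiteness together with the observability characterization already established in \cref{thm32}. The structure will mirror exactly the corresponding corollary for reachability, whose one-line proof cites \cref{pro:2.7} and \cref{thm31}; here the analog will cite \cref{pro:2.7} and \cref{thm32}.

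The key observation is the remark made just after the definition of U-positive definiteness: U-positive definiteness is a \emph{generalization} of M-positive definiteness (detailed in \cref{app:1} via \cref{pro:2.7}). In particular, \cref{pro:2.7} supplies the implication that every U-positive definite even-order square tensor is M-positive definite; equivalently, M-positive definiteness is a necessary condition for U-positive definiteness.

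Given this, the proof proceeds as follows. Suppose $\textsf{W}_o(t_0,t_1)$ is not M-positive definite. Then by the contrapositive of \cref{pro:2.7}, $\textsf{W}_o(t_0,t_1)$ is not U-positive definite. By \cref{thm32}, U-positive definiteness of the observability Gramian is necessary for observability of $(\textsf{A},\textsf{C})$ on $[t_0,t_1]$, so the pair is not observable on this interval.

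There is no real obstacle here since all the heavy lifting sits in \cref{pro:2.7} (proved in \cref{app:1}) and \cref{thm32}. The only subtlety to guard against is getting the direction of the implication between U-positive definiteness and M-positive definiteness correct: since U-pd is the stronger notion (being the generalization that encompasses M-pd as a special case in the sense explained in \cref{app:1}), U-pd $\Rightarrow$ M-pd, and hence non-M-pd $\Rightarrow$ non-U-pd, which is exactly what is needed to combine with \cref{thm32}.
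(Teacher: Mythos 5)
Your proof is correct and follows exactly the route the paper intends: the paper proves the analogous reachability corollary with the one-line citation of \cref{pro:2.7} and \cref{thm31}, and your argument is precisely that one-liner transposed to \cref{thm32}, with the direction of the implication (U-positive definite $\Rightarrow$ M-positive definite, hence not M-pd $\Rightarrow$ not U-pd $\Rightarrow$ not observable) handled correctly. Nothing further is needed.
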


The observability Gramian assesses to what degree each state affects future outputs \cite{doi:10.1142/S0218127405012429}. The infinite horizon observability Gramian can be computed from the tensor Lyapunov equation defined by
\begin{equation}
\textsf{A}^\top*\textsf{W}_o*\textsf{A} - \textsf{W}_o = -\textsf{C}^\top*\textsf{C}. \label{eq:3.9}
\end{equation}
If the pair $(\textsf{A},\textsf{C})$ is observable and all the U-eigenvalues of $\textsf{A}$ have magnitude less than one, there exists a unique weakly symmetric U-positive definite solution $\textsf{W}_o$.

The following results can be proved similarly to those in \cref{sec:5.2}.

\begin{proposition}\label{pro:3.13}
The pair $(\textsf{A},\textsf{C})$ is observable if and only if the $I_1J_1\times J_1\times \dots \times I_NJ_N\times J_N$ even-order observability tensor
\begin{equation}\label{eq:3.10}
\mathscr{O}=
\begin{vmatrix}
\textsf{C} &
 \textsf{C}*\textsf{A} &
 \dots&
 \textsf{C}*\textsf{A}^{\Pi_{\mathcal{J}}-1}
\end{vmatrix}^\top
\end{equation}
spans $\mathbb{R}^{J_1\times J_2\times\dots \times J_N}$. In other words, $\text{rank}_U(\mathscr{O})=\Pi_{\mathcal{J}}$.
\end{proposition}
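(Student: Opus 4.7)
The plan is to parallel the proof of \cref{coro3.1} via the duality between observability and reachability. First I would apply \cref{thm32} to reduce observability of $(\textsf{A},\textsf{C})$ to the statement that no nonzero tensor $\textsf{X}_a \in \mathbb{R}^{J_1\times\dots\times J_N}$ satisfies $\textsf{C}*\textsf{A}^{k}*\textsf{X}_a = \textsf{O}$ for every $k \geq 0$. This follows by expanding the quadratic form $\textsf{X}_a^\top * \textsf{W}_o * \textsf{X}_a = \sum_{t=t_0}^{t_1-1} \|\textsf{C}*\textsf{A}^{t-t_0}*\textsf{X}_a\|^2$ and noting that U-positive definiteness of $\textsf{W}_o$ is equivalent to the nonexistence of such a nonzero annihilator.

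Second, I would invoke the generalized Cayley--Hamilton theorem for even-order square tensors (discussed right after \cref{def:2.10}): $\textsf{A}$ satisfies its own characteristic polynomial of degree $\Pi_{\mathcal{J}}$, so every power $\textsf{A}^{k}$ with $k\geq \Pi_{\mathcal{J}}$ is a linear combination of $\textsf{I},\textsf{A},\dots,\textsf{A}^{\Pi_{\mathcal{J}}-1}$. Hence the vanishing condition from the previous step need only be checked for $k=0,1,\dots,\Pi_{\mathcal{J}}-1$, and observability is equivalent to the absence of a nonzero $\textsf{X}_a$ with $\textsf{C}*\textsf{A}^{k}*\textsf{X}_a = \textsf{O}$ for each such $k$.

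Third, I would assemble those $\Pi_{\mathcal{J}}$ constraints into the single Einstein-product equation $\mathscr{O}*\textsf{X}_a = \textsf{O}$, using the mode column block tensor structure of $\mathscr{O}$ in (\ref{eq:3.10}) together with the fact that the Einstein product distributes over block tensors (as noted in \cref{sec:4.1}). Passing through the isomorphism $\varphi$, this becomes a standard linear system $\varphi(\mathscr{O})\,\varphi(\textsf{X}_a) = 0$, which admits only the trivial solution iff $\varphi(\mathscr{O})$ has $\Pi_{\mathcal{J}}$ linearly independent columns, i.e. $\text{rank}_U(\mathscr{O}) = \Pi_{\mathcal{J}}$.

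The main obstacle is the bookkeeping in the third step: verifying that the mode column block tensor $\mathscr{O}$, with its particular U-transposed pairwise ordering $I_1J_1\times J_1\times\dots\times I_NJ_N\times J_N$, unfolds under $\varphi$ to precisely the vertical concatenation of the matrices $\varphi(\textsf{C}*\textsf{A}^{k})$ with no stray permutations, so that full column rank of $\varphi(\mathscr{O})$ indeed encodes simultaneous injectivity of the maps $\textsf{X}_a \mapsto \textsf{C}*\textsf{A}^{k}*\textsf{X}_a$. Once this identification is checked via the block-tensor unfolding property from \cref{sec:4.1} (the dual of the row-block case used in \cref{coro3.1}), the remainder is routine and gives the desired equivalence.
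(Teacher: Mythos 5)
Your proposal is correct and follows essentially the same route as the paper, which gives no separate argument for this proposition beyond noting that it is proved "similarly to" the reachability case, whose proof is exactly the two ingredients you use: the block-tensor Einstein-product identities (\cref{pro:2.9}) together with the generalized Cayley--Hamilton theorem, with \cref{thm32} supplying the annihilator characterization of observability. The "main obstacle" you flag in your third step is already dispatched by \cref{pro6} and the remark in \cref{sec:4.1}: the unfolding of a block tensor equals the concatenation of the blocks' unfoldings up to a permutation matrix, which changes neither the rank nor the kernel, so no further bookkeeping is required.
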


\begin{corollary}\label{coro:3.16}
Given the observability tensor $\mathscr{O}$ in (\ref{eq:3.10}), if $\text{rank}_{2n}(\mathscr{O})\neq J_n$ for some $n$, the pair $(\textsf{A},\textsf{C})$ is not observable.
\end{corollary}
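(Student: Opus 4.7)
The plan is to mirror the proof of \cref{coro:3.10} (its reachability analog), swapping the roles of the odd and even modes to accommodate the fact that the observability tensor $\mathscr{O}$ has the state dimensions $J_n$ sitting in its \emph{even}-indexed modes (i.e.\ the ``$I$-slots'' in the generic even-order paired tensor notation $\textsf{A}\in\mathbb{R}^{J_1\times I_1\times \dots \times J_N\times I_N}$), whereas for $\mathscr{R}$ they sat in the odd-indexed modes.

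First I would invoke \cref{pro:3.13}, the Kalman-type rank condition for observability: the pair $(\textsf{A},\textsf{C})$ is observable if and only if $\text{rank}_U(\mathscr{O})=\Pi_{\mathcal{J}}$. The proof then reduces to showing the contrapositive of the stated implication in terms of the unfolding rank, namely that $\text{rank}_U(\mathscr{O})=\Pi_{\mathcal{J}}$ forces $\text{rank}_{2n}(\mathscr{O})=J_n$ for every $n$.

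Second, I would apply \cref{pro:2.95} to $\mathscr{O}$, reading $\mathscr{O}$ as an even-order paired tensor of shape $I_1J_1\times J_1\times \dots \times I_NJ_N\times J_N$; here the role of the ``$I_n$'' in \cref{pro:2.95} is played by $J_n$ (these are precisely the even-indexed dimensions), and $\Pi_{\mathcal{I}}$ of that proposition is $\Pi_{\mathcal{J}}$. The relevant half of \cref{pro:2.95} states that if $\text{rank}_U(\mathscr{O}) = \Pi_{\mathcal{J}}$, then $\text{rank}_{2n}(\mathscr{O}) = J_n$ for all $n=1,\dots,N$. Taking the contrapositive and combining with \cref{pro:3.13} yields exactly the claim: if $\text{rank}_{2n}(\mathscr{O})\neq J_n$ for some $n$, then $\text{rank}_U(\mathscr{O})\neq \Pi_{\mathcal{J}}$, and hence $(\textsf{A},\textsf{C})$ is not observable.

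There is no real obstacle here — the result is an immediate two-line corollary of \cref{pro:2.95,pro:3.13}, parallel to \cref{coro:3.10}. The only point worth verifying carefully is the index bookkeeping: matching the even modes of $\mathscr{O}$ to the ``$I$-slots'' of \cref{pro:2.95} (rather than the ``$J$-slots'' used in the reachability argument), which is why the index $2n$ rather than $2n-1$ appears in the hypothesis.
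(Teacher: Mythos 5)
Your proof is correct and matches the paper's intended argument: the paper explicitly states that the observability corollaries "can be proved similarly to those in \cref{sec:5.2}," and your combination of \cref{pro:2.95} (applied with the state dimensions $J_n$ in the even-indexed slots) with the Kalman rank condition \cref{pro:3.13} is precisely the observability analog of the proof of \cref{coro:3.10}. The index bookkeeping you flag — that $2n$ rather than $2n-1$ appears because $\mathscr{O}\in\mathbb{R}^{I_1J_1\times J_1\times\dots\times I_NJ_N\times J_N}$ carries the state dimensions in its even modes — is handled correctly.
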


\begin{corollary}
Given the observability tensor $\mathscr{O}$ in (\ref{eq:3.10}), if the set of $n$-mode singular values of $\mathscr{O}$ obtained from the HOSVD contains zero for even $n$, the pair $(\textsf{A},\textsf{C})$ is not observable.
\end{corollary}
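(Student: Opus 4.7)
The plan is to mirror the argument used in Corollary 5.9 for reachability, with the role of odd modes replaced by even modes to reflect the fact that the observability tensor $\mathscr{O}\in\mathbb{R}^{I_1J_1\times J_1\times \dots \times I_NJ_N\times J_N}$ carries the $J_n$ factors in its even-indexed modes (whereas for $\mathscr{R}$ they appeared in the odd-indexed modes). In particular, the proof will not require any new machinery beyond what has already been developed.

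First I would invoke the HOSVD characterization from De Lathauwer et al.\ recalled right after Theorem 2.1: the number of nonvanishing $n$-mode singular values of a tensor, as returned by the HOSVD, coincides with its $n$-mode multilinear rank. Applied to $\mathscr{O}$, the hypothesis that some even-mode singular value vanishes therefore yields $\text{rank}_{2n}(\mathscr{O}) < J_n$ for that even $n$.

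Next, I would appeal to Corollary 3.16, which states that whenever $\text{rank}_{2n}(\mathscr{O})\neq J_n$ for some $n$, the pair $(\textsf{A},\textsf{C})$ fails to be observable. Combining these two steps delivers the desired conclusion. Alternatively, one can shortcut through the contrapositive of Proposition 2.95: if $\text{rank}_U(\mathscr{O})=\Pi_{\mathcal{J}}$, then $\text{rank}_{2n}(\mathscr{O})=J_n$ for every $n$, so a vanishing even-mode singular value forces $\text{rank}_U(\mathscr{O})<\Pi_{\mathcal{J}}$, and Proposition 3.13 then denies observability.

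I do not foresee a genuine obstacle here; the statement is a direct dualization of Corollary 5.9 and the main substantive facts (the HOSVD/multilinear-rank correspondence and the unfolding-rank criterion for observability) have already been established. The only minor care point is bookkeeping: the observability tensor is transposed in its defining block expression (\ref{eq:3.10}), so one must be certain that the $J_n$ dimensions land in the even modes (indices $2,4,\dots,2N$) rather than the odd ones, which is precisely why the hypothesis is phrased for even $n$ rather than odd.
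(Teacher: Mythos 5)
Your proposal is correct and follows essentially the same route as the paper, which proves the reachability analogue (Corollary 5.9) by combining the HOSVD fact that the number of nonvanishing $n$-mode singular values equals the $n$-mode multilinear rank with the multilinear-rank criterion, and then obtains the observability version by the stated duality. Your bookkeeping remark about the $J_n$ dimensions sitting in the even modes of $\mathscr{O}$ is exactly the point that makes the dualization go through.
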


\begin{corollary}
Given the observability tensor $\mathscr{O}$ in (\ref{eq:3.10}), if the CPD of $\mathscr{O}$ satisfies (\ref{eq:2.17}) with CP rank equal to $\Pi_{\mathcal{J}}$, the pair $(\textsf{A},\textsf{C})$ is observable. Conversely, if the pair $(\textsf{A},\textsf{C})$ is observable, then the CP rank of $\mathscr{O}$ is greater than or equal to $\Pi_{\mathcal{J}}$.
\end{corollary}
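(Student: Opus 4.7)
The plan is to follow exactly the dual strategy used for the reachability corollary (Corollary 3.12), since this statement is the observability analogue and only requires replacing $\mathscr{R}$ with $\mathscr{O}$ in the argument. I would first invoke \cref{pro:3.13}, which tells us that $(\textsf{A},\textsf{C})$ is observable if and only if $\text{rank}_U(\mathscr{O}) = \Pi_{\mathcal{J}}$. Thus the problem reduces entirely to relating the CP rank of $\mathscr{O}$ to its unfolding rank, and the two directions of the statement correspond to the two directions of that relationship.

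For the forward direction, I would apply \cref{pro:2.11} directly to $\mathscr{O}$: the hypothesis says that the CPD of $\mathscr{O}$ has CP rank equal to $\Pi_{\mathcal{J}}$ and satisfies the $k$-rank conditions in (\ref{eq:2.17}). Under these hypotheses, \cref{pro:2.11} gives $\text{rank}_U(\mathscr{O}) = \Pi_{\mathcal{J}}$, and then \cref{pro:3.13} closes the argument. No additional computation is needed because \cref{pro:2.11} is stated for general even-order paired tensors and $\mathscr{O}$ is such a tensor by construction (of shape $I_1J_1\times J_1\times \dots \times I_NJ_N\times J_N$).

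For the converse, I would rely on the rank inequality noted in \cref{sec:4.2}, namely that the CP rank of a tensor is always greater than or equal to the rank of any of its unfoldings, and in particular $\text{rank}_{\text{CP}}(\mathscr{O}) \geq \text{rank}_U(\mathscr{O})$. If $(\textsf{A},\textsf{C})$ is observable, \cref{pro:3.13} forces $\text{rank}_U(\mathscr{O}) = \Pi_{\mathcal{J}}$, and so $\text{rank}_{\text{CP}}(\mathscr{O}) \geq \Pi_{\mathcal{J}}$.

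I do not anticipate any real obstacle. The only subtlety worth flagging is that \cref{pro:2.11} only gives a sufficient condition for the equality $\text{rank}_U = R$ (the Kruskal-type $k$-rank hypothesis is needed), so the forward direction genuinely requires (\ref{eq:2.17}) as a hypothesis, whereas the converse uses only the unconditional inequality between CP rank and any unfolding rank. This asymmetry explains why the statement is phrased as "CP rank equal to $\Pi_{\mathcal{J}}$" in one direction and "CP rank $\geq \Pi_{\mathcal{J}}$" in the other, mirroring \cref{coro:3.12} exactly.
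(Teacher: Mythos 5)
Your proposal is correct and follows exactly the route the paper intends: the paper omits the proof, stating only that the observability results "can be proved similarly to those in \cref{sec:5.2}," and the intended argument is precisely the dual of the proof of \cref{coro:3.12} — combine \cref{pro:2.11} with \cref{pro:3.13} for the forward direction, and use the inequality $\text{rank}_{\text{CP}} \geq \text{rank}_U$ for the converse. Your remark on the asymmetry between the two directions is accurate and consistent with the paper's phrasing.
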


\begin{corollary}\label{cor:5.20}
Given the observability tensor $\mathscr{O}$ in (\ref{eq:3.10}), the pair $(\textsf{A},\textsf{C})$ is observable if and only if the $N$-th optimal TT-rank of $\tilde{\mathscr{O}}\in\mathbb{R}^{I_1J_1\times \dots \times I_NJ_N\times J_1\times \dots \times J_N}$, defined in \cref{pro:2.96}, is equal to $\Pi_{\mathcal{J}}$.
\end{corollary}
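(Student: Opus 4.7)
The plan is to obtain this corollary by directly chaining two results already established in the paper, with no new computation required. By \cref{pro:3.13}, observability of the pair $(\textsf{A},\textsf{C})$ is equivalent to the unfolding rank condition $\text{rank}_U(\mathscr{O}) = \Pi_{\mathcal{J}}$, where $\mathscr{O}$ is the even-order observability tensor of size $I_1J_1\times J_1 \times \dots \times I_NJ_N\times J_N$ and $\Pi_{\mathcal{J}} = \prod_{n=1}^N J_n$ is the total state space dimension. Separately, \cref{pro:2.96} identifies the unfolding rank of any even-order paired tensor with a specific TT-rank of its $\mathbb{S}$-transpose; applying it to $\mathscr{O}$ (playing the role of $\textsf{A}$ in the statement of \cref{pro:2.96}) gives $\text{rank}_U(\mathscr{O}) = \tilde{R}_N$, where $\tilde{R}_N$ is the $N$-th optimal TT-rank of the permuted tensor $\tilde{\mathscr{O}}$.

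The only bookkeeping step is to verify that the permuted shape coincides with the one stated in the corollary. Under the permutation $\mathbb{S}$ of \cref{pro:2.96}, the odd-indexed modes of $\mathscr{O}$, namely $(I_1J_1, I_2J_2, \dots, I_NJ_N)$, are moved to the first $N$ positions, and the even-indexed modes $(J_1, J_2, \dots, J_N)$ are moved to the last $N$ positions, yielding precisely $\tilde{\mathscr{O}} \in \mathbb{R}^{I_1J_1\times \dots \times I_NJ_N \times J_1\times \dots \times J_N}$. Composing the two equivalences then gives: $(\textsf{A},\textsf{C})$ is observable $\iff$ $\text{rank}_U(\mathscr{O})=\Pi_{\mathcal{J}}$ $\iff$ $\tilde{R}_N = \Pi_{\mathcal{J}}$, which is exactly the claim.

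There is no real obstacle. The proof mirrors the strategy used for the reachability analogue \cref{coro:3.13}, and amounts to a two-step citation of \cref{pro:3.13} and \cref{pro:2.96}, together with a trivial check that the permuted dimensions agree with those stated.
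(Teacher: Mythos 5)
Your proof is correct and is essentially the paper's own argument: the paper proves the reachability analogue (\cref{coro:3.13}) by citing \cref{pro:2.96} together with the Kalman-type rank condition, and states that the observability results follow in the same way, which is exactly your two-step chain of \cref{pro:3.13} and \cref{pro:2.96}. The dimension bookkeeping you include is also consistent with the statement.
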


\section{Model reduction for MLTI systems}\label{sec:6}
Based on the observations in \cref{sec:5}, it is more natural to manipulate MLTI systems in the tensor decomposition format so that all the computational advantages can be realized. This may also result in a more compressed representation.

\subsection{Generalized CPD/TTD}\label{sec:6.1}
We first introduce the notion of generalized CPD/TTD for even-order paired tensors described in \cite{gel2017tensor}, in which the generalized CPD can also be viewed as the extension of the Kronecker rank approximation proposed by Van Loan \cite{loan_2016}. Generalized CPD and TTD share a similar format and possess many analogous properties. 

\begin{definition}\label{def:4.2}
Given an even-order paired tensor $\textsf{A}\in\mathbb{R}^{J_1\times I_1\times\dots \times J_N\times I_N}$, the generalized CPD of  $\textsf{A}$  is given by
\begin{equation}\label{eq:4.15}
\textsf{A}=\sum_{r=1}^R\textsf{A}_{r::}^{(1)}\circ \textsf{A}_{r::}^{(2)}\circ \dots \circ \textsf{A}_{r::}^{(N)},
\end{equation}
where, $\textsf{A}^{(n)}\in\mathbb{R}^{R\times J_n\times I_n}$. Extending Van Loan's definition \cite{loan_2016}, we call the smallest $R$ that achieves (\ref{eq:4.15}) the Kronecker rank of $\textsf{A}$.
\end{definition}

\begin{definition}
Given an even-order paired tensor $\textsf{A}\in\mathbb{R}^{J_1\times I_1\times\dots \times J_N\times I_N}$, the generalized TTD of  $\textsf{A}$  is given by
\begin{equation}\label{eq:gttd}
\textsf{A}=\sum_{r_0=1}^{R_0}\dots \sum_{r_N=1}^{R_N}\textsf{A}^{(1)}_{r_0::r_1}\circ \textsf{A}^{(2)}_{r_1::r_2}\circ \dots \circ \textsf{A}^{(N)}_{r_{N-1}::r_N},
\end{equation}
where, $\textsf{A}^{(n)}\in\mathbb{R}^{R_{n-1}\times J_n\times I_n\times R_n}$, and $\{R_0,R_1,\dots,R_N\}$ is the set of TT-ranks with $R_0=R_N=1$.
\end{definition}

Please refer to \cref{app:5.1} for the use of : notation. Given two even-order paired tensors in the generalized CPD/TTD format, the Einstein product (\ref{eq10}) between the two can be computed without having to reconstruct the full tensors, i.e. keeping the original format \cite{gel2017tensor}. The following proposition states the case for generalized CPD, which also applies to generalized TTD.

\begin{proposition}\label{pro:2.13}
Given two even-order paired tensors $\textsf{A}\in\mathbb{R}^{J_1\times K_1\times \dots J_N\times K_N}$ and $\textsf{B}\in\mathbb{R}^{K_1\times I_1 \times \dots \times K_N\times I_N}$ in the format of (\ref{eq:4.15}) with  Kronekcer ranks $R$ and $S$, respectively, the Einstein product $\textsf{A}*\textsf{B}$ is given by 
\begin{equation}\label{eq:6.3}
\textsf{A}*\textsf{B} = \sum_{t=1}^{T}\textsf{E}^{(1)}_{t::}\circ \textsf{E}^{(2)}_{t::}\circ \dots\circ\textsf{E}^{(N)}_{t::},
\end{equation}
where, $\textsf{E}^{(n)}_{t::} = \textsf{A}^{(n)}_{r::}\textsf{B}^{(n)}_{s::}\in\mathbb{R}^{J_n\times I_n}$, and $t = ivec(\{r,s\},\{R,S\})$ with $T=RS$.
\end{proposition}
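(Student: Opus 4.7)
The plan is to prove this by direct computation: expand both sides using the component-wise definitions of the generalized CPD (\ref{eq:4.15}) and the Einstein product (\ref{eq10}), then interchange the order of summations to recognize the resulting expression as the generalized CPD claimed on the right-hand side.

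First, I would write out the component form of the Einstein product from equation (\ref{eq10}):
\begin{equation*}
(\textsf{A}*\textsf{B})_{j_1 i_1 \dots j_N i_N} = \sum_{k_1=1}^{K_1}\dots\sum_{k_N=1}^{K_N} \textsf{A}_{j_1 k_1 \dots j_N k_N}\,\textsf{B}_{k_1 i_1 \dots k_N i_N}.
\end{equation*}
Then I would substitute in the generalized CPD representations
\begin{equation*}
\textsf{A}_{j_1 k_1 \dots j_N k_N} = \sum_{r=1}^{R}\prod_{n=1}^{N}\textsf{A}^{(n)}_{r j_n k_n}, \qquad \textsf{B}_{k_1 i_1 \dots k_N i_N} = \sum_{s=1}^{S}\prod_{n=1}^{N}\textsf{B}^{(n)}_{s k_n i_n},
\end{equation*}
which follow from the definition of the outer product in (\ref{eq:4.15}).

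Next, the key algebraic step is to pull the sums over $r$ and $s$ outside the sums over $k_1,\dots,k_N$ (all summations are finite, so interchange is unrestricted), and then to observe that the integrand factors across the mode index $n$. Because the $k_n$-summation only couples factor $\textsf{A}^{(n)}$ with factor $\textsf{B}^{(n)}$, the multi-sum over $(k_1,\dots,k_N)$ distributes into a product of single sums:
\begin{equation*}
(\textsf{A}*\textsf{B})_{j_1 i_1 \dots j_N i_N} = \sum_{r=1}^{R}\sum_{s=1}^{S}\prod_{n=1}^{N}\left(\sum_{k_n=1}^{K_n}\textsf{A}^{(n)}_{r j_n k_n}\textsf{B}^{(n)}_{s k_n i_n}\right).
\end{equation*}
The inner sum is precisely the $(j_n,i_n)$-entry of the ordinary matrix product $\textsf{A}^{(n)}_{r::}\,\textsf{B}^{(n)}_{s::}$, so it equals $\textsf{E}^{(n)}_{t j_n i_n}$ with $t=ivec(\{r,s\},\{R,S\})$ as defined in the statement.

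Finally, I would collapse the double summation over $(r,s)$ into a single summation over $t=1,\dots,T$ with $T=RS$, using the bijection provided by $ivec$, and recognize the result as the component form of the outer product expansion $\sum_{t=1}^{T}\textsf{E}^{(1)}_{t::}\circ\dots\circ\textsf{E}^{(N)}_{t::}$, which completes the proof. The only conceptual hurdle is verifying the factorization across modes in the interchange step; beyond that, the argument is purely bookkeeping of indices and relies on no deep structural results.
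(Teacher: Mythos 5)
Your proof is correct: the paper itself does not spell out a proof of this proposition (it defers to the cited reference on tensor-based dynamical systems), and your direct index computation --- substituting the component form of the generalized CPD into the Einstein product, interchanging the finite sums, and factoring the multi-sum over $(k_1,\dots,k_N)$ into a product of single $k_n$-sums that each yield the matrix product $\textsf{A}^{(n)}_{r::}\textsf{B}^{(n)}_{s::}$ --- is exactly the standard argument one would give. The only caveat worth adding is that this establishes a representation of $\textsf{A}*\textsf{B}$ with $T=RS$ terms, so $RS$ is an upper bound on its Kronecker rank rather than its exact value.
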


\textit{Remark.}
The computational complexity of the Einstein product (\ref{eq:6.3}) is about $\mathcal{O}(NJ^3R^2)$ assuming that $J_n=I_n=K_n=J$ and $R=S$, which is much lower than $\mathcal{O}(J^{3N})$ from the Einstein product (\ref{eq10}) if $R$ is small. 

The generalized CPD can be recovered from the standard CPD, and similarly for generalized TTD (see \cref{alg:5.4}). The algorithm below is extended from the results by Van Loan \cite{loan_2016} about the Kronecker rank approximation. Thus, one can easily obtain generalized CPD by using any technique for computing the standard CPD including alternating least square (ALS) and modified ALS methods \cite{doi:10.1137/07070111X,Kolda06multilinearoperators}.   
 
\begin{algorithm}[h]
\caption{Generalized CPD}
\label{alg:5.3}
\begin{algorithmic}[1]
\STATE{Given an even-order paired tensors $\textsf{A}\in\mathbb{R}^{J_1\times I_1\times \dots \times J_N\times I_N}$}
\STATE{Set $\check{\textsf{A}} = \texttt{reshape}(\textsf{A},J_1I_1,J_2I_2,\dots,J_NI_N)$}
\STATE{Apply CPD algorithms on $\check{\textsf{A}}$} such that $\check{\textsf{A}} = \sum_{r=1}^R \lambda_r \textbf{a}_1^{(r)}\circ \textbf{a}_2^{(r)}\circ\dots \circ\textbf{a}_N^{(r)}$
\STATE{Set $\textsf{A}_{r::}^{(n)} = \lambda_r^{\frac{1}{N}}\texttt{reshape}(\textbf{a}_n^{(r)},J_n,I_n)$} for $n=1,2,\dots,N$
\RETURN Component tensors $\textsf{A}^{(n)}$ for $n=1,2,\dots,N$.
\end{algorithmic}
\end{algorithm}

\subsection{MLTI model reduction}\label{sec:6.2}
The problem of model reduction has been studied heavily in the framework of classical control \cite{DOLGIN2005771, fortuna2012model, obinata2012model}. Methods including proper orthogonal decomposition (POD), scale-separation and averaging, balanced truncation are applied in many engineering applications when dealing with high-dimensional linear/nonlinear systems \cite{Igor}. As mentioned in \cref{sec:3}, using generalized CPD/TTD, we propose a new MLTI representation with fewer parameters. Note that we omit colons in each component tensor in this and the following subsections for simplicity (e.g. $\textsf{A}_{r}^{(n)} = \textsf{A}_{r::}^{(n)}$).

\begin{proposition}\label{pro:4.1}
The MLTI system (\ref{eq11}) is equivalent to 
\begin{equation}\label{eq45}
\begin{cases}
\textsf{X}_{t+1} = \displaystyle\sum_{r=1}^{R_1} \textsf{X}_t\times \{\textsf{A}_{r}^{(1)},\dots,\textsf{A}_{r}^{(N)}\}+\displaystyle\sum_{r=1}^{R_2} \textsf{U}_t\times \{\textsf{B}_{r}^{(1)},\dots,\textsf{B}_{r}^{(N)}\}\\
\textsf{Y}_t  = \displaystyle\sum_{r=1}^{R_3} \textsf{X}_t\times \{\textsf{C}_{r}^{(1)},\dots,\textsf{C}_{r}^{(N)}\}
\end{cases},
\end{equation}
where, $R_1,R_2,R_3$ are the Kronecker ranks of the system, and $\textsf{A}^{(n)}\in\mathbb{R}^{R_1\times J_n\times J_n}$, $\textsf{B}^{(n)}\in\mathbb{R}^{R_2\times J_n\times K_n}$ and $\textsf{C}^{(n)}\in\mathbb{R}^{R_3\times I_n\times J_n}$. 
\end{proposition}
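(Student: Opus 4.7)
The plan is to combine the generalized CPD of Definition 4.2 with the Einstein-to-Tucker identification supplied by Lemma 2.9 and Proposition 3.2, applied termwise. First I would introduce the generalized CPDs of the three system tensors:
\[
\textsf{A} = \sum_{r=1}^{R_1} \textsf{A}_{r}^{(1)} \circ \cdots \circ \textsf{A}_{r}^{(N)}, \quad \textsf{B} = \sum_{r=1}^{R_2} \textsf{B}_{r}^{(1)} \circ \cdots \circ \textsf{B}_{r}^{(N)}, \quad \textsf{C} = \sum_{r=1}^{R_3} \textsf{C}_{r}^{(1)} \circ \cdots \circ \textsf{C}_{r}^{(N)},
\]
where each slice $\textsf{A}_{r}^{(n)} \in \mathbb{R}^{J_n\times J_n}$, $\textsf{B}_{r}^{(n)} \in \mathbb{R}^{J_n\times K_n}$, and $\textsf{C}_{r}^{(n)} \in \mathbb{R}^{I_n\times J_n}$ is a matrix of exactly the shape required to serve as a Tucker factor along mode $n$ of the appropriate state, input, or output tensor. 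The values $R_1, R_2, R_3$ are (at least) the Kronecker ranks, so these are exact decompositions, not approximations.

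Second, I would use bilinearity of the Einstein product, which is immediate from the sum-of-products formula (\ref{eq:0}), to distribute the sums across the products appearing in (\ref{eq11}):
\[
\textsf{A}*\textsf{X}_t = \sum_{r=1}^{R_1} \bigl(\textsf{A}_{r}^{(1)} \circ \cdots \circ \textsf{A}_{r}^{(N)}\bigr) * \textsf{X}_t,
\]
and likewise for $\textsf{B}*\textsf{U}_t$ and $\textsf{C}*\textsf{X}_t$. Now Proposition 3.2 (which itself follows from Lemma 2.9 with $I_n=1$ and $\textbf{V}_n=1$) identifies the Einstein product of a rank-one outer product of matrices with the Tucker product: for each $r$,
\[
\bigl(\textsf{A}_{r}^{(1)} \circ \cdots \circ \textsf{A}_{r}^{(N)}\bigr) * \textsf{X}_t = \textsf{X}_t \times \{\textsf{A}_{r}^{(1)}, \dots, \textsf{A}_{r}^{(N)}\},
\]
and similarly for the $\textsf{B}$ and $\textsf{C}$ summands. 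Substituting these termwise identities into (\ref{eq11}) delivers (\ref{eq45}); running the argument backward — repackaging each outer sum into a single generalized CPD — recovers (\ref{eq11}) from (\ref{eq45}), giving equivalence.

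I do not anticipate a genuine obstacle, since the proof is essentially a rewriting powered by two ingredients already in hand: bilinearity of $*$ and the rank-one case of Lemma 2.9/Proposition 3.2. The only thing to be careful about is pairing the index shapes correctly so that each $\textsf{A}_{r}^{(n)}$, $\textsf{B}_{r}^{(n)}$, $\textsf{C}_{r}^{(n)}$ is recognized as the correct matrix factor in the corresponding mode — a bookkeeping check rather than a substantive argument.
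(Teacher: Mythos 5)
Your proposal is correct and follows essentially the same route as the paper, whose proof is simply the citation of Definition 6.1 (generalized CPD) and Proposition 3.2; you have merely written out the implicit steps (exactness of the decomposition at the Kronecker rank, bilinearity of the Einstein product, and termwise application of the rank-one identity) that the paper leaves to the reader.
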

\begin{proof}
The proof follows from \cref{def:4.2,pro:3.2}.
\end{proof}

\textit{Remark.}  The number of parameters of the MLTI system representation (\ref{eq45})  is $R_1\sum_{n=1}^N J_n^2+R_2\sum_{n=1}^N J_nK_n + R_3\sum_{n=1}^N I_nJ_n$. If the Kronecker ranks $R_1,R_2,R_3$ are relatively small, the total number of parameters is much less than that of the MLTI system model (\ref{eq11}) which is given by $\prod_{n=1}^NJ_n^2+\prod_{n=1}^N J_nK_n+\prod_{n=1}^N I_nJ_n$.

The MLTI system representation (\ref{eq45}) is attractive for systems captured by sparse tensors or tensors with low Kronecker ranks where the two advantages, model reduction and computational efficiency, can be exploited. In particular, if \textsf{A}, \textsf{B} and \textsf{C} are fourth-order paired tensors, the generalized CPDs are reduced to matrix SVD problems, see section 9.2 in \cite{loan_2016}. However, there are two major drawbacks. First, for $N>2$, there is no exact method to compute the Kronecker rank of a tensor \cite{doi:10.1137/07070111X}, and truncating the rank does not ensure a good estimate. Second, current CPD algorithms are not numerically stable, which could result in ill-conditioning during the tensor decomposition and low rank approximation. One way to fix these issues is to replace generalized CPD by generalized TTD in (\ref{eq45}), which takes a similar form. The algorithms for computing generalized TTD are numerically stable with unique optimal TT-ranks \cite{doi:10.1137/090752286}. Most importantly, the TTD based results obtained in \cref{sec:5} can be realized in the form of (\ref{eq45}). For example, we can determine the stability of MLTI systems from the TTD of $\tilde{\textsf{A}}$ defined in \cref{pro:2.96}, which can be obtained from the generalized TTD of \textsf{A} efficiently (similar to the remark in \cref{sec:4.3}).

Recall from \cref{sec:3} that one can always convert the MLTI system (\ref{eq11}) to an equivalent LTI form and then apply traditional model reduction approaches, e.g. balanced truncation. However, after converting to a matrix form, the low tensor rank structure exploited in the form of (\ref{eq45})  may not be preserved, and thus low memory requirements cannot be achieved, see \cref{sec:7.4}. Furthermore, as shown in \cite{chen_2019}, the MLTI system (\ref{eq45}) can be used to further develop a higher-order balanced truncation framework directly in the TTD format, which can provide additional computation and memory benefits over unfolding based model reduction methods.

\subsection{Explicit solution and stability}
In addition to using tensor decompositions, we can exploit matrix calculations of the factor matrices $\textsf{A}^{(n)}_r$ to develop notions including explicit solution and stability for the MLTI system (\ref{eq45}) which also have lower computational costs compared to unfolding based methods.

\begin{proposition}[Solution]\label{pro:4.4}
For an unforced MLTI system
$
\textsf{X}_{t+1} = \sum_{r=1}^{R_1} \textsf{X}_t\times\{\textsf{A}_{r}^{(1)},\textsf{A}_{r}^{(2)},\dots,\textsf{A}_{r}^{(N)}\},
$
the solution for $\textsf{X}$ at time $k$, given initial condition $\textsf{X}_0$, is
\begin{equation}\label{eq:4.18}
\textsf{X}_k = \sum_{r=1}^{R_1^k} \textsf{X}_0\times\{\bar{\textsf{A}}_{r}^{(1)},\bar{\textsf{A}}_{r}^{(2)},\dots,\bar{\textsf{A}}_{r}^{(N)}\},
\end{equation}
where, $\bar{\textsf{A}}_{r}^{(n)} = \textsf{A}_{r_1}^{(n)}\textsf{A}_{r_2}^{(n)}\dots\textsf{A}_{r_k}^{(n)}$ for $r=ivec(\{r_1,r_2,\dots,r_k\},\{R_1,R_1,\stackrel{ k}{\cdots},R_1\})$.
\end{proposition}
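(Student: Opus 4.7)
The plan is to proceed by induction on $k$, with the main ingredients being the composition rule for Tucker products, distributivity of the Tucker product over finite sums, and a relabelling via the $ivec$ bijection.

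For the base case, take $k=1$: the formula collapses to the defining recursion $\textsf{X}_1=\sum_{r=1}^{R_1}\textsf{X}_0\times\{\textsf{A}^{(1)}_r,\dots,\textsf{A}^{(N)}_r\}$, which matches with $\bar{\textsf{A}}^{(n)}_r=\textsf{A}^{(n)}_r$. (One could also start at $k=0$ with an empty product interpreted as the identity matrix on each mode.)

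For the inductive step, assume \cref{eq:4.18} holds at time $k$. Apply one more step of the recursion to obtain
\begin{equation*}
\textsf{X}_{k+1}=\sum_{s=1}^{R_1}\left(\sum_{r=1}^{R_1^k}\textsf{X}_0\times\{\bar{\textsf{A}}^{(1)}_r,\dots,\bar{\textsf{A}}^{(N)}_r\}\right)\times\{\textsf{A}^{(1)}_s,\dots,\textsf{A}^{(N)}_s\}.
\end{equation*}
Using distributivity of the Tucker product over the inner sum, followed by the composition identity $(\textsf{Y}\times_n\textbf{M})\times_n\textbf{P}=\textsf{Y}\times_n(\textbf{P}\textbf{M})$ applied $N$ times (which is a direct consequence of the definition of the $n$-mode product and can be stated compactly as $(\textsf{Y}\times\{\textbf{M}_n\})\times\{\textbf{P}_n\}=\textsf{Y}\times\{\textbf{P}_n\textbf{M}_n\}$), each inner term becomes $\textsf{X}_0\times\{\textsf{A}^{(n)}_s\bar{\textsf{A}}^{(n)}_r\}_{n=1}^N$. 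The two sums can then be merged into a single sum over the index set $\{1,\dots,R_1\}\times\{1,\dots,R_1^k\}$, which is in bijection with $\{1,\dots,R_1^{k+1}\}$ via $ivec$.

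The main obstacle, and really the only subtlety, is the index bookkeeping: the composition rule stacks the newly applied factor $\textsf{A}^{(n)}_s$ on the \emph{left} of $\bar{\textsf{A}}^{(n)}_r$, whereas the statement orders the product left-to-right as $\textsf{A}^{(n)}_{r_1}\textsf{A}^{(n)}_{r_2}\cdots\textsf{A}^{(n)}_{r_{k+1}}$. Since the summation ranges over all tuples $(r_1,\dots,r_{k+1})\in\{1,\dots,R_1\}^{k+1}$ independently and the same tuple is shared across all $N$ modes, a simple relabelling (treating the new index $s$ as $r_1$ and shifting $r_i\mapsto r_{i+1}$ in the inductive product, or equivalently reindexing the $ivec$ bijection) recovers exactly the product ordering in \cref{eq:4.18}. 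The inductive step then closes and the proposition follows.
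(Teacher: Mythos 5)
Your proof is correct and follows essentially the same route as the paper: the paper's own proof simply cites its Einstein-product formula for generalized-CPD tensors (\cref{pro:2.13}) together with the Tucker/Einstein correspondence (\cref{pro:3.2}), and your induction on $k$ is a self-contained unpacking of exactly that iterated product computation. Your closing remark---that the left-stacking of the new factor is harmless because the sum ranges over all tuples $(r_1,\dots,r_{k+1})$ and the same tuple is shared across all $N$ modes---is a correct clarification of a point the paper leaves implicit.
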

\begin{proof}
The result follows immediately from \cref{pro:2.13,pro:3.2}.
\end{proof}

If the Kronecker rank $R_1$ is small, computing the explicit solution using (\ref{eq:4.18}) can be faster than using the Einstein product (\ref{eq10}). Additionally, we can assess the stability of the unforced MLTI system of (\ref{eq45}) based upon the Lyapunov approach.

\begin{proposition}[Stability]\label{pro:4.5}
For the unforced MLTI system of (\ref{eq45}), the equilibrium point $\textsf{X}=\textsf{O}$ is 
\begin{enumerate}
\item stable (i.s.L) if $\sum_{r=1}^{R_1}\prod_{n=1}^N\alpha_{r}^{(n)}=1$; \item asymptotically stable (i.s.L) if $\sum_{r=1}^{R_1}\prod_{n=1}^N\alpha_{r}^{(n)}<1$, 
\end{enumerate}
where, $\alpha_{r}^{(n)}$ denote the largest singular values of $\textsf{A}_{r}^{(n)}$.
\end{proposition}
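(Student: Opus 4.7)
The plan is to bound $\|\textsf{X}_k\|$ directly from the explicit solution in \cref{pro:4.4} by the triangle inequality and a submultiplicative inequality for the mode products.

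First I would invoke \cref{pro:4.4} to write
\begin{equation*}
\textsf{X}_k = \sum_{r=1}^{R_1^k} \textsf{X}_0\times\{\bar{\textsf{A}}_{r}^{(1)},\bar{\textsf{A}}_{r}^{(2)},\dots,\bar{\textsf{A}}_{r}^{(N)}\},
\end{equation*}
where $\bar{\textsf{A}}_{r}^{(n)} = \textsf{A}_{r_1}^{(n)}\textsf{A}_{r_2}^{(n)}\cdots\textsf{A}_{r_k}^{(n)}$ with the multi-index $(r_1,\dots,r_k)$ obtained from $r=ivec(\{r_1,\dots,r_k\},\{R_1,\stackrel{k}{\cdots},R_1\})$. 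Applying the triangle inequality for the Frobenius norm,
\begin{equation*}
\|\textsf{X}_k\| \;\leq\; \sum_{r=1}^{R_1^k} \bigl\|\textsf{X}_0\times\{\bar{\textsf{A}}_{r}^{(1)},\dots,\bar{\textsf{A}}_{r}^{(N)}\}\bigr\|.
\end{equation*}

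The key intermediate fact is the bound $\|\textsf{X}\times_n \textbf{M}\| \leq \sigma_{\max}(\textbf{M})\,\|\textsf{X}\|$, which follows by unfolding along mode $n$ and using the matrix inequality $\|\textbf{M}\textbf{X}_{(n)}\|_F\leq\sigma_{\max}(\textbf{M})\|\textbf{X}_{(n)}\|_F$. Iterating across all $N$ modes gives
\begin{equation*}
\bigl\|\textsf{X}_0\times\{\bar{\textsf{A}}_{r}^{(1)},\dots,\bar{\textsf{A}}_{r}^{(N)}\}\bigr\|\;\leq\;\Bigl(\prod_{n=1}^N\sigma_{\max}(\bar{\textsf{A}}_{r}^{(n)})\Bigr)\,\|\textsf{X}_0\|.
\end{equation*}
Combined with submultiplicativity of the spectral norm on the matrix products $\bar{\textsf{A}}_{r}^{(n)}=\textsf{A}_{r_1}^{(n)}\cdots\textsf{A}_{r_k}^{(n)}$, we get $\sigma_{\max}(\bar{\textsf{A}}_{r}^{(n)})\leq \prod_{j=1}^k\alpha_{r_j}^{(n)}$.

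Substituting and reindexing the sum as a product over the $k$ factors,
\begin{equation*}
\|\textsf{X}_k\|\;\leq\;\|\textsf{X}_0\|\sum_{r_1,\dots,r_k=1}^{R_1}\prod_{j=1}^k\prod_{n=1}^N\alpha_{r_j}^{(n)}\;=\;\|\textsf{X}_0\|\Bigl(\sum_{r=1}^{R_1}\prod_{n=1}^N\alpha_{r}^{(n)}\Bigr)^{k}.
\end{equation*}
Stability with $\gamma=1$ then follows when $\sum_{r=1}^{R_1}\prod_{n=1}^N\alpha_r^{(n)}\leq 1$, and asymptotic stability ($\|\textsf{X}_k\|\to 0$) when the sum is strictly less than $1$. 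The main obstacle is the bookkeeping in expanding the $R_1^k$-term sum into a $k$-fold tensor power of the single-step bound $\sum_r\prod_n\alpha_r^{(n)}$; everything else reduces to the mode-product norm inequality and submultiplicativity of the spectral norm.
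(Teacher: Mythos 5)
Your proof is correct and rests on the same key inequality as the paper's, namely $\|\textsf{X}\times\{\textbf{A}_1,\dots,\textbf{A}_N\}\|\le\bigl(\prod_{n=1}^N\sigma_{\max}(\textbf{A}_n)\bigr)\|\textsf{X}\|$, which the paper imports from the literature and applies once to the one-step map $f$ to obtain the Lyapunov difference $V(f(\textsf{X}))-V(\textsf{X})\le\bigl(\sum_{r=1}^{R_1}\prod_{n=1}^N\alpha_{r}^{(n)}-1\bigr)\|\textsf{X}\|$. Your $k$-fold expansion through the explicit solution of \cref{pro:4.4} and the re-summation into $\bigl(\sum_{r}\prod_{n}\alpha_{r}^{(n)}\bigr)^{k}$ is exactly the unrolled iteration of that same one-step contraction, so the two arguments are essentially identical.
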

\begin{proof}
Let's consider $V(\textsf{X})=\|\textsf{X}\|$ as the Lyapunov function candidate and let $f(\textsf{X}) = \sum_{r=1}^{R_1}\textsf{X}\times\{\textsf{A}_{r}^{(1)},\textsf{A}_{r}^{(2)},\dots,\textsf{A}_{r}^{(N)}\}$. Then it follows that
$
V(f(\textsf{X}))-V(\textsf{X}) = \|\sum_{r=1}^{R_1}\textsf{X}\times\{\textsf{A}_{r}^{(1)},\textsf{A}_{r}^{(2)},\dots,\textsf{A}_{r}^{(N)}\}\|-\|\textsf{X}\|
\leq \sum_{r=1}^{R_1}\|\textsf{X}\times\{\textsf{A}_{r}^{(1)},\textsf{A}_{r}^{(2)},\dots,\textsf{A}_{r}^{(N)}\}\|-\|\textsf{X}\|
\leq (\sum_{r=1}^{R_1} \prod_{n=1}^N\alpha_{r}^{(n)} - 1)\|\textsf{X}\|,
$
where the last inequality is based on Theorem 6 in \cite{doi:10.1002/nla.2086}. Then the results follow immediately.
\end{proof}

\textit{Remark.} 
The computational complexity of finding the matrix SVDs of the factor matrices can be estimated as $\mathcal{O}(NJ^3R_1)$ assuming that $J_n=J$ for all $n$.

When all the Kronecker ranks of the system $R_1=R_2=R_3=1$, the MLTI system (\ref{eq45}) reduces to the Tucker product representation proposed by Surana et al. \cite{7798500}, which provides a more direct way to see that the Tucker based MLTI model is only a special case of the MLTI system (\ref{eq11}). Additionally, we can obtain stronger stability conditions for the unforced MLTI system in this case.

\begin{proposition}[Stability]\label{pro:4.6}
Suppose that $R_1=1$ in (\ref{eq45}), and $\rho^{(n)}$ are the spectral radii of $\textsf{A}_{1}^{(n)}$. Then the unforced MLTI system of (\ref{eq45}) is
 \begin{enumerate}
 \item stable if and only if $\prod_{n=1}^N\rho^{(n)}\leq1$, and when $\prod_{n=1}^N\rho^{(n)}=1$, their corresponding eigenvalues must have equal algebraic and geometric multiplicity;
 \item asymptotically stable if $\prod_{n=1}^N\rho^{(n)}< 1$;
 \item unstable if $\prod_{n=1}^N\rho^{(n)}> 1$.
 \end{enumerate}
\end{proposition}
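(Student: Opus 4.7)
The plan is to reduce this to the classical LTI stability analysis via the isomorphism $\varphi$. When $R_1 = 1$, the generalized CPD format collapses and by \cref{pro:3.2} the system coefficient is simply the outer product $\textsf{A} = \textsf{A}_{1}^{(1)} \circ \textsf{A}_{1}^{(2)} \circ \cdots \circ \textsf{A}_{1}^{(N)}$. First I would verify, directly from the definition of $\varphi$ and the index mapping $ivec$, that
\begin{equation*}
\varphi(\textsf{A}) \;=\; \textsf{A}_{1}^{(N)} \otimes \textsf{A}_{1}^{(N-1)} \otimes \cdots \otimes \textsf{A}_{1}^{(1)},
\end{equation*}
i.e. the outer product of factor matrices unfolds into their Kronecker product (up to the fixed ordering dictated by $ivec$). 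This is essentially a bookkeeping computation on indices.

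Next I would invoke the standard spectral theory of Kronecker products. If $\lambda_{k_n}^{(n)}$ ranges over the eigenvalues of $\textsf{A}_{1}^{(n)}$, then the eigenvalues of $\varphi(\textsf{A})$ are exactly the products $\prod_{n=1}^{N}\lambda_{k_n}^{(n)}$. By the group isomorphism, these are precisely the U-eigenvalues of $\textsf{A}$ (\cref{def:2.10}). Consequently the spectral radius of $\varphi(\textsf{A})$ equals $\prod_{n=1}^{N}\rho^{(n)}$. Applying \cref{pro:3.5} directly yields item (2) under strict inequality and item (3) under strict reverse inequality.

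For item (1) the additional subtlety is the algebraic/geometric multiplicity condition on unit-modulus U-eigenvalues. I would argue as follows: a product $\prod_n \lambda_{k_n}^{(n)}$ attains magnitude one exactly when $|\lambda_{k_n}^{(n)}| = \rho^{(n)}$ for every $n$, because $|\lambda_{k_n}^{(n)}| \leq \rho^{(n)}$ and $\prod_n \rho^{(n)} = 1$. It is a standard fact that if each of the spectral-radius-attaining eigenvalues of $\textsf{A}_{1}^{(n)}$ is semisimple (equal algebraic and geometric multiplicity), then the corresponding Jordan blocks of the Kronecker product are also semisimple, since the tensor product of diagonalizable matrices restricted to their semisimple eigenspaces remains diagonalizable. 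Conversely, if some $\textsf{A}_1^{(n)}$ has a nontrivial Jordan block at a spectral-radius eigenvalue, then $\varphi(\textsf{A})$ inherits a nontrivial Jordan block at a unit-modulus eigenvalue, violating the stability condition in \cref{pro:3.5}. Combining both directions with \cref{pro:3.5}(1) establishes the biconditional in (1).

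The main obstacle is the multiplicity bookkeeping in item (1): one must be careful that the claim "corresponding eigenvalues must have equal algebraic and geometric multiplicity" refers to the eigenvalues of each $\textsf{A}_1^{(n)}$ attaining $\rho^{(n)}$, and that this local semisimplicity of each factor transfers correctly to semisimplicity of the unit-modulus eigenvalues of the Kronecker product. Everything else is a routine translation between tensor and matrix pictures via $\varphi$, together with \cref{pro:3.5}.
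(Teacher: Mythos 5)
Your proof is correct and follows essentially the same route as the paper's: unfold $\textsf{A}$ to the Kronecker product $\textsf{A}_1^{(N)}\otimes\cdots\otimes\textsf{A}_1^{(1)}$ via $\varphi$, use the spectral theory of Kronecker products (U-eigenvalues as products of factor eigenvalues, with multiplicity transfer), and apply \cref{pro:3.5}. The only difference is that the paper cites references for the Kronecker-product unfolding identity and the algebraic/geometric multiplicity equivalence, whereas you sketch those arguments directly; your sketch, including the observation that unit-modulus product eigenvalues can only arise from spectral-radius-attaining factor eigenvalues, is sound.
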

\begin{proof}
Based on Equation (2.25) in \cite{doi:10.1137/110820609},
$
\varphi(\textsf{A}) = \textsf{A}_{1}^{(N)}\otimes \textsf{A}_{1}^{(N-1)}\otimes \dots \otimes \textsf{A}_{1}^{(1)}
$
where the operation $\otimes$ denotes the Kronecker product. Moreover, the U-eigenvalues of $\textsf{A}$ are equal to the products of eigenvalues of these component matrices $\textsf{A}_{1}^{(n)}$, and  the U-eigenvalues have equal algebraic and geometric multiplicities if and only if the factor eigenvalues have equal multiplicities \cite{thekroneckerproduct}. Then the results follow immediately from \cref{pro:3.5}.
\end{proof}

The above results including \cref{pro:4.1,pro:4.4,pro:4.5} can be reformulated by replacing the Kronecker rank summation by a series of TT-ranks summations if \textsf{A}, \textsf{B} and \textsf{C} are given in the generalized TTD format. Finally, the Kronecker product can be used to unfold the MLTI system (\ref{eq45}) into a LTI system, i.e. 
\begin{equation*}
\varphi(\textsf{A}) = \sum_{r=1}^{R_1} \textsf{A}_r^{(N)}\otimes\textsf{A}_r^{(N-1)}\otimes \dots \otimes \textsf{A}_r^{(1)},
\end{equation*}
and similarly for $\varphi(\textsf{B})$ and $\varphi(\textsf{C})$. Hence, one can apply traditional control theory techniques to determine the MLTI system properties. 

\section{Numerical examples}\label{sec:7}
We provide four examples to illustrate the MLTI systems theory and model reduction using the techniques developed above. All the numerical examples presented were performed on a Linux machine with 8 GB RAM and a 2.4 GHz Intel Core i5 processor and were conducted in MATLAB R2018a with the Tensor Toolbox 2.6 \cite{tensortoolbox} and the TT toolbox \cite{tttoolbox}.

\subsection{Reachability and observability tensors}
In this example, we consider a simple single-input and single-output (SISO) system that is given by (\ref{eq9}) with
$
\textbf{A}_1=\begin{bmatrix} 0 & 1 & 0\\ 0 & 0 & 1\\0.2 & 0.5 & 0.8\end{bmatrix}, \text{ }\textbf{A}_2=\begin{bmatrix} 0 & 1\\0.5 & 0\end{bmatrix},
\textbf{B}_1 = \begin{bmatrix} 0\\0\\1\end{bmatrix},\text{ } \textbf{B}_2 = \begin{bmatrix} 0 \\1 \end{bmatrix},
\textbf{C}_1 = \begin{bmatrix} 1 & 0 & 0 \end{bmatrix}, \text{ }\textbf{C}_2 = \begin{bmatrix} 1 & 0 \end{bmatrix},
$
and the states $\textsf{X}_t \in \mathbb{R}^{3\times 2}$ are second-order tensors, i.e. matrices. The product of the two spectral radii of $\textbf{A}_1$ and $\textbf{A}_2$ is 0.9207, which implies that the system is asymptotically stable. In addition, the reachability and observability tensors based on (\ref{eq:3.7}) and (\ref{eq:3.10}) are given by
\begin{equation*}
\begin{split}
\mathscr{R}_{::11} &=
\begin{bmatrix}
0 & 0 & 0\\
0 & 1 & 0\\
0 & 0.8 & 0
\end{bmatrix}, \hspace{1.1cm}
\mathscr{R}_{::21} =
\begin{bmatrix}
0 & 0 & 0.5\\
0 & 0 & 0.4\\
1 & 0 & 0.57
\end{bmatrix},\\
\mathscr{R}_{::12} &=
\begin{bmatrix}
0.4 & 0 & 0.378\\
0.57 & 0 & 0.4849\\
0.756 & 0 & 0.6339
\end{bmatrix},
\mathscr{R}_{::22} =
\begin{bmatrix}
0 & 0.285 & 0\\
0 & 0.378 & 0\\
0 & 0.4849 & 0
\end{bmatrix},\\
\end{split}
\end{equation*}
and
\begin{equation*}
\begin{split}
\mathscr{O}_{::11} &=
\begin{bmatrix}
1 & 0 & 0\\
0 & 0 & 0\\
0 & 0 & 0.5
\end{bmatrix}, 
\mathscr{O}_{::21} =
\begin{bmatrix}
0 & 0 & 0\\
0.04 & 0.15 & 0.285\\
0 & 0 & 0
\end{bmatrix},\\
\mathscr{O}_{::12} &=
\begin{bmatrix}
0 & 0 & 0\\
0 & 1 & 0\\
0 & 0 & 0
\end{bmatrix},\hspace{0.3cm}
\mathscr{O}_{::22} =
\begin{bmatrix}
0.1 & 0.25 & 0.4\\
0 & 0 & 0\\
0.057 & 0.1825 & 0.378
\end{bmatrix},\\
\end{split}
\end{equation*}
respectively. We compute the TTDs of the permuted tensors $\tilde{\mathscr{R}}$ and $\tilde{\mathscr{O}}$, respectively and observe that $\text{rank}_U(\mathscr{R})=6$ and $\text{rank}_U(\mathscr{O})=6$. The system therefore is both reachable and observable.

\subsection{Kronecker rank/TT-ranks approximation}\label{sec:7.3}
In this example, we consider a SISO MLTI system (\ref{eq11}) with random sparse tensors $\textsf{A}\in\mathbb{R}^{3\times 3\times 3\times 3\times 3\times 3}$, $\textsf{B}\in\mathbb{R}^{3\times 3\times 3}$ and $\textsf{C}\in\mathbb{R}^{3\times 3\times 3}$. According to \cref{alg:5.3}, we compute the generalized CPDs of \textsf{A}, \textsf{B} and \textsf{C} using the tensor toolbox function \texttt{cp\_als} with estimated Kronecker ranks $R_1=49$, $R_2=2$ and $R_3=2$, respectively, see Generalized CPD in \cref{tab:6.1}. Note that the number of parameters in the system with full Kronekcer ranks could be greater than that for the original system. We then fix $R_2$ and $R_3$ and gradually truncate $R_1$, since $R_1$ is most critical in determining the number of parameters of the reduced system. As we can see in the table, the number of parameters decreases dramatically as $R_1$ decreases. In order to assess the approximation error resulting from this truncation, we compute the relative error using the $\mathcal{H}$-infinity norm $\|\cdot\|_\infty$ between the full system and reduced system transfer functions based on (\ref{eq:3.20}). In particular, we find that when $R_1=10$, the reduced MLTI system is still close to the original system with $\mathcal{H}$-infinity norm relative error of 0.0888.

\begin{figure}[t]\label{fig:6.2}
\centering
\includegraphics[scale=0.32]{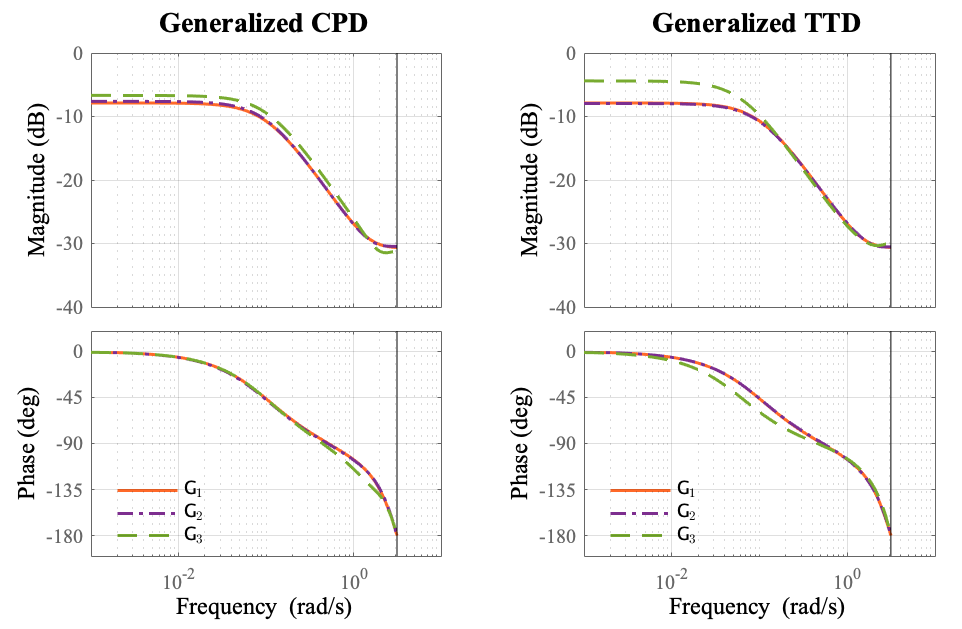}
\caption{\textbf{Bode Diagrams.} $\textsf{G}_1$, $\textsf{G}_2$ and $\textsf{G}_3$ are the transfer functions for the three reduced MLTI systems corresponding to \cref{tab:6.1}, respectively. One may view $\textsf{G}_1$ as the transfer function of the original system. Since the function \texttt{cp\_als} is not numerically stable, the results may not be exactly consistent with Table \ref{tab:6.1} for those obtained by generalized CPD.
} 
\end{figure}

\begin{table}[tbhp]\label{tab:6.1}
\caption{Kronecker rank/TT-ranks approximations of the MLTI system. We omit the first and last trivial TT-ranks in the generalized TTDs of \textsf{A}, \textsf{B} and \textsf{C}.}
{\footnotesize
\begin{center}
\begin{tabular}{|c|c|c|c|} \hline
 & \bf Reduced Ranks &\bf \# Parameters & \bf  $ \frac{\|\textsf{G}_{\text{full}} - \textsf{G}_{\text{red}}\|_{\infty}}{\|\textsf{G}_{\text{full}}\|_\infty}$\\ \hline
 Full System & - & 783 & -\\\hline
Generalized CPD & $\begin{matrix} 49,2,2\\ 20,2,2 \\ 10,2,2\end{matrix}$&$\begin{matrix} 1359\\ 576 \\ 306\end{matrix}$ & $\begin{matrix} 1.58\times 10^{-10}\\ 0.0223 \\ 0.0888\end{matrix}$ \\ \hline
Generalized TTD &$\begin{matrix} \{7,8\},\{1,2\},\{2,2\}\\  \{7,6\},\{1,2\},\{2,2\} \\  \{7,5\},\{1,2\},\{2,2\}\end{matrix}$ & $\begin{matrix} 678\\ 534 \\ 462\end{matrix}$  & $\begin{matrix} 4.39\times 10^{-15}\\ 0.0099 \\ 0.4911\end{matrix}$\\
\hline
\end{tabular}
\end{center}
}
\end{table}

We repeat a similar process for TT-ranks approximation through generalized TTD (see \cref{alg:5.4}). The results are shown in the same table. We find that both generalized CPD and TTD can achieve efficient model reduction while keeping the approximation errors low. Generalized TTD in particular achieves better accuracy for a similar number of reduced parameters as compared to generalized CPD, but the latter can maintain a resonable approximation error with an even lower number of parameters. The Bode diagrams for the reduced MLTI systems are shown in \cref{fig:6.2}. Note that in this example, we manually selected the truncation to study the tradeoff between number of parameters in the reduced system and the approximation error.

\subsection{Memory consumption comparison}\label{sec:7.4}
In this example, we consider a multiple-input and multiple-output (MIMO) MLTI system (\ref{eq11}) with random even-order paired tensors $\textsf{A},\textsf{B},\textsf{C}\in\mathbb{R}^{6\times 6\times 6\times 6\times 6\times 6}$ that possess low TT-ranks. We compare the memory consumptions of the generalized TTD based representation (\ref{eq45}) with the reduced models obtained from the unfolding based balanced truncation. The results are shown in \cref{tab:6.2}. One can clearly  see that if the MLTI systems possess low TT-ranks structure,  the generalized TTD based approach achieves much better accuracy for a similar number of parameters as compared to balanced truncation.

\begin{table}[tbhp]\label{tab:6.2}
\caption{Memory consumption comparison between the generalized TTD and balanced truncation based methods. We reported the TT-ranks of \textsf{A}, \textsf{B} and \textsf{C} (ignoring the first and last trivial TT-ranks) and the number of singular values retained in the Hankel matrix during the balanced truncation.}
{\footnotesize
\begin{center}
\begin{tabular}{|c|c|c|c|} \hline
 & \bf  Ranks &\bf \# Parameters & \bf  $ \frac{\|\textsf{G}_{\text{full}} - \textsf{G}_{\text{red}}\|_{\infty}}{\|\textsf{G}_{\text{full}}\|_\infty}$\\ \hline
  Full System & - & 139968 & -\\\hline
Generalized TTD & \{6,6\}, \{6,6\}, \{6,6\} &  5184 & $3.98\times 10^{-15}$\\ \hline
Balanced Truncation & $\begin{matrix}  200 \\ 100 \\40\end{matrix}$ & $\begin{matrix}  120000 \\ 30000\\ 4800\end{matrix}$ & $\begin{matrix}  0.0169 \\ 0.1001\\ 0.2360\end{matrix}$\\
\hline
\end{tabular}
\end{center}
}
\end{table}

\subsection{Computational time comparison} \label{sec:7.2}
In this example, we consider unforced MLTI systems (\ref{eq45}) with random sparse even-order paired tensors $\textsf{A}\in\mathbb{R}^{2\times 2\times \stackrel{\scriptscriptstyle n}{\cdots} \times 2\times 2}$ in the generalized TTD format such that $\varphi(\textsf{A}) \in\mathbb{R}^{2^n\times 2^n}$. We compare the run time of \cref{coro:3.7} with the matrix SVD of $\varphi(\textsf{A})$ for determining the stability of the systems. The results are shown in \cref{tab:7.2}. When $n \geq 10$, the TTD based method for finding the largest singular value of $\varphi(\textsf{A})$ exhibits a signficant time advantage compared to the matrix SVD based method for which the time increases exponentially. 

\begin{table}[tbhp]\label{tab:7.2}
\caption{Run time comparison between the TTD and SVD based methods in finding the largest singular value of $\varphi(\textsf{A})$. For the TTD based method, we reported computational time includes conversion from the generalized TTD of \textsf{A} to the TTD of $\tilde{\textsf{A}}$ and left- and right-orthonormalization.}

{\footnotesize
\begin{center}
\begin{tabular}{|c|c|c|c|c|c|} \hline
 $\boldsymbol{n}$& \bf TTD(s) &\bf SVD(s) & \bf  $\boldsymbol{\sigma}_{\max}$ & \bf Relative error & \bf Stability\\ \hline
 6 & 0.0399 & $6.8551\times 10^{-4}$& 0.8082 & $1.3738\times 10^{-16} $& asy. stable\\ \hline
 8 & 0.0491 & 0.0439 & 0.9626 & $4.1523\times 10^{-15}$ & asy. stable\\\hline
 10 & 0.0591 & 0.4979 & 0.8645 & $3.8527\times 10^{-15}$ & asy. stable \\\hline
 12 & 0.0909 & 30.7663 & 0.8485 & $5.7573\times 10^{-15}$ & asy. stable \\\hline
 14 & 0.2623 & 2115.1 & 0.9984 & $1.3566\times 10^{-14}$ & asy. stable \\\hline
\end{tabular}
\end{center}
}
\end{table}

\section{Discussion}\label{sec:8}
While tensor unfolding to a matrix form provides the advantage of leveraging highly optimized matrix algebra libraries,  in doing so however one may not be able to exploit the higher-order hidden patterns/structures, e.g. redundancy/correlations, present in the tensor. For instance, in the context of solving PDEs, Brazell et al. \cite{doi:10.1137/100804577} found that higher-order tensor representations preserve low bandwidth, thereby keeping the computational cost and memory requirement low. As shown in \cref{sec:7.2,sec:7.4}, TTD based methods are more efficient in terms of computational speed and memory requirements compared to unfolding based methods when the MLTI systems have low TT-ranks structure. Although CPD typically offers better compression than TTD, the computation of CP rank is NP-hard, and the lower rank approximations can be ill-posed \cite{doi:10.1137/06066518X}. TTD is more suitable for numerical computations with well developed TT-algebra \cite{doi:10.1137/090752286}. Basic tensor operations such as addition, the Einstein product, Frobenius norm, block tensor, solution to multilinear equations and tensor pseudoinverse, can be computed and maintained in the TTD format, without requiring full tensor representation. This can provide significant computational advantages in finding the reachability/observability tensors and associated unfolding ranks according to \cref{coro:3.13,cor:5.20}, and in obtaining solution of the tensor Lyapunov equations. For details, we refer the reader to \cite{chen_2019} and the references therein.

Another line of approach is to exploit the isomorphism property to build algorithms directly in the full tensor format from existing methods. For example, Brazell et al. \cite{doi:10.1137/100804577} proposed Higher-Order Biconjugate Gradient (HOBG) method for solving multilinear systems which can be used for computing U-inverses and MLTI system transfer functions. Analogously, one can generalize the matrix based Rayleigh Quotient Iteration method for computing U-eigenvalues (which can be used for determining MLTI system stability) directly in the tensor form, see \cref{alg:5.2}. However, the computational efficiency of this type of method remains to be investigated. Finally, one can combine tensor algebra based and matrix based methods to provide the advantages of both approaches as hybrid methods, see some examples in \cite{chen_2019} in the context of MLTI model reduction. In future, it would be worthwhile to systematically explore which of the above mentioned approaches or combination thereof is best given the problem structure.

\section{Conclusion}\label{sec:9}

In this paper, we provided a comprehensive treatment of a newly introduced MLTI system representation using even-order paired tensors and the Einstein product. We established new results which enable one to express tensor unfolding based stability, reachability and observability criteria in terms of more standard notions of tensor ranks/decompositions. We introduced a generalized CPD/TTD based model reduction framework which can significantly reduce the number of MLTI system parameters and realize the tensor decomposition based methods. We also presented computational complexity analysis of our proposed framework, and illustrated the benefits through numerical examples. In particular, TTD offers several computational advantages over CPD and HOSVD, and provides a good representational choice for facilitating numerical computations associated with MLTI systems.

As mentioned in \cref{sec:8}, more work is required to fully realize the potential of tensor algebra based computations for MLTI systems.  It will also be worthwhile to develop theoretical and computational framework for observer and feedback control design for MLTI systems, and apply these techniques in real world complex systems. One particular application we plan to investigate is that of cellular reprogramming which involves introducing transcription factors as a control mechanism to transform one cell type to another. These systems naturally have matrix or tensor state spaces describing their genome-wide structure and gene expression \cite{LIU2018,Ronquist11832}. Such applications would also need to account for nonlinearity and stochasticity in tensor based dynamical system representation and analysis framework, and is an important direction for future research.

\appendix
\section{Additional tensor algebra}
\subsection{M-positive definiteness/rank-one positive definiteness}\label{app:1}

\begin{definition}
An even-order square tensor $\textsf{A}\in \mathbb{R}^{J_1\times J_1 \times \dots \times J_N\times J_N}$ is called M-positive definite if the multilinear functional
\begin{equation}\label{eq:2.16}
\textsf{A}\times \{\textbf{x}_1^\top,\textbf{x}_1^\top,\dots,\textbf{x}_N^\top,\textbf{x}_N^\top\}>0,
\end{equation}
for any nonzero vector $\textbf{x}_n$. If all $\textbf{x}_n$ are equal, $\textsf{A}$ is called rank-one positive definite.
\end{definition}

\begin{proposition}\label{pro:2.7}
If an even-order square tensor $\textsf{A} \in \mathbb{R}^{J_1\times J_1 \times \dots \times J_N\times J_N}$ is U-positive definite, it is M-positive definite. Moreover, if $J_1=J_2=\dots =J_N$, U-positive definiteness also implies rank-one positive definiteness.
\end{proposition}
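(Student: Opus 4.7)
}
The plan is to realize the M-positive definite condition (\ref{eq:2.16}) as a special instance of the U-positive definite condition by feeding rank-one test tensors into the latter. Concretely, given any nonzero vectors $\textbf{x}_n\in\mathbb{R}^{J_n}$ for $n=1,\dots,N$, I would form the rank-one tensor
\begin{equation*}
\textsf{X} \;=\; \textbf{x}_1\circ \textbf{x}_2\circ \dots \circ \textbf{x}_N \;\in\;\mathbb{R}^{J_1\times J_2\times \dots\times J_N},
\end{equation*}
which is nonzero since each factor is nonzero.

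The key step is then a direct index expansion showing that, for this choice of $\textsf{X}$, the scalar $\textsf{X}^{\top}*\textsf{A}*\textsf{X}$ coincides with the multilinear functional on the left-hand side of (\ref{eq:2.16}). Substituting $\textsf{X}_{i_1\dots i_N}=x_{1,i_1}\cdots x_{N,i_N}$ into the Einstein product definition (\ref{eq:0}) yields
\begin{equation*}
\textsf{X}^{\top}*\textsf{A}*\textsf{X} \;=\; \sum_{j_1,i_1,\dots,j_N,i_N} \textsf{A}_{j_1 i_1\dots j_N i_N}\, x_{1,j_1}x_{1,i_1}\cdots x_{N,j_N}x_{N,i_N},
\end{equation*}
which is precisely $\textsf{A}\times\{\textbf{x}_1^{\top},\textbf{x}_1^{\top},\dots,\textbf{x}_N^{\top},\textbf{x}_N^{\top}\}$. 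The same identification can be read off from \Cref{lem:2.9} by taking each $\textbf{U}_n=\textbf{V}_n=\textbf{x}_n^{\top}$ as a $1\times J_n$ row matrix, so that $\textsf{U}=\textsf{V}^{\top}=\textsf{X}^{\top}$ up to the trivial singleton modes introduced by treating $N$-th order tensors as even-order paired tensors. The U-positive definiteness hypothesis then forces the right-hand side to be strictly positive, establishing M-positive definiteness.

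For the second assertion, when $J_1=J_2=\dots=J_N$ the rank-one positive definiteness condition is exactly the M-positive definiteness condition specialized to the diagonal $\textbf{x}_1=\dots=\textbf{x}_N=\textbf{x}$, so it follows at once from the first part by choosing $\textsf{X}=\textbf{x}\circ\textbf{x}\circ \dots\circ\textbf{x}$. I do not foresee a real obstacle: the only point requiring care is the bookkeeping in the index expansion and the verification that a rank-one test tensor built from nonzero vectors is itself nonzero, so that it is a legitimate witness in the U-positive definiteness statement. The converse directions (e.g.\ whether M-positive definiteness implies U-positive definiteness) are not claimed here and would require more than rank-one test tensors, but that is outside the scope of this proposition.
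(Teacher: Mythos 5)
Your proposal is correct and follows essentially the same route as the paper: both reduce M-positive definiteness to the U-positive definite condition tested on the nonzero rank-one tensor $\textsf{X}=\textbf{x}_1\circ\dots\circ\textbf{x}_N$ via the identity $\textsf{A}\times\{\textbf{x}_1^{\top},\textbf{x}_1^{\top},\dots,\textbf{x}_N^{\top},\textbf{x}_N^{\top}\}=\textsf{X}^{\top}*\textsf{A}*\textsf{X}$ (the paper invokes \cref{lem:2.9}, you also verify it by direct index expansion), and both obtain the rank-one case by specializing to equal vectors when $J_1=\dots=J_N$.
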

\begin{proof}
By \cref{lem:2.9}, it follows that
$
\textsf{A}\times \{\textbf{x}_1^\top,\textbf{x}_1^\top,\dots,\textbf{x}_N^\top,\textbf{x}_N^\top\}= \textsf{X}^\top *\textsf{A}*\textsf{X}
$
for $\textsf{X} = \textbf{x}_1\circ \textbf{x}_2 \circ \dots \circ \textbf{x}_N$, i.e. $\textsf{X}$ is a rank-one tensor. Moreover, if $J_1=J_2=\dots =J_N$,  M-positive definiteness implies rank-one positive definiteness \cite{Liqun2009ConditionsFS}. Therefore, the results follow immediately. 
\end{proof}

\subsection{Block tensor properties}\label{app:2}
\begin{proposition}\label{pro:2.9}
Let $\textsf{A},\textsf{B}\in\mathbb{R}^{J_1\times I_1\times \dots \times J_N\times I_N}$ and $\textsf{C},\textsf{D}\in \mathbb{R}^{I_1\times K_1\times \dots \times I_N\times K_N}$. Then the following properties hold:
\begin{enumerate}
\item $\mathcal{P}*\begin{vmatrix}
	\textsf{A} & \textsf{B}
\end{vmatrix}_n=\begin{vmatrix}
	\textsf{P}*\textsf{A} & \textsf{P}*\textsf{B}
\end{vmatrix}_n$ for any $\textsf{P}\in \mathbb{R}^{L_1\times J_1\times \dots \times L_N\times J_N}$;
\item $
\begin{vmatrix}
	\textsf{C}\\
	\textsf{D}
\end{vmatrix}_n*\textsf{Q}=
\begin{vmatrix}
	\textsf{C}*\textsf{Q}\\
	\textsf{D}*\textsf{Q}
\end{vmatrix}_n$ for any $\textsf{Q}\in \mathbb{R}^{K_1\times R_1\times \dots \times K_N\times R_N}$;
\item $\begin{vmatrix}
	\textsf{A} & \textsf{B}
\end{vmatrix}_n*
\begin{vmatrix}
	\textsf{C}\\
	\textsf{D}
\end{vmatrix}_n=\textsf{A}*\textsf{C}+\textsf{B}*\textsf{D}$.
\end{enumerate}
\end{proposition}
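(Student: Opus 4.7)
The plan is to verify each of the three identities by direct unpacking of the Einstein product formula (\ref{eq10}) and the $n$-mode block tensor \cref{def:nmodedef}, in direct analogy with how one checks the elementary block matrix identities $\mathbf{P}[\mathbf{A}\ \mathbf{B}] = [\mathbf{P}\mathbf{A}\ \mathbf{P}\mathbf{B}]$, $[\mathbf{C};\mathbf{D}]\mathbf{Q} = [\mathbf{C}\mathbf{Q};\mathbf{D}\mathbf{Q}]$, and $[\mathbf{A}\ \mathbf{B}][\mathbf{C};\mathbf{D}] = \mathbf{A}\mathbf{C}+\mathbf{B}\mathbf{D}$.

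For property~(1), I would fix an output multi-index of $\mathcal{P}*\begin{vmatrix}\textsf{A}&\textsf{B}\end{vmatrix}_n$ and expand the Einstein product. The mode-$n$ contracted index runs over $\{1,\dots,2I_n\}$, so I split the sum at $I_n$: for $l_n\le I_n$ the block tensor returns the corresponding entry of $\textsf{A}$, producing a contribution equal to $(\textsf{P}*\textsf{A})_{\ldots}$, and for $l_n>I_n$ it returns (after the shift $l_n\mapsto l_n-I_n$) the corresponding entry of $\textsf{B}$, producing $(\textsf{P}*\textsf{B})_{\ldots}$. Reading these two contributions back as a mode-$n$ row block tensor yields exactly the right-hand side. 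Property~(2) is symmetric: the doubled dimension is now in the non-contracted (``row'') slot of mode $n$, so splitting that outer index between the $\textsf{C}$-block and the $\textsf{D}$-block separates the single Einstein product into two copies that share a common contracted index, giving the claimed column block. Property~(3) combines both steps: at a fixed output multi-index, the mode-$n$ contraction index runs up to $2I_n$, and splitting at $I_n$ using both block tensor definitions rewrites the two halves as $(\textsf{A}*\textsf{C})_{\ldots}$ and $(\textsf{B}*\textsf{D})_{\ldots}$, whose sum is the stated right-hand side.

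A cleaner alternative, which I might actually prefer to write up, is to push through the isomorphism $\varphi$. As noted in the paragraph following \cref{def:nmodedef}, the $n$-mode block tensors map under $\varphi$ to ordinary block matrices up to a fixed permutation that preserves block structure, and $\varphi$ converts the Einstein product into matrix multiplication. The three identities then collapse to the three standard block matrix identities listed above. The only real obstacle, and where I expect the bookkeeping to be delicate, is checking once and for all that the index permutation induced by $\varphi$ is compatible with the $n$-mode concatenation for every $n$; once this compatibility is established, all three equalities are immediate.
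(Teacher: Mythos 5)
Your direct index-splitting argument is exactly what the paper does: its proof simply states that all three identities follow immediately from the definitions of the $n$-mode row/column block tensors and the Einstein product, which is the computation you describe. One small terminological slip worth fixing: in property (1) the doubled mode-$n$ index $l_n$ is a \emph{free} output index rather than a contracted one (it is only in property (3) that the doubled index is actually summed over), but your case analysis on $l_n \le I_n$ versus $l_n > I_n$ is the correct computation in either situation, so the argument goes through as written.
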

\begin{proof}
The proof follows immediately from the definition of $n$-mode row/column block tensors and the Einstein product.
\end{proof}

\begin{proposition}\label{pro6}
Let  $\textsf{A},\textsf{B}\in\mathbb{R}^{J_1\times I_1\times \dots \times J_N\times I_N}$ be two even-order paired tensors. Then
$
\varphi(\begin{vmatrix}
	\textsf{A} & \textsf{B}
\end{vmatrix}_n) =
\begin{bmatrix}
	\varphi(\textsf{A}) & \varphi(\textsf{B})
\end{bmatrix}\textbf{P},
$
where $\textbf{P}$ is a column permutation matrix. In particular, when $I_n=1$ for all $n$ or $n=N$, $\textbf{P}$ is the identity matrix.
\end{proposition}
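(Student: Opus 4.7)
The plan is to derive both sides of the identity directly from the definitions of $\varphi$ (equivalently $ivec$) and the $n$-mode row block tensor, and observe that they have identical row index sets and carry the same underlying scalar entries; only the ordering of the columns can differ. Writing $\mathcal{I}^\ast=\{I_1,\dots,I_{n-1},2I_n,I_{n+1},\dots,I_N\}$, both matrices have $\Pi_{\mathcal{J}}$ rows indexed by $ivec(\mathbf{j},\mathcal{J})$ and $2\Pi_{\mathcal{I}}$ columns, so the claim reduces to producing an explicit bijection between the column index set of $\bigl[\varphi(\textsf{A})\ \varphi(\textsf{B})\bigr]$ and that of $\varphi(\bigl|\textsf{A}\ \textsf{B}\bigr|_n)$, which then defines $\mathbf{P}$.

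First I would track where a given column of the block matrix lands in the unfolded block tensor. A column of $\varphi(\textsf{A})$ indexed by $\mathbf{i}=(i_1,\dots,i_N)$ with $i_k\in\{1,\dots,I_k\}$ occupies position $ivec(\mathbf{i},\mathcal{I})$ in $\bigl[\varphi(\textsf{A})\ \varphi(\textsf{B})\bigr]$; by \cref{def:nmodedef}, that same data is stored in $\bigl|\textsf{A}\ \textsf{B}\bigr|_n$ with identical index tuple, hence lives in column $ivec(\mathbf{i},\mathcal{I}^\ast)$ of its unfolding. A column of $\varphi(\textsf{B})$ indexed by $\mathbf{i}$ occupies position $\Pi_{\mathcal{I}}+ivec(\mathbf{i},\mathcal{I})$ in the block matrix, while in the unfolded block tensor the corresponding tuple has its $n$-th entry shifted to $I_n+i_n$, hence lives in column $ivec\bigl((i_1,\dots,i_{n-1},I_n+i_n,i_{n+1},\dots,i_N),\mathcal{I}^\ast\bigr)$. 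This pair of assignments exhausts all $2\Pi_{\mathcal{I}}$ column indices on each side and is clearly a bijection, so setting $\mathbf{P}$ to be the corresponding $2\Pi_{\mathcal{I}}\times 2\Pi_{\mathcal{I}}$ permutation matrix gives the claimed identity.

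For the two special cases I would specialize the $ivec$ formulas. When $n=N$, no index beyond the $n$-th one appears in the $ivec$ sum, so doubling $I_N$ does not alter the stride of any earlier index; an explicit expansion yields $ivec(\mathbf{i},\mathcal{I}^\ast)=ivec(\mathbf{i},\mathcal{I})$ for $\textsf{A}$-columns and $ivec(\mathbf{i}',\mathcal{I}^\ast)=\Pi_{\mathcal{I}}+ivec(\mathbf{i},\mathcal{I})$ for $\textsf{B}$-columns (where $\mathbf{i}'$ denotes the shifted tuple), so the first $\Pi_{\mathcal{I}}$ columns come entirely from $\textsf{A}$ and the next $\Pi_{\mathcal{I}}$ entirely from $\textsf{B}$, giving $\mathbf{P}=\mathbf{I}$. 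When all $I_k=1$, both sides have only two columns corresponding to $i_n=1$ and $i_n=2$; the ordering matches trivially and again $\mathbf{P}=\mathbf{I}$.

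The proof is essentially a bookkeeping argument; the only mild subtlety is confirming that the interleaving of $\textsf{A}$ and $\textsf{B}$ columns for intermediate $n$ comes precisely from the change of stride of indices $i_{n+1},\dots,i_N$ when $I_n$ is doubled in $\mathcal{I}^\ast$, and that this interleaving vanishes exactly in the two regimes identified. No deep machinery is required beyond the definitions of $ivec$, $\varphi$, and the $n$-mode row block tensor from \cref{def:nmodedef}.
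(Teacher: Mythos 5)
Your proposal is correct and follows essentially the same route as the paper's proof: a direct index-bookkeeping argument comparing the column map $ivec(\cdot,\mathcal{I})$ of the block matrix with $ivec(\cdot,\mathcal{I}^\ast)$ of the unfolded block tensor, with the two special cases checked by noting that doubling $I_n$ only changes the strides of indices after mode $n$. The only difference is one of presentation: the paper works out the $N=2$, $n=1,2$ cases explicitly and asserts the general case is similar, whereas you carry out the general-$N$ bookkeeping directly, which is slightly more complete.
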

\begin{proof}
We consider the case for $N=2$. Since the size of the odd modes of the block tensor remains the same, we only need to consider the even modes' unfolding transformation. When $n=1$, the index mapping function for the even modes is 
\begin{equation*}
ivec(\textbf{i},\mathcal{I}) = i_1 + 2(i_2-1)I_1,
\end{equation*}
for $i_1 = 1,2,\dots,2I_1$. Based on the definition of $n$-mode row block tensors, the first $I_1$ columns of $\varphi(\begin{vmatrix}
	\textsf{A} & \textsf{B}
\end{vmatrix}_1)$ are the vectorizations of $\textsf{A}_{:i_1:i_2}$ for $i_1=1,2,\dots,I_1$ and $i_2=1$, and the second $I_1$ columns are the vectorizations of $\textsf{B}_{:i_1:i_2}$ for $i_1=I_1+1,I_1+2,\dots,2I_1$ and $i_2=1$. The alternating pattern continues for all $I_2$ pairs of $I_1$ columns. Hence, $\varphi(\begin{vmatrix}
	\textsf{A} & \textsf{B}
\end{vmatrix}_1) =
\begin{bmatrix}
	\varphi(\textsf{A}) & \varphi(\textsf{B})
\end{bmatrix}\textbf{P}$ for some column permutation matrix $\textbf{P}$. 
When $n=2$, the index mapping function for the even modes is given by
\begin{equation*}
ivec(\textbf{i},\mathcal{I}) = i_1 + (i_2-1)I_1,
\end{equation*}
for $i_2=1,2,\dots, 2I_2$. Similarly, the first $I_1I_2$ columns of $\varphi(\begin{vmatrix}
	\textsf{A} & \textsf{B}
\end{vmatrix}_2)$ are the vectorizations of $\textsf{A}_{:i_1:i_2}$ for $i_1=1,2,\dots,I_1$ and $i_2=1,2,\dots,I_2$, and the second $I_1I_2$ columns are the vectorizations of $\textsf{B}_{:i_1:i_2}$ for $i_1=1,2,\dots,I_1$ and $i_2=I_2+1,I_2+2,\dots,2I_2$. Hence, $\varphi(\begin{vmatrix}
	\textsf{A} & \textsf{B}
\end{vmatrix}_2) =
\begin{bmatrix}
	\varphi(\textsf{A}) & \varphi(\textsf{B})
\end{bmatrix}$. A similar analysis can be used to prove the case for $N>2$. Moreover, when $I_n=1$ for all $n$, $\varphi(\textsf{A})$ and $\varphi(\textsf{B})$ are vectors, so no permutation needs to be considered. The proposition can be considered as a special case of Theorem 3.3 in \cite{doi:10.1137/110820609}.
\end{proof}

\section{Tensor ranks/decompositions proofs}\label{app:3}
\subsection{Proof of \cref{pro:2.95}}\label{proof:2.95}
\setcounter{MaxMatrixCols}{20}
\begingroup
\setlength\arraycolsep{2pt}
Without loss of generality, assume that $\Pi_{\mathcal{I}}\leq \Pi_{\mathcal{J}}$ and $\text{rank}_U(\textsf{A})=\Pi_{\mathcal{I}}$. Then $\varphi(\textsf{A})$ has $\Pi_{\mathcal{I}}$ linearly independent columns. The goal here is to construct a transformation from $\varphi(\textsf{A})$ to $\textbf{A}_{(2n)}^\top$, which can be easily visualized through the representation $(z,\mathbb{S})$ defined in (\ref{eq20}). Let
\begin{equation*}
    \begin{split}
        \mathbb{S}_1 & = \footnotesize{\begin{pmatrix}1 & 2 & \dots & N & N+1 & N+2 & \dots & 2N\\1&3&\dots&2N-1&2&4&\dots&2N\end{pmatrix}},\\
        \mathbb{S}_2 & = \footnotesize{\begin{pmatrix}1 & 2 & \dots & 2n-1 & 2n & \dots & 2N-1 & 2N\\1&2&\dots&2n-1&2n+1&\dots&2N&2n\end{pmatrix}},\\
        \mathbb{S}_3 & = \footnotesize{\begin{pmatrix}1& 2 & \dots & N & N+1 & N+2 & \dots & N+n & N+n+1 & \dots & 2N\\1&3&\dots&2N-1&2n&2&\dots&2n-2&2n+2&\dots&2N\end{pmatrix}},\\
        \mathbb{S}_4 & = \footnotesize{\begin{pmatrix}1& 2 & \dots & N & N+1  & \dots & N+n-1 & N+n & \dots & 2N-1 & 2N\\1&3&\dots&2N-1&2&\dots&2n-2&2n+2&\dots&2N&2n\end{pmatrix}}.
    \end{split}
\end{equation*} 
Clearly, $\varphi(\textsf{A})$ and $\textbf{A}_{(2n)}^\top$ can be represented by $(N,\mathbb{S}_1)$ and $(2N-1,\mathbb{S}_2)$, respectively. According to the definition of the index mapping function $ivec(\textbf{i},\mathcal{I})$, we first require a column permutation matrix $\textbf{P}$ such that $\varphi(\textsf{A})\textbf{P}$ is represented by $(N,\mathbb{S}_3)$. Every $I_{n}$ columns of $\varphi(\textsf{A})\textbf{P}$ correspond to the columns of $\textbf{A}_{(2n)}^\top$. Collect each set of $I_{n}$ columns of $\varphi(\textsf{A})\textbf{P}$ and stack them vertically to form a tall matrix $\tilde{\textbf{A}}$ with the representation $(2N-1,\mathbb{S}_4)$. Since the columns of $\varphi(\textsf{A})\textbf{P}$ are linearly independent, $\text{rank}(\tilde{\textbf{A}})=I_n$. Finally, according to the definition of the index mapping function $ivec(\textbf{j},\mathcal{J})$, we require a row permutation matrix $\textbf{Q}$ such that $\textbf{Q}\tilde{\textbf{A}} = \textbf{A}_{(2n)}^\top$. Hence, $\text{rank}_{2n}(\textsf{A})=\text{rank}(\textbf{A}_{(2n)}^\top)=I_n$. Note that the converse of the statement is incorrect.
\endgroup

\subsection{Proof of \cref{pro:2.11}}\label{proof:2.11}
In order to prove \cref{pro:2.11}, we need to introduce the concept of Khatri-Rao product.
\begin{definition}
Given two matrices $\textbf{A}\in  \mathbb{R}^{J\times I}$ and $\textbf{B}\in  \mathbb{R}^{K\times I}$, the Khatri-Rao product, denoted by $\textbf{A}\odot \textbf{B}$, results in a $JK\times I$ matrix:
\begin{equation*}
\textbf{A}\odot \textbf{B}=\begin{bmatrix}
\textbf{a}_{1}\otimes \textbf{b}_{1} & \textbf{a}_{2}\otimes \textbf{b}_{2} & \dots & \textbf{a}_{I}\otimes \textbf{b}_{I}
\end{bmatrix},
\end{equation*}
where, $\otimes$ denotes the Kronecker product, and $\textbf{a}_n$ and $\textbf{b}_n$ are the column vectors of \textbf{A} and \textbf{B}, respectively.
\end{definition}

The following lemma provided by Sidiropoulos et al. \cite{852018,STEGEMAN2007540} gives some properties of rank and $k$-rank of the Khatri-Rao product $\textbf{A}\odot \textbf{B}$.

\begin{lemma} \label{lem:7.2}
Given two matrices $\textbf{A}\in\mathbb{R}^{J\times R},\textbf{B}\in\mathbb{R}^{I\times R}$, the Khatri-Rao product $\textbf{A}\odot\textbf{B}$ has column rank $R$ if
$
k_{\textbf{A}} + k_{\textbf{B}}  \geq R+1
$
for $k_{\textbf{A}}, k_{\textbf{B}}\geq 1$. Moreover, $k_{\textbf{A}\odot\textbf{B}} \geq \min{\{k_{\textbf{A}}+k_{\textbf{B}}-1,R\}}$.
\end{lemma}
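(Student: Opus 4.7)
The plan is to prove the stronger second claim first (the lower bound on $k_{\textbf{A}\odot\textbf{B}}$) and deduce the first claim as an immediate corollary: if $k_\textbf{A}+k_\textbf{B}\geq R+1$, then $k_{\textbf{A}\odot\textbf{B}}\geq \min(k_\textbf{A}+k_\textbf{B}-1,R)=R$, which says that all $R$ columns of $\textbf{A}\odot\textbf{B}$ are linearly independent, i.e., $\textbf{A}\odot\textbf{B}$ has full column rank $R$.

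For the $k$-rank bound, I would argue by contradiction. Suppose some set of $t$ columns of $\textbf{A}\odot\textbf{B}$ indexed by $S\subseteq\{1,\dots,R\}$ with $|S|=t\leq \min(k_\textbf{A}+k_\textbf{B}-1,R)$ is linearly dependent, and (by restricting $S$ further if needed) assume every coefficient $c_i$ in the dependence is nonzero. Using the standard isomorphism between $\mathbb{R}^{JI}$ and $\mathbb{R}^{I\times J}$ that identifies $\textbf{a}\otimes \textbf{b}$ with the rank-one matrix $\textbf{b}\textbf{a}^\top$, the relation $\sum_{i\in S}c_i(\textbf{a}_i\otimes \textbf{b}_i)=\textbf{0}$ is equivalent to
\begin{equation*}
\textbf{B}_S\, \textbf{D}\, \textbf{A}_S^\top=\textbf{0},
\end{equation*}
where $\textbf{A}_S$ and $\textbf{B}_S$ are the column-submatrices indexed by $S$ and $\textbf{D}=\mathrm{diag}((c_i)_{i\in S})$ is invertible.

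The key tool is Sylvester's rank inequality applied to this triple product: $\mathrm{rank}(\textbf{B}_S\textbf{D}\textbf{A}_S^\top)\geq \mathrm{rank}(\textbf{B}_S\textbf{D})+\mathrm{rank}(\textbf{A}_S^\top)-t$, where $t$ is the inner dimension. Since $\textbf{D}$ is invertible, $\mathrm{rank}(\textbf{B}_S\textbf{D})=\mathrm{rank}(\textbf{B}_S)$, and by the definition of $k$-rank, $\mathrm{rank}(\textbf{A}_S)\geq \min(t,k_\textbf{A})$ and $\mathrm{rank}(\textbf{B}_S)\geq \min(t,k_\textbf{B})$. I would then do a short case analysis: if $t\leq k_\textbf{A}$ and $t\leq k_\textbf{B}$ the lower bound is $t\geq 1$; if exactly one of the two inequalities is violated the bound is $k_\textbf{A}\geq 1$ or $k_\textbf{B}\geq 1$; and if both are violated the bound becomes $k_\textbf{A}+k_\textbf{B}-t$, which is still $\geq 1$ precisely because of the hypothesis $t\leq k_\textbf{A}+k_\textbf{B}-1$. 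In every case the product has rank at least $1$, contradicting the assumption that it equals the zero matrix.

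The main obstacle is really just bookkeeping in the case analysis, together with fixing the correct identification between $\textbf{a}\otimes\textbf{b}$ and a rank-one matrix (it is $\textbf{b}\textbf{a}^\top$ under the column-stacking vectorization convention, not $\textbf{a}\textbf{b}^\top$); getting this backward would misidentify which factor contributes $k_\textbf{A}$ versus $k_\textbf{B}$ to the Sylvester bound, but the final inequality is symmetric in $\textbf{A}$ and $\textbf{B}$ so the conclusion is unaffected. The hypothesis $k_\textbf{A},k_\textbf{B}\geq 1$ is used precisely to ensure the lower bound is strictly positive in the mixed cases; without it the $k$-rank inequality in \cref{lem:7.2} could fail when one of the factor matrices has a zero column.
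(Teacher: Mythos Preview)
Your argument is correct and self-contained. Note, however, that in the paper this lemma is not proved at all: it is simply quoted from Sidiropoulos et al.\ and Stegeman--Sidiropoulos (references \cite{852018,STEGEMAN2007540} in the paper) as an external result, and then used as a black box to derive \cref{pro:a3} and ultimately \cref{pro:2.11}. So your proposal goes beyond what the paper itself does.

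As a brief comparison with the original source: the classical proof in Sidiropoulos--Bro proceeds essentially along the same lines you describe, rewriting a putative linear dependence among the Khatri--Rao columns as a matrix identity $\textbf{B}_S\textbf{D}\textbf{A}_S^\top=\textbf{0}$ and then invoking a rank count. Your use of Sylvester's inequality packages the case analysis cleanly; the only place one must be careful (and you flagged it) is that $k_\textbf{A},k_\textbf{B}\geq 1$ is genuinely needed, since a zero column in either factor produces a zero Khatri--Rao column and kills the $k$-rank bound.
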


\begin{proposition}\label{pro:a3}
Given matrices $\textbf{A}^{(n)}\in\mathbb{R}^{J_n\times R}$, the Khatri-Rao product $\textbf{A}^{(1)}\odot\textbf{A}^{(2)}\odot\dots\odot \textbf{A}^{(N)}$ has column rank $R$ if
$
\sum_{n=1}^N k_{\textbf{A}^{(n)}} \geq R+N-1
$
for $k_{\textbf{A}^{(n)}}\geq 1$.
\end{proposition}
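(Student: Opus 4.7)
The plan is to prove the statement by induction on $N$, using \cref{lem:7.2} as the base case and inductive lever. The base case $N=2$ is exactly the first assertion of \cref{lem:7.2}, so no work is required there.

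For the inductive step, suppose the claim holds for $N-1$ matrices. Given $N$ matrices $\textbf{A}^{(1)},\dots,\textbf{A}^{(N)}$ with $\sum_{n=1}^N k_{\textbf{A}^{(n)}}\geq R+N-1$, define $\textbf{B}=\textbf{A}^{(1)}\odot\cdots\odot\textbf{A}^{(N-1)}$ and aim to apply the first part of \cref{lem:7.2} to the pair $(\textbf{B},\textbf{A}^{(N)})$. The critical ingredient is a lower bound on $k_{\textbf{B}}$, which I plan to obtain by iterating the $k$-rank inequality $k_{\textbf{X}\odot\textbf{Y}}\geq \min\{k_{\textbf{X}}+k_{\textbf{Y}}-1,R\}$ of \cref{lem:7.2} exactly $N-2$ times. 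This yields
\begin{equation*}
k_{\textbf{B}}\;\geq\;\min\!\Bigl\{\sum_{n=1}^{N-1}k_{\textbf{A}^{(n)}}-(N-2),\;R\Bigr\}.
\end{equation*}

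Next, I would split into two cases according to which term attains the minimum. In the case $\sum_{n=1}^{N-1}k_{\textbf{A}^{(n)}}-(N-2)\geq R$, the bound gives $k_{\textbf{B}}\geq R$, hence $k_{\textbf{B}}=R$ since $\textbf{B}$ has only $R$ columns; together with $k_{\textbf{A}^{(N)}}\geq 1$ this yields $k_{\textbf{B}}+k_{\textbf{A}^{(N)}}\geq R+1$. In the complementary case, one directly computes
\begin{equation*}
k_{\textbf{B}}+k_{\textbf{A}^{(N)}}\;\geq\;\sum_{n=1}^{N}k_{\textbf{A}^{(n)}}-(N-2)\;\geq\;(R+N-1)-(N-2)\;=\;R+1.
\end{equation*}
Either way, the hypothesis of the first part of \cref{lem:7.2} is satisfied (after observing that $k_{\textbf{B}}\geq 1$ since each $k_{\textbf{A}^{(n)}}\geq 1$ forces $\sum_{n=1}^{N-1}k_{\textbf{A}^{(n)}}-(N-2)\geq 1$), and therefore $\textbf{B}\odot\textbf{A}^{(N)}=\textbf{A}^{(1)}\odot\cdots\odot\textbf{A}^{(N)}$ has column rank $R$.

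The only delicate point I anticipate is the bookkeeping inside the iterated $k$-rank bound: one must be careful that the inductive application of $k_{\textbf{X}\odot\textbf{Y}}\geq\min\{k_{\textbf{X}}+k_{\textbf{Y}}-1,R\}$ accumulates the correct offset $-(N-2)$ and that the $\min$ with $R$ is preserved at each step. Beyond that, the argument is a clean two-level induction (outer induction establishing column rank $R$, inner iteration establishing the $k$-rank lower bound), and no new tensor-algebra machinery beyond \cref{lem:7.2} is required.
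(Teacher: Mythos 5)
Your proof is correct and takes essentially the same approach as the paper: split off the last factor, bound $k_{\textbf{A}^{(1)}\odot\cdots\odot\textbf{A}^{(N-1)}}$ by iterating the $k$-rank inequality of \cref{lem:7.2}, and split into cases according to which term attains the minimum. The only difference is that the paper works out $N=3$ explicitly and asserts the general case follows "by the same approach," whereas you carry out the induction in full, which is a slight improvement in rigor but not a different argument.
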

\begin{proof}
Suppose that $N=3$. By \cref{lem:7.2}, the Khatri-Rao product $\textbf{A}^{(1)}\odot\textbf{A}^{(2)}\odot \textbf{A}^{(3)}$ has full column rank $R$ if
$
k_{\textbf{A}^{(1)}\odot\textbf{A}^{(2)}} + k_{\textbf{A}^{(3)}} \geq R+1.
$
Since we know that $k_{\textbf{A}\odot\textbf{B}} \geq \min{\{k_{\textbf{A}}+k_{\textbf{B}}-1,R\}}$, the above inequality can be satisfied if
\begin{equation*}
\min{\{k_{\textbf{A}^{(1)}}+k_{\textbf{A}^{(2)}}-1,R\}} + k_{\textbf{A}^{(3)}} \geq R+1.
\end{equation*}
When $k_{\textbf{A}^{(1)}}+k_{\textbf{A}^{(2)}}> R+1$, the condition is reduced to $k_{\textbf{A}^{(3)}}\geq 1$, and when $k_{\textbf{A}^{(1)}}+k_{\textbf{A}^{(2)}}\leq R+1$, the condition becomes $k_{\textbf{A}^{(1)}}+k_{\textbf{A}^{(2)}} + k_{\textbf{A}^{(3)}} \geq R+2$. Therefore, the Khatri-Rao product $\textbf{A}^{(1)}\odot\textbf{A}^{(2)}\odot \textbf{A}^{(3)}$ has full column rank $R$ if
$
k_{\textbf{A}^{(1)}}+k_{\textbf{A}^{(2)}} + k_{\textbf{A}^{(3)}} \geq R+2.
$
The result can be easily extended to $n=N$ using the same approach.
\end{proof}

Now, we can prove \cref{pro:2.11}. Suppose that $\textsf{A}$ has the CPD format (\ref{eq:8}) with CP rank equal to $R$. Applying the unfolding transformation $\varphi$ yields
\begin{equation*}
\varphi(\textsf{A}) = (\textbf{A}^{(2N-1)}\odot\dots\odot \textbf{A}^{(1)}) \textbf{S} (\textbf{A}^{(2N)}\odot \dots \odot \textbf{A}^{(2)})^\top,
\end{equation*}
where, $\textbf{S}\in\mathbb{R}^{R\times R}$ is a diagonal matrix containing the weights of the CPD on its diagonal. By \cref{pro:a3}, the two Khatri-Rao products $\textbf{A}^{(2N-1)}\odot\dots\odot \textbf{A}^{(1)}$ and $\textbf{A}^{(2N)}\odot\dots \odot \textbf{A}^{(2)}$ have full column rank $R$ if the two conditions
$\sum_{n=1:2}^{2N} k_{\textbf{A}^{(n)}} \geq R + N -1 \text{, and }
\sum_{n=2:2}^{2N} k_{\textbf{A}^{(n)}} \geq R + N - 1
$
are satisfied. Hence, $\text{rank}_U(\textsf{A})= R$. Note that we do not require the CPD of $\textsf{A}$ to be unique in the statement.

\subsection{Proof of \cref{cor:5.3}}\label{app:2.3}
The proof is formulated similarly to the one above. We need to use the properties of Khatri-Rao product.
\begin{lemma}\label{lem:b2}
Given matrices $\textbf{A}^{(n)}\in\mathbb{R}^{J_n\times R}$, the Khatri-Rao product $\textbf{A}^{(1)}\odot\textbf{A}^{(2)}\odot\dots\odot \textbf{A}^{(N)}$ has all the column vectors orthogonal if at least one of $\textbf{A}^{(n)}$ has all the column vectors orthogonal for $n=1,2,\dots,N$.
\end{lemma}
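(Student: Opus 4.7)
The plan is to reduce the claim to the well-known multiplicativity of inner products under the Kronecker product. By definition of the Khatri-Rao product, the $i$-th column of $\textbf{A}^{(1)}\odot\textbf{A}^{(2)}\odot\cdots\odot\textbf{A}^{(N)}$ is the Kronecker product $\textbf{a}^{(1)}_i\otimes\textbf{a}^{(2)}_i\otimes\cdots\otimes\textbf{a}^{(N)}_i$, where $\textbf{a}^{(n)}_i$ denotes the $i$-th column of $\textbf{A}^{(n)}$. So orthogonality of all columns amounts to showing that for any $i\neq j$ the inner product of the two associated Kronecker products vanishes.

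The key identity I would invoke is the standard fact that $(\textbf{u}\otimes\textbf{v})^\top(\textbf{x}\otimes\textbf{y}) = (\textbf{u}^\top\textbf{x})(\textbf{v}^\top\textbf{y})$, which extends by a trivial induction on $N$ to
\begin{equation*}
\bigl(\textbf{a}^{(1)}_i\otimes\cdots\otimes\textbf{a}^{(N)}_i\bigr)^\top \bigl(\textbf{a}^{(1)}_j\otimes\cdots\otimes\textbf{a}^{(N)}_j\bigr) = \prod_{n=1}^N \bigl(\textbf{a}^{(n)}_i\bigr)^\top \textbf{a}^{(n)}_j.
\end{equation*}
By hypothesis, there exists some index $n_0$ for which $\textbf{A}^{(n_0)}$ has mutually orthogonal columns, hence $\bigl(\textbf{a}^{(n_0)}_i\bigr)^\top\textbf{a}^{(n_0)}_j=0$ whenever $i\neq j$. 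That single vanishing factor kills the whole product, so the $i$-th and $j$-th columns of the Khatri-Rao product are orthogonal, which is exactly the claim.

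There is no real obstacle here; the only thing to be careful about is that one does not need \emph{all} factors to have orthogonal columns, just one, because a single zero in the product above suffices. I would state the inductive identity as a small preliminary observation (or cite it as a standard Kronecker-product property) and then the proof collapses to a single line.
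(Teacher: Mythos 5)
Your proposal is correct and follows essentially the same route as the paper: both reduce the claim to the mixed-product identity $(\textbf{u}\otimes\textbf{v})^\top(\textbf{x}\otimes\textbf{y})=(\textbf{u}^\top\textbf{x})(\textbf{v}^\top\textbf{y})$, so that a single orthogonal factor matrix annihilates each off-diagonal inner product. The only cosmetic difference is that you spell out the induction to general $N$, whereas the paper writes the argument for $N=2$ and leaves the extension implicit.
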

\begin{proof}
Suppose that $N=2$. Based on the properties of Kronecker product, for any $1\leq n,m\leq R$, the inner product between $\textbf{a}_n^{(1)}\otimes \textbf{a}_n^{(2)}$ and $\textbf{a}_m^{(1)}\otimes \textbf{a}_m^{(2)}$ is given by
\begin{equation*}
(\textbf{a}_n^{(1)}\otimes \textbf{a}_n^{(2)})^\top (\textbf{a}_m^{(1)}\otimes \textbf{a}_m^{(2)}) = ((\textbf{a}_n^{(1)})^\top \textbf{a}_m^{(1)}))\otimes ((\textbf{a}_n^{(2)})^\top \textbf{a}_m^{(2)})).
\end{equation*}
Therefore, if $\textbf{A}^{(1)}$ or $\textbf{A}^{(2)}$ has all column vectors orthogonal, then the inner product between $\textbf{a}_n^{(1)}\otimes \textbf{a}_n^{(2)}$ and $\textbf{a}_m^{(1)}\otimes \textbf{a}_m^{(2)}$ is zero for any $n,m$. 
\end{proof}

Now we can prove \cref{cor:5.3}. Suppose that $\textsf{A}$ has the CPD format (\ref{eq:8}). Applying the unfolding transformation $\varphi$ yields
\begin{equation*}
\varphi(\textsf{A}) = (\textbf{A}^{(2N-1)}\odot\dots\odot \textbf{A}^{(1)}) \textbf{S} (\textbf{A}^{(2N)}\odot \dots \odot \textbf{A}^{(2)})^\top,
\end{equation*}
where, $\textbf{S}\in\mathbb{R}^{R\times R}$ is a diagonal matrix containing the weights of the CPD on its diagonal. By \cref{lem:b2}, the two Khatri-Rao products $\textbf{A}^{(2N-1)}\odot\dots\odot \textbf{A}^{(1)}$ and $\textbf{A}^{(2N)}\odot\dots \odot \textbf{A}^{(2)}$ have all the column vectors orthonormal if $\textbf{A}^{(n)}$ and $\textbf{A}^{(m)}$ have all the column vectors orthonormal for at least one odd $n$ and even $m$. Thus, $\lambda_1$ will be the largest singular value of $\varphi(\textsf{A})$. In addition, we know that the magnitude of the maximal eigenvalue of a matrix is less than or equal to its largest singular value. Hence, the proof follows immediately from \cref{pro:3.5}. Note that there is one special case when the CPD uniqueness condition fails, i.e. $\sum_{n=1}^{2N}k_{\textbf{A}^{(n)}}=2R+2N-2$. However, different CPDs, satisfying the orthonormal condition, correspond to the same matrix SVD under $\varphi$ up to some orthogonal transformations.

\section{Numerical algorithms}\label{app:4}

\begin{algorithm}[h!]
\caption{Generalized TTD}
\label{alg:5.4}
\begin{algorithmic}[1]
\STATE{Given an even-order paired tensors $\textsf{A}\in\mathbb{R}^{J_1\times I_1\times \dots \times J_N\times I_N}$}
\STATE{Set $\check{\textsf{A}} = \texttt{reshape}(\textsf{A},J_1I_1,J_2I_2,\dots,J_NI_N)$}
\STATE{Apply the standard TTD algorithm on $\check{\textsf{A}}$} such that 
\begin{equation*}
\check{\textsf{A}} = \sum_{r_0=1}^{R_0}\dots\sum_{r_N=1}^{R_N} \check{\textsf{A}}_{r_0:r_1}^{(1)}\circ \check{\textsf{A}}_{r_1:r_2}^{(2)}\circ\dots \circ\check{\textsf{A}}_{r_{N-1}:r_N}^{(N)}
\end{equation*}
\STATE{Set $\textsf{A}_{r_{n-1}::r_n}^{(n)} = \texttt{reshape}(\check{\textsf{A}}_{r_{n-1}:r_n}^{(n)},J_n,I_n)$} for $n=1,2,\dots,N$
\RETURN Component tensors $\textsf{A}^{(n)}$ for $n=1,2,\dots,N$.
\end{algorithmic}
\end{algorithm}

\begin{algorithm}[h!]
\caption{Higher-Order Rayleigh Quotient Iteration}
\label{alg:5.2}
\begin{algorithmic}[1]
\STATE{Given an even-order square tensor $\textsf{A}\in\mathbb{R}^{J_1\times J_1\times \dots \times J_N\times J_N}$}\\
\STATE{Initialize $\textsf{X}_0\in\mathbb{R}^{J_1\times J_2\times \dots \times J_N}$ with $\|\textsf{X}_0\|=1$} \\
\STATE{Compute $\lambda_0=\textsf{X}_0^\top*\textsf{A}*\textsf{X}_0$}\\
\FOR{$k=1,2,\dots$}
\STATE{Solve $(\textsf{A}-\lambda_{k-1}\textsf{I})*\textsf{Y} = \textsf{X}_{k-1}$ using HOBG proposed in \cite{doi:10.1137/100804577}}\\
\STATE{Set $\textsf{X}_k = \frac{\textsf{Y}}{\|\textsf{Y}\|}$}\\
\STATE{Compute $\lambda_k = \textsf{X}_k^\top*\textsf{A}*\textsf{X}_k$}
\ENDFOR
\RETURN U-eigenvalue $\lambda$ and U-eigentensor $\textsf{X}$.
\end{algorithmic}
\end{algorithm}

\vspace{1cm}
\section{MATLAB functions}\label{app:5}
\subsection{The colon operator} \label{app:5.1}
The colon : is one of the most useful operators in MATLAB, which can create vectors, subscript arrays and specify for iterations. For our purpose, it acts as shorthand to include all subscripts in a particular array dimension \cite{MATLAB:2018}. For example, $\textbf{A}_{:i}$ is equivalent to $\textbf{A}_{ji}$ for all $j$. In the following, we represent TTD, generalized CPD and TTD in the component-wise form. 
\begin{enumerate}
\item (\ref{train}) $\Leftrightarrow \textsf{X}_{j_1j_2\dots j_N} = \sum_{r_0=1}^{R_0}\dots \sum_{r_N=1}^{R_N}\textsf{X}^{(1)}_{r_0j_1r_1}  \textsf{X}^{(2)}_{r_1j_2r_2} \dots  \textsf{X}^{(N)}_{r_{N-1}j_Nr_N}$.
\item (\ref{eq:4.15}) $\Leftrightarrow \textsf{A}_{j_1i_1\dots j_Ni_N}=\sum_{r=1}^R\textsf{A}_{rj_1i_1}^{(1)} \textsf{A}_{rj_2i_2}^{(2)} \dots  \textsf{A}_{rj_Ni_N}^{(N)}$.
\item (\ref{eq:gttd}) $\Leftrightarrow\textsf{A}_{j_1i_1\dots j_Ni_N}=\sum_{r_0=1}^{R_0}\dots \sum_{r_N=1}^{R_N}\textsf{A}^{(1)}_{r_0j_1i_1r_1} \textsf{A}^{(2)}_{r_1j_2i_2r_2} \dots  \textsf{A}^{(N)}_{r_{N-1}j_Ni_Nr_N}$.
\end{enumerate}

\subsection{The \texttt{reshape} operator}\label{app:5.2}
The command $\textsf{B}=\texttt{reshape}(\textsf{A},J_1,J_2,\dots,J_N)$ reshapes a tensor \textsf{A} into a $J_1\times J_2\times \dots \times J_N$ order tensor such that the number of elements in \textsf{B} matches the number of elements in \textsf{A} \cite{MATLAB:2018}.

\section*{Acknowledgments} We would like to thank Dr. Frederick Leve at the Air Force Office of Scientific Research (AFOSR) for support and encouragement. We would also like to thank the two referees for their constructive comments, which lead to a significant improvement of the paper. 

\bibliographystyle{siamplain}
\bibliography{reference}

\begin{thebibliography}{10}

\bibitem{5447070}
{\sc G.~Bergqvist and E.~G. Larsson}, {\em The higher-order singular value
  decomposition: Theory and an application [lecture notes]}, IEEE Signal
  Processing Magazine, 27 (2010), pp.~151--154,
  \url{https://doi.org/10.1109/MSP.2010.936030}.

\bibitem{doi:10.1137/100804577}
{\sc M.~J. Brazell, N.~Li, C.~Navasca, and C.~Tamon}, {\em Solving multilinear
  systems via tensor inversion}, SIAM Journal on Matrix Analysis Applications,
  34 (2013), pp.~542--570, \url{https://doi.org/10.1137/100804577}.

\bibitem{doi:10.1137/1.9781611973884}
{\sc R.~Brockett}, {\em Finite Dimensional Linear Systems}, Society for
  Industrial and Applied Mathematics, Philadelphia, PA, 2015,
  \url{https://doi.org/10.1137/1.9781611973884}.

\bibitem{thekroneckerproduct}
{\sc B.~J. Broxson}, {\em The kronecker product}, UNF Theses and Dissertations,
   (2006), \url{http://digitalcommons.unf.edu/etd/25}.

\bibitem{Chen_2018}
{\sc C.~Chen, A.~Surana, A.~Bloch, and I.~Rajapakse}, {\em Multilinear time
  invariant system theory}, in 2019 Proceedings of SIAM Conference on Control
  and its Applications, pp.~118--125,
  \url{https://doi.org/10.1137/1.9781611975758.18}.

\bibitem{chen_2019}
{\sc C.~Chen, A.~Surana, A.~Bloch, and I.~Rajapakse}, {\em Data-driven model
  reduction for multilinear control systems via tensor trains}, 2019,
  \url{https://arxiv.org/abs/1912.03569}.

\bibitem{Chen_2015}
{\sc H.~Chen, J.~Chen, L.~A. Muir, S.~Ronquist, W.~Meixner, M.~Ljungman,
  T.~Ried, S.~Smale, and I.~Rajapakse}, {\em Functional organization of the
  human 4d nucleome}, Proceedings of the National Academy of Sciences, 112
  (2015), pp.~8002--8007, \url{https://doi.org/10.1073/pnas.1505822112}.

\bibitem{doi:10.1080/03081087.2015.1071311}
{\sc L.-B. Cui, C.~Chen, W.~Li, and M.~K. Ng}, {\em An eigenvalue problem for
  even order tensors with its applications}, Linear and Multilinear Algebra, 64
  (2016), pp.~602--621, \url{https://doi.org/10.1080/03081087.2015.1071311}.

\bibitem{doi:10.1137/S0895479896305696}
{\sc L.~De~Lathauwer, B.~De~Moor, and J.~Vandewalle}, {\em A multilinear
  singular value decomposition}, SIAM Journal on Matrix Analysis and
  Applications, 21 (2000), pp.~1253--1278,
  \url{https://doi.org/10.1137/S0895479896305696}.

\bibitem{doi:10.1137/06066518X}
{\sc V.~de~Silva and L.~Lim}, {\em Tensor rank and the ill-posedness of the
  best low-rank approximation problem}, SIAM Journal on Matrix Analysis and
  Applications, 30 (2008), pp.~1084--1127,
  \url{https://doi.org/10.1137/06066518X}.

\bibitem{DING201875}
{\sc W.~Ding, K.~Liu, E.~Belyaev, and F.~Cheng}, {\em Tensor-based linear
  dynamical systems for action recognition from 3d skeletons}, Pattern
  Recognition, 77 (2018), pp.~75--86,
  \url{https://doi.org/10.1016/j.patcog.2017.12.004}.

\bibitem{DOLGIN2005771}
{\sc Y.~Dolgin and E.~Zeheb}, {\em Model reduction of uncertain systems
  retaining the uncertainty structure}, Systems \& Control Letters, 54 (2005),
  pp.~771--779, \url{https://doi.org/10.1016/j.sysconle.2004.10.010}.

\bibitem{DOLGOV20141207}
{\sc S.~Dolgov, B.~Khoromskij, I.~Oseledets, and D.~Savostyanov}, {\em
  Computation of extreme eigenvalues in higher dimensions using block tensor
  train format}, Computer Physics Communications, 185 (2014), pp.~1207--1216,
  \url{https://doi.org/10.1016/j.cpc.2013.12.017}.

\bibitem{Einstein_2007}
{\sc A.~Einstein}, {\em The foundation of the general theory of relativity},
  Princeton University Press, Princeton, NJ, 2007.

\bibitem{fortuna2012model}
{\sc L.~Fortuna, G.~Nunnari, and A.~Gallo}, {\em Model Order Reduction
  Techniques with Applications in Electrical Engineering}, Springer London,
  2012, \url{https://books.google.com/books?id=hN7UBwAAQBAJ}.

\bibitem{gel2017tensor}
{\sc P.~Gel{\upshape{\ss}}}, {\em The Tensor-train Format and Its Applications:
  Modeling and Analysis of Chemical Reaction Networks, Catalytic Processes,
  Fluid Flows, and Brownian Dynamics}, Freie Universit{\"a}t Berlin, 2017,
  \url{https://books.google.com/books?id=Fn8ytAEACAAJ}.

\bibitem{He_2017}
{\sc Z.-H. He, C.~Navasca, and Q.-W. Wang}, {\em Tensor decompositions and
  tensor equations over quaternion algebra},  (2017),
  \url{https://arxiv.org/abs/1710.07552}.

\bibitem{Huang_2017}
{\sc Z.~Huang and L.~Qi}, {\em Positive definiteness of paired symmetric
  tensors and elasticity tensors}, Journal of Computational and Applied
  Mathematics, 338 (2017), \url{https://doi.org/10.1016/j.cam.2018.01.025}.

\bibitem{doi:10.1002/nla.2086}
{\sc B.~Jiang, F.~Yang, and S.~Zhang}, {\em Tensor and its tucker core: The
  invariance relationships}, Numerical Linear Algebra with Applications, 24
  (2017), p.~e2086, \url{https://doi.org/10.1002/nla.2086}.

\bibitem{Kailath_1980}
{\sc T.~Kailath}, {\em Linear Systems}, Information and System Sciences Series,
  Prentice-Hall, 1980, \url{https://books.google.com/books?id=ggYqAQAAMAAJ}.

\bibitem{tensorSC}
{\sc B.~Khoromskij}, {\em Tensor Numerical Methods in Scientific Computing}, 06
  2018, \url{https://doi.org/10.1515/9783110365917}.

\bibitem{Klus_2018}
{\sc S.~Klus, P.~Gel{\upshape{\ss}}, S.~Peitz, and C.~Schütte}, {\em
  Tensor-based dynamic mode decomposition}, Nonlinearity, 31 (2018),
  pp.~3359--3380, \url{https://doi.org/10.1088/1361-6544/aabc8f}.

\bibitem{doi:10.1137/07070111X}
{\sc T.~Kolda and B.~Bader}, {\em Tensor decompositions and applications}, SIAM
  Review, 51 (2009), pp.~455--500, \url{https://doi.org/10.1137/07070111X}.

\bibitem{Kolda06multilinearoperators}
{\sc T.~G. Kolda}, {\em Multilinear operators for higher-order decompositions},
   (2006).

\bibitem{KRUPPA20175610}
{\sc K.~Kruppa}, {\em Comparison of tensor decomposition methods for simulation
  of multilinear time-invariant systems with the mti toolbox},
  IFAC-PapersOnLine, 50 (2017), pp.~5610--5615,
  \url{https://doi.org/https://doi.org/10.1016/j.ifacol.2017.08.1107}.
\newblock 20th IFAC World Congress.

\bibitem{KRUSKAL197795}
{\sc J.~B. Kruskal}, {\em Three-way arrays: rank and uniqueness of trilinear
  decompositions, with application to arithmetic complexity and statistics},
  Linear Algebra and its Applications, 18 (1977), pp.~95--138,
  \url{https://doi.org/10.1016/0024-3795(77)90069-6}.

\bibitem{Lai_2009}
{\sc W.~Lai, D.~Rubin, E.~Krempl, and D.~Rubin}, {\em Introduction to Continuum
  Mechanics}, Elsevier Science, 2009,
  \url{https://books.google.com/books?id=lEhh-hjG6EgC}.

\bibitem{singularvaluetensor}
{\sc {Lek-Heng Lim}}, {\em Singular values and eigenvalues of tensors: a
  variational approach}, in 1st IEEE International Workshop on Computational
  Advances in Multi-Sensor Adaptive Processing, 2005, pp.~129--132.

\bibitem{doi:10.1080/03081087.2018.1500993}
{\sc M.~Liang, B.~Zheng, and R.~Zhao}, {\em Tensor inversion and its
  application to the tensor equations with einstein product}, Linear and
  Multilinear Algebra, 67 (2019), pp.~843--870,
  \url{https://doi.org/10.1080/03081087.2018.1500993}.

\bibitem{LIU2018}
{\sc S.~Liu, H.~Chen, S.~Ronquist, L.~Seaman, N.~Ceglia, W.~Meixner, P.-Y.
  Chen, G.~Higgins, P.~Baldi, S.~Smale, A.~Hero, L.~A. Muir, and I.~Rajapakse},
  {\em Genome architecture mediates transcriptional control of human myogenic
  reprogramming}, iScience, 6 (2018), pp.~232--246,
  \url{https://doi.org/https://doi.org/10.1016/j.isci.2018.08.002}.

\bibitem{MATLAB:2018}
{\sc MATLAB}, {\em 9.7.0.1190202 (R2019b)}, The MathWorks Inc., Natick,
  Massachusetts, 2018.

\bibitem{Igor}
{\sc I.~Mezic}, {\em Spectral properties of dynamical systems, model reduction
  and decompositions}, Nonlinear Dynamics, 41 (2005), pp.~309--325,
  \url{https://doi.org/10.1007/s11071-005-2824-x}.

\bibitem{obinata2012model}
{\sc G.~Obinata and B.~Anderson}, {\em Model Reduction for Control System
  Design}, Communications and Control Engineering, Springer London, 2012,
  \url{https://books.google.com/books?id=Ij\_TBwAAQBAJ}.

\bibitem{doi:10.1137/090752286}
{\sc I.~Oseledets}, {\em Tensor-train decomposition}, SIAM Journal on
  Scientific Computing, 33 (2011), pp.~2295--2317,
  \url{https://doi.org/10.1137/090752286}.

\bibitem{tttoolbox}
{\sc I.~Oseledets, S.~Dolgov, V.~Kazeev, O.~Lebedeva, and T.~Mach}, {\em
  Tt-toolbox}, 2014, \url{https://github.com/oseledets/TT-Toolbox} (accessed
  2014/06/16).
\newblock Version 2.2.2.

\bibitem{doi:10.1137/090748330}
{\sc I.~Oseledets and E.~Tyrtyshnikov}, {\em Breaking the curse of
  dimensionality, or how to use svd in many dimensions}, SIAM Journal on
  Scientific Computing, 31 (2009), pp.~3744--3759,
  \url{https://doi.org/10.1137/090748330}.

\bibitem{QI20051302}
{\sc L.~Qi}, {\em Eigenvalues of a real supersymmetric tensor}, Journal of
  Symbolic Computation, 40 (2005), pp.~1302--1324,
  \url{https://doi.org/10.1016/j.jsc.2005.05.007}.

\bibitem{Liqun2009ConditionsFS}
{\sc L.~Qi, H.-H. Dai, and D.~Han}, {\em Conditions for strong ellipticity and
  m-eigenvalues}, 2009.

\bibitem{doi:10.1137/110820609}
{\sc S.~Ragnarsson and C.~Van~Loan}, {\em Block tensor unfoldings}, SIAM J.
  Matrix Analysis Applications, 33 (2012), pp.~149--169,
  \url{https://doi.org/10.1137/110820609}.

\bibitem{RAGNARSSON2013853}
{\sc S.~Ragnarsson and C.~Van~Loan}, {\em Block tensors and symmetric
  embeddings}, Linear Algebra and its Applications, 438 (2013), pp.~853--874,
  \url{https://doi.org/10.1016/j.laa.2011.04.014}.
\newblock Tensors and Multilinear Algebra.

\bibitem{Rajapakse711}
{\sc I.~Rajapakse and M.~Groudine}, {\em On emerging nuclear order}, The
  Journal of Cell Biology, 192 (2011), pp.~711--721,
  \url{https://doi.org/10.1083/jcb.201010129}.

\bibitem{rogers_2013}
{\sc M.~Rogers, L.~Li, and S.~J. Russell}, {\em Multilinear dynamical systems
  for tensor time series}, in Advances in Neural Information Processing Systems
  26, Curran Associates, Inc., 2013, pp.~2634--2642,
  \url{http://papers.nips.cc/paper/5117-multilinear-dynamical-systems-for-tensor-time-series.pdf}.

\bibitem{Ronquist11832}
{\sc S.~Ronquist, G.~Patterson, L.~A. Muir, S.~Lindsly, H.~Chen, M.~Brown,
  M.~S. Wicha, A.~Bloch, R.~Brockett, and I.~Rajapakse}, {\em Algorithm for
  cellular reprogramming}, Proceedings of the National Academy of Sciences, 114
  (2017), pp.~11832--11837, \url{https://doi.org/10.1073/pnas.1712350114}.

\bibitem{doi:10.1142/S0218127405012429}
{\sc C.~W. Rowley}, {\em Model reduction for fluids, using balanced proper
  orthogonal decomposition}, International Journal of Bifurcation and Chaos, 15
  (2005), pp.~997--1013, \url{https://doi.org/10.1142/S0218127405012429}.

\bibitem{Rugh:1996:LST:225486}
{\sc W.~J. Rugh}, {\em Linear System Theory (2Nd Ed.)}, Prentice-Hall, Inc.,
  Upper Saddle River, NJ, USA, 1996.

\bibitem{tensortoolbox}
{\sc {Sandia National Laboratories}}, {\em Matlab tensor toolbox}, 2015,
  \url{https://www.sandia.gov/~tgkolda/TensorToolbox/index-2.6.html} (accessed
  2015/02/06).
\newblock Version 2.6.

\bibitem{doi:10.1002/1099-128X}
{\sc N.~D. Sidiropoulos and R.~Bro}, {\em On the uniqueness of multilinear
  decomposition of n-way arrays}, Journal of Chemometrics, 14 (2000),
  pp.~229--239,
  \url{https://doi.org/10.1002/1099-128X(200005/06)14:3<229::AID-CEM587>3.0.CO;2-N}.

\bibitem{852018}
{\sc N.~D. Sidiropoulos, R.~Bro, and G.~B. Giannakis}, {\em Parallel factor
  analysis in sensor array processing}, IEEE Transactions on Signal Processing,
  48 (2000), pp.~2377--2388, \url{https://doi.org/10.1109/78.852018}.

\bibitem{STEGEMAN2007540}
{\sc A.~Stegeman and N.~D. Sidiropoulos}, {\em On kruskal's uniqueness
  condition for the candecomp/parafac decomposition}, Linear Algebra and its
  Applications, 420 (2007), pp.~540--552,
  \url{https://doi.org/10.1016/j.laa.2006.08.010}.

\bibitem{doi:10.1080/03081087.2015.1083933}
{\sc L.~Sun, B.~Zheng, C.~Bu, and Y.~Wei}, {\em Moore-penrose inverse of
  tensors via einstein product}, Linear and Multilinear Algebra, 64 (2016),
  pp.~686--698, \url{https://doi.org/10.1080/03081087.2015.1083933}.

\bibitem{7798500}
{\sc A.~Surana, G.~Patterson, and I.~Rajapakse}, {\em Dynamic tensor time
  series modeling and analysis}, in 2016 IEEE 55th Conference on Decision and
  Control (CDC), Dec 2016, pp.~1637--1642,
  \url{https://doi.org/10.1109/CDC.2016.7798500}.

\bibitem{loan_2016}
{\sc C.~F. Van~Loan}, {\em Structured Matrix Problems from Tensors}, vol.~2173,
  Springer, Cham, 01 2016, pp.~1--63,
  \url{https://doi.org/10.1007/978-3-319-49887-4_1}.

\end{thebibliography}

\end{document}